\newcommand{\NN}{{\mathbb N}} 
\newcommand{\Snj}{\scriptstyle} 
\DeclareMathOperator{\Gen}{Gen} 
\DeclareMathOperator{\Ht}{ht} 
\DeclareMathOperator{\Om}{\omega} 
\DeclareMathOperator{\DR}{\searrow} 
\DeclareMathOperator{\DL}{\swarrow} 
\newcommand{\K}{K} 
\newcommand{\B}{E} 
\newcommand{\Bbi}{\bm{E}} 
\newcommand{\m}{m} 
\newcommand{\x}{x} 
\newcommand{\y}{y} 
\newcommand{\sI}{\mathcal{I}} 
\newcommand{\cB}{\mathcal{B}} 
\newcommand{\F}{F} 
\newcommand{\seqnum}[1]{\href{http://oeis.org/#1}{\underline{#1}}} 
\newcommand{\beql}[1]{\begin{equation}\label{#1}} 
\newcommand{\eeq}{\end{equation}} 
\newcommand{\al}{\alpha} 
\newcommand{\la}{\lambda} 
\newcommand{\Floor}[1]{\left\lfloor#1\right\rfloor}
\newcommand{\Ceil}[1]{\left\lceil#1\right\rceil}
\newtheorem{thm}{Theorem}{\bfseries}{\itshape}
\newtheorem{cor}[thm]{Corollary}{\bfseries}{\itshape}
\newtheorem{lem}[thm]{Lemma}{\bfseries}{\itshape}
{\bfseries}{\itshape}
\newtheorem{conj}[thm]{Conjecture}{\bfseries}{\itshape}
\newlength{\commentlength}
\begin{document}
\theoremstyle{plain}

\begin{center}
{\large\bf On Kaprekar's Junction Numbers } \\
\vspace*{+.2in}

Max A. Alekseyev \\
The George Washington University, \\
Department of Mathematics, \\
Washington, DC 20052, USA \\
Email: maxal@gwu.edu \\

\vspace*{+.1in}

and \\

\vspace*{+.1in}

N. J. A. Sloane\footnote{Corresponding author.} \\
The OEIS Foundation, Inc., \\
11 So. Adelaide Ave., \\
Highland Park, NJ 08904, USA \\
Email: njasloane@gmail.com \\

\vspace*{+.2in}

{\bf Abstract}

\end{center}

A base $b$ junction number $u$ has the property that there are
at least two ways to write it as $u = v + s(v)$,
where $s(v)$ is the sum of the digits in the expansion
of the number $v$ in base $b$.
For the base $10$ case, Kaprekar in the 1950's and 1960's studied the
problem of finding $\K(n)$, the smallest $u$ such that
the equation $u=v+s(v)$ has exactly $n$ solutions.
He gave the values $\K(2)=101$, $\K(3)=10^{13}+1$,
and conjectured that $\K(4)=10^{24}+102$.
In 1966 Narasinga Rao gave the upper bound $10^{1111111111124}+102$
for $\K(5)$, as well as upper bounds for
$\K(6)$, $\K(7)$, $\K(8)$, and $\K(16)$.
In the present work, we derive a set of recurrences which determine $\K(n)$ for any base $b$  
and in particular imply that these conjectured values of $\K(n)$ are correct.
The key to our approach is an apparently new recurrence for $F(u)$, the number of solutions to $u=v+s(v)$.
We illustrate our method by computing the values of $\K(n)$ for $n\leq 16$ and bases $b \le 10$, and show that for each base $\K(n)$ grows as a tower of height proportional to $\log_2(n)$.
Rather surprisingly, the values of $\K(n)$ for the base $5$ problem are determined by the classical Thue--Morse sequence, which leads us to define generalized Thue--Morse sequences for other bases.


\noindent  \emph{Keywords: } 
Junction numbers, self-numbers, Colombian numbers, Kaprekar, 
Thue--Morse sequence.

\vspace{0.1in}
\noindent  2010 {\it Mathematics Subject Classification}:
Primary 11A63; Secondary: 11B37.



\section{Introduction}\label{Sec1}
For a fixed base $b \ge 2$, let $s(v)$ denote the
sum of the digits in the base $b$ expansion of 
$v \in \NN = \{0, 1, 2, \ldots\}$,
and let $f(v):=v+s(v)$.
Sequences that arise by iterating $f$ have a long history
\cite{Agr14,Sto76} (the latter 
reference has an extensive bibliography).
In the 1950's and 1960's, 
Dattaraya Ramchandra Kaprekar
in a series of self-published booklets~\cite{Kap51,Kap59,Kap62,Kap63,Kap67}
studied the inverse mapping to $f$ in the base $10$ case.
Let $\Gen(u) := \{ v \in \NN \mid f(v)=u \}$ and $F(u) := |\Gen(u)|$.
Kaprekar called the elements of $\Gen(u)$ the {\em generators} of $u$,
and in 1956~\cite{Kap56} defined a {\em self-number} to be any number $u$ with $F(u)=0$.
The first few self-numbers (in base $10$) are
\beql{EqSelf}
1,3,5,7,9,20,31,42,53,64,75,86,97,108,110,121,132,143,154,165,176, \ldots 
\eeq
(\seqnum{A003052}\label{A003052}).\footnote{Throughout this article,
six-digit numbers prefixed by A refer to entries in the OEIS \cite{OEIS}.}
Self-numbers are also known as {\em Colombian numbers},
after a problem proposed by Recam\'{a}n in 1973 \cite{Rec73}.

Kaprekar called numbers with at least two generators {\em junction
numbers}. The smallest junction number (again in base $10$) is $101$, which
has generators 91 and 100, and the first few junction numbers are
$$
101,103,105,107,109,111,113,115,117,202,204,206,208,210,212,214, \ldots
$$
(\seqnum{A230094}\label{A230094}).

Kaprekar was particularly interested in finding what we will call $\K(n)$,
the smallest number with $n$ generators,  which is the 
subject of the present paper.
We will show that
the sequence $\left(\K(n)\right)_{n\geq 1}$ begins
\begin{align}\label{EqKb10}
0, ~101, & ~10^{13}+1, ~10^{24}+102,
~10^{1111111111124} + 102,
~10^{2222222222224} + 10^{13}+2, \nonumber \\
& 10^{ (10^{24} + 10^{13} + 115) / 9 } + 10^{13} + 2,
~10^{ (2\cdot 10^{24} + 214)/9 } + 10^{24} + 103, \nonumber \\
& 10^{(10^{1111111111124}+10^{24}+214)/9}+10^{24}+103,  ~\ldots
\end{align}
(\seqnum{A006064}\label{A006064}).\footnote{The reader may detect a pattern in these 
numbers, but should be warned that it breaks down after a while. 
See Tables~\ref{TabK2} and \ref{TabK10c}.}
It is easy to check by hand that $\K(2)=101$,
and with today's computers it is easy to verify $\K(3)=10^{13}+1$ 
by direct search.
As to what was known by Kaprekar and his colleagues more than fifty
years ago, the various accounts
given by Kaprekar \cite{Kap62},
Narasinga Rao \cite{NRao66}, and
Gardner \cite{Gar88} do not quite agree, but it seems
that Kaprekar believed that he had proved that $\K(3)=10^{13}+1$,
that the putative value $10^{24}+102$ for $\K(4)$ was discovered 
independently by Kaprekar and Professor Gunjikar in 1961, and that Kaprekar
was convinced that it was the true value of $\K(4)$ and not just an upper bound.
Gardner \cite{Gar88} reports in 1975 that Kaprekar told him that he
had also found what he conjectured to be the values of $\K(5)$ and $\K(6)$.
Kaprekar's work on this problem is also discussed 
by Schorn \cite{Sch04}.

However, Narasinga Rao, writing in 1963 \cite{NRao66} (although not
published until 1966),
states things slightly differently. He gives a recipe for
finding junction numbers with a specified number of
generators, improving on an earlier recipe of Kaprekar's, 
and gives Kaprekar's value of $\K(3)=10^{13}+1$.
He then {\em conjectures} that $\K(4)=10^{24}+102$,
and gives as a candidate for $\K(5)$ the value
$10^{1111111111124}+102$, remarking that no much smaller 
value is likely to exist. (Narasinga Rao's recipe does not necessarily
produce the smallest junction number with a given number of generators.) 
He also gives upper bounds for $\K(6)$, $\K(7)$, $\K(8)$, and $\K(16)$.
Remarkably enough, all of Narasinga Rao's upper bounds turn out
to be the true values for these $\K(n)$.
We return to the base $10$ case in Section~\ref{SecB10}.


\

The principal goal of this paper is to present a set of recurrences
which generate the sequence $\left(\K(n)\right)_{n\geq 1}$ for any base $b$
(see Section~\ref{SecBb}, and in particular Theorems~\ref{ThBB} and~\ref{ThS2}).
These recurrences depend upon an apparently new recurrence
for $F(u)$, discussed in Section~\ref{SecRec}.

Because the values of $\K(n)$ for 
small $b$ and $n$ are both easy to determine
and somewhat exceptional,
we start by discussing the cases $n \le 3$ in
Section~\ref{SecPropK} and bases $b=2$ and $b=3$ in 
Section~\ref{SecB2}. 
Bases $2$ and $3$ are also exceptional since for them
the recurrences for $\K(n)$ are quite simple
and can be obtained without the machinery 
developed in Section~\ref{SecBb}.
Tables~\ref{TabK1} and \ref{TabK2} in Section~\ref{SecRec} 
collect the numerical values of $\K(n)$ for $n \le 7$
and bases $b \le 10$.

Section~\ref{SecBb} gives the recurrences for a general base $b$.
Sections~\ref{SecB5} and \ref{SecB10} apply the results
of Section~\ref{SecBb} to bases $b\in\{4, 5, 7, 10\}$.
In general, the calculation of $\K(n)$ involves a subsidiary sequence
$\left(\tau(n)\right)_{n \ge 1}$
of integers in the range $[0,b-2]$ when $b$ is even, or
$[0,\tfrac{b-3}{2}]$ when $b$ is odd.
For $b=5$, it turns out that $\left(\tau(n)\right)_{n \ge 1}$ is essentially
the classical Thue--Morse sequence\footnote{This provides yet another illustration of the ubiquity of this sequence~\cite{AS99}.} \seqnum{A010060}\label{A010060},
and so, for any base, we refer to $\left(\tau(n)\right)_{n \ge 1}$
as a ``generalized Thue--Morse sequence".
For example, for both bases $b=4$ and $b=7$ we obtain the ternary sequence
shown in~\eqref{EqTM4}, and for base $b=10$ the sequence~\eqref{EqTM10}.
 
Section~\ref{SecTow} (and in particular Conjecture~\ref{ConjTow})
discusses the rate of growth of $\K(n)$ as a function of $n$.
We begin by applying the recurrence for $F(u)$
from Section~\ref{SecRec} to establish some general bounds on $\K(n)$.
Then we consider the representation of $\K(n)$ as a tower of exponentials.
For bases $b \ne 3$, it appears that $\K(n)$ is a tower
\beql{EqTow10}
\K(n) ~=~ 
b^{ b^{\Snj \cdot ^{ \Snj \cdot ^{\Snj \cdot ^{\Snj b ^{\Snj \Om(n) }}}}}}\,,
\eeq
with $0 < \Om(n) \le 1$, of height\footnote{The height of a tower is defined in \eqref{EqTowU} in Section~\ref{SecTow}.} 
$\Ceil{\log_2(n)} + \la$,
where $\la = 3$ if $b=2$, $\la = 2$ if $b \ge 4$ is even, and
$\la = 1$ if $b \ge 5$ is odd (base $b=3$ is slightly exceptional).

\vspace{0.1in}
\noindent \textbf{Notation.} We will always work in a fixed base $b \ge 2$. 
If the base $b$ expansion of  $v \in \NN := \{0, 1, 2, \ldots\}$ is
\beql{Eq1_1}
v ~=~ \sum_{i=0}^{k-1} v_i b^i
\eeq
(where $0 \le v_i <b$, $k = \Ceil{\log_b (v+1)}$, 
and $v_{k-1} \ne 0$ unless $v=0$),
we refer to the $v_i$ as ``digits", even if $b \ne 10$,
and say that $v$ has ``length" $k$.
We will also write $v$ as
\beql{Eq1_2}
v ~=~ [v_{k-1}, v_{k-2}, \ldots, v_1, v_0] _b\,,
\eeq
where we omit the commas between the digits
if there is no possibility of confusion.
In this notation, $s(v) = \sum_{i=0}^{k-1} v_i$ and 
$f(v)=\sum_{i=0}^{k-1} v_i (b^i+1)$.

As already mentioned, 
for $u \in \NN$, we let $\Gen(u) := \{ v \in \NN \mid f(v)=u \}$, 
$F(u) := |\Gen(u)\}|$, and for $u<0$ we set
$\Gen(u) := \emptyset$ (the empty set) and $F(u):=0$.
For $n \ge 1$, $\K(n)$ is defined to be the smallest 
$u \in \NN$ such that $F(u)=n$.
For $n \ge 2$, it will turn out that the leading base-$b$ digit 
of $\K(n)$ is $1$ (see Theorem~\ref{Th131_1}),
and we define $\B(n)$ for $n \ge 2$ by 
$\K(n) = b^{\B(n)} +$  terms of smaller order 
(in other words, $\B(n)+1$ is the length of $\K(n)$ in base $b$).
In Tables~\ref{TabK1} and~\ref{TabK2} we write $\B_b(n)$ to
indicate the value of $b$. 
For use in Section~\ref{SecBb},
in Section~\ref{SecPropK} we also define 
$\K_i(n)$ to be the smallest $u \in \NN$ such that 
$F(u)=n$ and $u \equiv i \pmod{b-1}$.
Of course, the functions $s$, $f$, $F$,  $\Gen$, $\K$, $\K_i$
all depend on the value of $b$, but to indicate this with subscripts
would have made the equations unnecessarily complicated,
so we hope the value of $b$ will always be clear from the context.

In summary, the principal symbols are
$$
\begin{array}{lcl}
b & : &\text{base}, \ge 2, \nonumber \\
s(v) & : &\text{sum of base-$b$ digits of $v$}, \nonumber \\
f(v) & : &v+s(v), \nonumber \\
\Gen(u) & : &\text{set of $v$ such that~} f(v)=u, \nonumber \\
F(u) & : &\text{number of $v$ such that~} f(v)=u, \nonumber \\
\K(n) & : &\text{smallest $u$ such that~} F(u)=n, \nonumber \\
\K_i(n) & : & \text{smallest $u$ such that~} F(u)=n \text{~and~} u\equiv i\pmod{b-1}, \nonumber \\
\B(n)  & : & \text{for} \, n \ge 2, \K(n) = b^{\B(n)} + \textrm{terms of smaller order}, \nonumber \\
\sI & : & \text{subset of the residue classes modulo $b-1$ defined by~}\eqref{sIdef}, \nonumber \\
J(n) & : & \text{subset of $\sI$ defined by~}\eqref{EqJn},
\end{array}
$$
where $b,u,v,n \in \NN$.



\section{Preliminary results}\label{SecLem}


We begin with some elementary lemmas.
\begin{lem}\label{Lem1}
\begin{enumerate}[(i)]
    \item If $b$ is odd, then $f(v)$ is even for any $v \in \NN$,
and so $F(u)=0$ if $u$ is odd.

\item If $b$ is even, then $F(u)=0$ if $u$ is odd and $u<b$.
\end{enumerate}
\end{lem}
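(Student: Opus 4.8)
The plan is to treat the two parts separately, using the explicit expansion $f(v) = \sum_{i=0}^{k-1} v_i(b^i+1)$ together with a trivial size bound; both parts are elementary, so the real work is just recording the right congruence in each case.

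For part (i) I would argue modulo $2$. When $b$ is odd, every power $b^i$ is odd, so $v = \sum_i v_i b^i \equiv \sum_i v_i = s(v) \pmod 2$, whence $f(v) = v + s(v) \equiv 2s(v) \equiv 0 \pmod 2$. Thus $f(v)$ is even for every $v \in \NN$. (Equivalently, each factor $b^i+1$ is even, so every term of $\sum_i v_i(b^i+1)$ is even.) Since an odd $u$ then lies outside the image of $f$, we get $\Gen(u) = \emptyset$ and $F(u) = 0$, as claimed.

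For part (ii) the key observation is that the hypothesis $u < b$ forces any generator to be a single digit. Because $s(v) \ge 0$, any $v \in \Gen(u)$ satisfies $v \le v + s(v) = f(v) = u < b$, so $v$ has length $1$; writing $v = v_0$ gives $s(v) = v_0$ and $f(v) = 2v_0$, which is even. Hence an odd $u < b$ cannot be represented as $f(v)$, and $F(u) = 0$.

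I do not expect either part to present a genuine obstacle, since the content is entirely computational. The one point worth flagging is that in (ii) the restriction $u < b$ is essential: it is exactly what collapses the problem to single-digit generators, for which $f(v) = 2v$. For even $b$ and larger odd $u$ the conclusion can fail, because a multi-digit $v$ may have odd digit sum (e.g.\ a two-digit generator), so the statement is correctly limited to $u < b$.
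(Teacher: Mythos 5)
Your proof is correct and follows essentially the same route as the paper: part (i) rests on the evenness of each term $v_i(b^i+1)$ (your congruence $v \equiv s(v) \pmod 2$ is the same observation in a different dress), and part (ii) uses exactly the paper's point that a generator of $u < b$ must itself be a single digit, so $f(v) = 2v$ is even. The only difference is that you spell out the details the paper leaves implicit, which is fine.
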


\begin{proof}
(i) If $b$ is odd and $v$ is given by \eqref{Eq1_1} then 
$f(v) = \sum_{i} v_i (b^i+1)$ is even. 

(ii) Numbers below $b$ have at most one generator, $v$ (say),
for which $f(v)=2v$ is even.
\end{proof}

The next lemma says that if $u=f(v)$, then
$v$ is smaller than $u$, but not too much smaller.
This is useful when making computer searches.


\begin{lem}\label{Lem2}
If $u \ge 2, v \in \NN$ satisfy $v+s(v)=u$, then
\beql{Eq107_3}
u- (b-1)\Ceil{\log_b (u)} \le v \le u-1 \,.
\eeq
\end{lem}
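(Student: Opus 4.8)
The plan is to establish the two inequalities in \eqref{Eq107_3} separately, each by a short direct estimate on the digit sum $s(v)$.

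First I would prove the upper bound $v \le u-1$. Since $u = v + s(v)$ and $v \ge 1$ (because $u \ge 2$ forces $v \ge 1$; note $f(0)=0$ and $f(v) \ge v+1$ for $v \ge 1$), we have $s(v) \ge 1$, so $v = u - s(v) \le u - 1$. This is essentially immediate from the observation that every positive $v$ has at least one nonzero digit.

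The substance is the lower bound $v \ge u - (b-1)\Ceil{\log_b(u)}$. The idea is to bound $s(v)$ from above in terms of $u$. If $v$ has length $k$, then each of its $k$ digits is at most $b-1$, so $s(v) \le (b-1)k$, giving $v = u - s(v) \ge u - (b-1)k$. It then remains to control the length $k$ of $v$ by the size of $u$. Using the already-proven bound $v \le u-1 < u$, I would argue that $v < u$ implies $k = \Ceil{\log_b(v+1)} \le \Ceil{\log_b(u)}$; more carefully, $v \le u-1$ gives $v+1 \le u$, so $\log_b(v+1) \le \log_b(u)$ and hence $k = \Ceil{\log_b(v+1)} \le \Ceil{\log_b(u)}$. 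Combining, $s(v) \le (b-1)\Ceil{\log_b(u)}$, which yields the desired lower bound on $v$.

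The main thing to be careful about is the bookkeeping on the length $k$ and the ceiling functions, in particular confirming the convention from \eqref{Eq1_1} that $k = \Ceil{\log_b(v+1)}$ and checking that the monotonicity of the ceiling is applied correctly when passing from $v$ to $u$. There is no deep obstacle here; the only potential pitfall is a possible off-by-one in the logarithm (for instance when $u$ is an exact power of $b$), so I would verify the edge cases to make sure the stated inequality holds as written rather than with a $k+1$ or a strict inequality. Once the length bound $k \le \Ceil{\log_b(u)}$ is pinned down cleanly, both halves of \eqref{Eq107_3} follow in a couple of lines.
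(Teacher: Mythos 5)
Your proof is correct and follows the same route as the paper's: the upper bound from $v\ge 1$ (hence $s(v)\ge 1$), and the lower bound from $s(v)\le (b-1)k$ together with the length estimate $k=\Ceil{\log_b(v+1)}\le\Ceil{\log_b(u)}$. Your extra care about the ceiling and edge cases is fine but not needed beyond what the paper's two-line argument already gives.
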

\begin{proof}
Since $u \ge 2$, $v \ge 1$, $s(v) \ge 1$, and so $v \le u-1$. Since
the length of $v$ is 
$k = \Ceil{\log_b(v+1)} \le \Ceil{\log_b (u)}$,
we have $s(v) \le (b-1)\Ceil{\log_b (u)}$.
\end{proof}

The third lemma is a generalization of the observation
that the Hamming weight of $2^{\m}-1-v$ is equal to 
${\m} - \text{Hamming~weight}(v)$. We omit the proof.


\begin{lem}\label{Lem3}
If  ${\m} \ge 0$, $1 \le c \le b-1$, and $0 \le v \le cb^{\m}-1$, 
then
\beql{Eq112_1}
s(cb^{\m}-1-v) ~=~ (b-1){\m}+c-1-s(v)\,.
\eeq
\end{lem}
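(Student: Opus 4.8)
The plan is to show that the subtraction $cb^{\m}-1-v$ involves no borrows, so that the base-$b$ digits of the difference are exactly the digit-wise complements of the digits of $v$ relative to the digits of $cb^{\m}-1$; the digit-sum identity then follows by inspection. First I would record the expansion of $cb^{\m}-1$. Writing $cb^{\m}-1 = (c-1)b^{\m} + (b^{\m}-1)$ and using $b^{\m}-1 = \sum_{i=0}^{\m-1}(b-1)b^i$, we see that $cb^{\m}-1 = [\,c-1,\underbrace{b-1,\ldots,b-1}_{\m}\,]_b$: a number whose digit at position $\m$ is $c-1$ (possibly $0$) and whose digits at positions $0,\ldots,\m-1$ all equal $b-1$. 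In particular $s(cb^{\m}-1) = (b-1)\m + (c-1)$, which already matches the right-hand side of \eqref{Eq112_1} when $v=0$.

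Next I would bound the digits of $v$. Since $c \le b-1$ we have $v \le cb^{\m}-1 < cb^{\m} < b^{\m+1}$, so $v$ has at most $\m+1$ digits; write $v = \sum_{i=0}^{\m} v_i b^i$ with $0 \le v_i \le b-1$. The inequality $v < cb^{\m}$ forces the leading digit to satisfy $v_{\m} \le c-1$, since $v_{\m} \ge c$ would give $v \ge v_{\m}b^{\m} \ge cb^{\m}$. The key step is then to verify that the subtraction is borrow-free. At each position $i$ with $0 \le i \le \m-1$ the digit $b-1$ of $cb^{\m}-1$ satisfies $b-1 \ge v_i$, and at position $\m$ the digit $c-1$ satisfies $c-1 \ge v_{\m}$ by the bound just established. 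Hence
$$cb^{\m}-1-v ~=~ \sum_{i=0}^{\m-1}(b-1-v_i)b^i + (c-1-v_{\m})b^{\m}\,,$$
and every coefficient lies in $\{0,1,\ldots,b-1\}$, so this is a genuine base-$b$ expansion. Summing its digits yields $s(cb^{\m}-1-v) = \sum_{i=0}^{\m-1}(b-1-v_i) + (c-1-v_{\m}) = (b-1)\m + (c-1) - s(v)$, as claimed. (The case $\m=0$ is covered automatically, the first sum being empty.)

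The only real obstacle is the borrow-free claim, and it reduces entirely to the leading-digit bound $v_{\m}\le c-1$; once that is secured the remaining computation is mechanical. This argument also makes transparent why the statement generalizes the Hamming-weight identity quoted before the lemma: specializing to $b=2$ and $c=1$ gives $s(2^{\m}-1-v)=\m-s(v)$.
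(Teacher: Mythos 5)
Your proof is correct. The paper omits the proof of this lemma entirely (``We omit the proof''), so there is no argument to compare against; your verification --- expanding $cb^{m}-1$ as $[\,c-1,b-1,\ldots,b-1\,]_b$, establishing the leading-digit bound $v_{m}\le c-1$ so that the subtraction is borrow-free, and then summing digits --- is the standard argument and surely the one the authors had in mind.
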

\noindent
For example, in base $10$, $s(281) = s(3\cdot10^2-1-18) = 9\cdot2+2-9 = 11$.


\begin{lem}\label{Lem4} Let $n\geq 2$ be an integer.
\begin{enumerate}[(i)]
    \item 
If $\left(a(i)\right)_{i \ge 1}$ is a sequence of nonnegative real numbers such that
$a(m+1)\geq 2a(m)$ for all $m=1,2,\dots,n-2$,
then
\beql{EqSplit}
\min_{1 \le i \le n-1} a(i)+a(n-i) ~=~
a\left( \Ceil{ \tfrac{n}{2} } \right) +
a\left( \Floor{ \tfrac{n}{2} } \right) 
 \quad \text{for~} n \ge 2\,.
\eeq
Moreover, if $a(2)>0$ then $i\in\left\{\Floor{ \tfrac{n}{2} }, \Ceil{ \tfrac{n}{2} }\right\}$ 
are the only values of $i$ that attain the minimum in~\eqref{EqSplit}.

\item If $\left(a_1(i)\right)_{i \ge 1}$ and $\left(a_2(i)\right)_{i \ge 1}$ are a pair of
sequences of nonnegative real numbers such that
\beql{EqSplit2}
a_1(m+1)\geq a_1(m)+a_2(m) \quad \text{and} \quad a_2(m+1) \geq a_1(m)+a_2(m)\,
\eeq
for all $m=1,2,\dots,n-2$,
then
\begin{align}\label{EqSplit3}
\min_{1 \le i \le n-1} & a_1(i)+a_2(n-i) \nonumber \\
& ~=~ \min \left\{
a_1\left( \Ceil{ \tfrac{n}{2} } \right) +
a_2\left( \Floor{ \tfrac{n}{2} } \right),
a_1\left( \Floor{ \tfrac{n}{2} } \right) +
a_2\left( \Ceil{ \tfrac{n}{2} } \right)\right\}\,.
\end{align}
Moreover, if $a_1(2)>0$ and $a_2(2)>0$ then 
$i=\Floor{ \tfrac{n}{2} }$ and $i=\Ceil{ \tfrac{n}{2} }$ 
are the only values of $i$ that attain the
minimum in the left-hand side of~\eqref{EqSplit3}.
\end{enumerate}
\end{lem}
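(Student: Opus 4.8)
The plan is to reduce everything to the first-difference sequences and to show that the quantity being minimized is \emph{unimodal} (nonincreasing, then nondecreasing) with its turning point at the center index. I would first record that both hypotheses force $a_1$ and $a_2$ to be nondecreasing on $\{1,\dots,n-1\}$, since $a_1(m+1)\ge a_1(m)+a_2(m)\ge a_1(m)$ and symmetrically for $a_2$, using only nonnegativity. I would also observe that part~(i) is exactly the diagonal case $a_1=a_2=a$ of part~(ii): then the constraint $a_1(m+1)\ge a_1(m)+a_2(m)$ becomes $a(m+1)\ge 2a(m)$, and the right-hand side of~\eqref{EqSplit3} collapses to the single value $a(\Ceil{n/2})+a(\Floor{n/2})$, recovering~\eqref{EqSplit}. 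So I would prove~(ii) and read off~(i).

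Writing $h(i):=a_1(i)+a_2(n-i)$, the heart of the argument is a one-sided monotonicity estimate. For $1\le i\le \Floor{n/2}-1$, so that $i+1\le n-i-1$, I would telescope
\[
h(i)-h(i+1)=\bigl(a_2(n-i)-a_2(n-i-1)\bigr)-\bigl(a_1(i+1)-a_1(i)\bigr),
\]
bound the first bracket below by $a_1(n-i-1)$ via the hypothesis $a_2(n-i)\ge a_1(n-i-1)+a_2(n-i-1)$, and bound the second bracket above by $a_1(i+1)$ via $a_1(i)\ge 0$. Since $a_1$ is nondecreasing and $n-i-1\ge i+1$, this gives $h(i)-h(i+1)\ge a_1(n-i-1)-a_1(i+1)\ge 0$, so $h$ is nonincreasing on $\{1,\dots,\Floor{n/2}\}$. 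Running the same estimate with the roles of $a_1,a_2$ interchanged and reflecting $i\mapsto n-i$ shows $h$ is nondecreasing on $\{\Ceil{n/2},\dots,n-1\}$. For even $n$ these two ranges meet at the single center $n/2$, so the minimum is $h(n/2)$; for odd $n$ they leave exactly the gap between $\Floor{n/2}$ and $\Ceil{n/2}$ uncontrolled, so the minimum is $\min\{h(\Floor{n/2}),h(\Ceil{n/2})\}$, which is precisely~\eqref{EqSplit3}.

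For the ``moreover'' uniqueness claims I would upgrade each one-sided inequality to a strict one. The hypothesis $a_2(2)>0$ (resp. $a_1(2)>0$) together with the growth propagates to $a_1(m),a_2(m)>0$ for all $m\ge 2$, whence $a_1(m+1)\ge a_1(m)+a_2(m)>a_1(m)$ for $m\ge 2$: both sequences are \emph{strictly} increasing from the second term on. Thus $a_1(n-i-1)>a_1(i+1)$ whenever $n-i-1>i+1$, forcing $h(i)>h(i+1)$ strictly away from the center and pinning the minimizer(s) to $\{\Floor{n/2},\Ceil{n/2}\}$. I expect this strictness to be the main obstacle: the telescoped comparison must genuinely cross an index $\ge 2$ at which the difference strictly increases, so the near-center steps (where $n-i-1$ and $i+1$ are adjacent, and where $a_1(1)$ or $a_2(1)$ could vanish) require separate, careful handling rather than the blanket bound above.
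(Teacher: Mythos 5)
Your proof of the two equalities \eqref{EqSplit} and \eqref{EqSplit3} is correct and complete: the reduction of (i) to the diagonal case $a_1=a_2=a$ of (ii) is valid, the telescoped bound $h(i)-h(i+1)\ge a_1(n-i-1)-a_1(i+1)$ follows from \eqref{EqSplit2} applied at $m=n-i-1$ together with $a_1(i)\ge 0$, and the two-sided unimodality does pin the minimum to the central index (even $n$) or the central pair (odd $n$). In substance this is the same center-outward comparison as the paper's proof; the differences are organizational: you prove two monotone halves where the paper runs a single zig-zag chain $2a(t)\le a(t-1)+2a(t)\le a(t-1)+a(t+1)\le 2a(t+1)\le\cdots$, and you deduce (i) from (ii), whereas the paper proves (i) and declares (ii) ``proved similarly'' (so you actually supply more detail for (ii) than the paper does).

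The gap you flag in the ``moreover'' part is genuine, and it sits exactly where you locate it. For odd $n$ your strictness argument is complete. For even $n=2t$ your blanket bound degenerates at the center step $i=t-1$ (where $n-i-1=i+1$); the repair is to not discard $a_1(i)$: $h(t-1)-h(t)=\bigl(a_2(t+1)-a_2(t)\bigr)-\bigl(a_1(t)-a_1(t-1)\bigr)\ge a_1(t)-\bigl(a_1(t)-a_1(t-1)\bigr)=a_1(t-1)$, which is positive as soon as $t-1\ge 2$. This settles every even $n\ge 6$. For $n=4$, however, no argument can close the gap, because the uniqueness claim is false there: take $a_1=a_2=a$ with $a(1)=0$, $a(2)=1$, $a(3)=2$; the hypotheses hold (with equality in $a(3)\ge 2a(2)$) and $a(2)>0$, yet $a(1)+a(3)=2a(2)=a(3)+a(1)=2$, so $i=1,2,3$ all attain the minimum. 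The paper's own proof has the same unexamined step: for $n=4$ its chain reads $2a(2)\le a(1)+2a(2)\le a(1)+a(3)$, and the assertion that ``every second inequality is strict'' fails when $a(1)=0$ and $a(3)=2a(2)$. This defect is harmless downstream, since every application of the lemma in the paper (via the strict inequalities \eqref{EqKsplit}, \eqref{EqI2}, and their analogues) supplies sequences satisfying the hypotheses strictly, in which case the $n=4$ case goes through as well; but your completed proof establishes the lemma exactly in the generality in which it is true, namely for all $n\ne 4$.
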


\begin{proof}
(i) Suppose $n=2t$. Then
$2a(t) \leq a(t-1)+2a(t) \leq a(t-1)+a(t+1) \leq 2a(t+1) 
\leq a(t+2) \leq a(t-2)+a(t+2) \leq \cdots \leq a(1)+a(n-1)$.
If $n=2t+1$ is odd, then $a(t)+a(t+1) \leq 2a(t+1) \leq a(t+2) 
\leq a(t-1)+a(t+2) \leq \cdots \leq a(1)+a(n-1)$.
If $a(2)>0$, then every second inequality in the inequality chains above is strict, implying that the minimum of $a(i)+a(n-i)$ is attained only at $i=t$ when $n=2t$, 
and only at $i=t$ or $i=t+1$ when $n=2t+1$.

Part (ii) is proved similarly.
\end{proof}

We call a sequence $\left(a(i)\right)_{i \ge 1}$ satisfying \eqref{EqSplit} 
for all $n\geq 2$ a {\em sequence of exponential type}, 
and we say that sequences 
$\left(a_1(i)\right)_{i \ge 1}$ and $\left(a_2(i)\right)_{i \ge 1}$
satisfying~\eqref{EqSplit3} for all $n\geq 2$ form a 
{\em pair of sequences of exponential type}.


The following lemma introduces a representation of integers that plays a central role in our study.

\begin{lem}\label{Lem_mkc}
Let $b\geq 2$ be a fixed integer.
\begin{enumerate}[(i)]
\item 
Every integer $u > b$ has a unique 
representation in the form
\beql{EqStd}
u ~=~ c (b^{\m}+1) + k\,,
\eeq
where ${\m} \ge 1$, $1 \le c \le b-1$, and 
\beql{Eqk}
\begin{cases}
0 \le k \le b^{\m},  &\text{if $c < b-1$}; \\
0 \le k \le b^{\m} - b + 1,   &\text{if $c=b-1$}. \\
\end{cases}
\eeq 

\item
Let $u=c(b^m+1)+k$ and $u'=c'(b^{m'}+1)+k'$ be positive integers 
represented as in~\eqref{EqStd}. 
Suppose that $u\leq u'$. Then $m\leq m'$. Furthermore, if $m=m'$, then $c\leq c'$. Finally, if $m=m'$ and $c=c'$, 
then $k\leq k'$.\footnote{In other words, if $u\leq u'$, the triple $(m,c,k)$ is lexicographically smaller than the triple $(m',c',k')$.}

\end{enumerate}
\end{lem}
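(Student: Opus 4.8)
The plan is to prove part (i) by showing that the map sending a valid triple $(m,c,k)$ to the integer $u = c(b^m+1)+k$ is a bijection onto $\{u \in \NN : u > b\}$, and then to obtain part (ii) as an immediate consequence of the order structure that this bijection makes visible. The whole argument is a tiling computation: I partition the integers larger than $b$ into contiguous intervals indexed by the pairs $(m,c)$, arranged in lexicographic order, with $k$ serving as the offset within each interval.

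First I would fix $m \ge 1$ and $c$ and compute the range of $u = c(b^m+1)+k$ as $k$ runs over its permitted values. The smallest value is $u_{\min}(m,c) = c(b^m+1)$, attained at $k=0$. For the largest, I split on the two cases in~\eqref{Eqk}: a short calculation gives $u_{\max}(m,c) = (c+1)b^m + c$ when $c < b-1$, and $u_{\max}(m,b-1) = b^{m+1}$ when $c = b-1$. Since $u$ increases by exactly $1$ each time $k$ does, for each $(m,c)$ the attained values of $u$ fill the contiguous integer interval $[\,u_{\min}(m,c),\, u_{\max}(m,c)\,]$.

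The key step is to check that these intervals, listed in lexicographic order of $(m,c)$, tile $[b+1,\infty)$ with no gaps and no overlaps. Concretely I would verify the two adjacency identities $u_{\min}(m,c+1) = u_{\max}(m,c)+1$ for $1 \le c < b-1$, and $u_{\min}(m+1,1) = u_{\max}(m,b-1)+1 = b^{m+1}+1$. Telescoping the first shows that the intervals for a fixed $m$ cover exactly $[\,b^m+1,\, b^{m+1}\,]$, and the second shows these blocks stack up to cover $[b+1,\infty) = \{u > b\}$ exactly once. This yields both existence (every $u > b$ lies in exactly one interval, so $(m,c)$ is determined and then $k = u - c(b^m+1)$) and uniqueness (the intervals are disjoint). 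I would remark that the degenerate base $b=2$, in which only $c = b-1 = 1$ occurs, is handled by the very same computation.

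For part (ii) I would observe that the tiling is order-preserving: the interval for $(m,c)$ lies entirely to the left of the interval for $(m',c')$ precisely when $(m,c) <_{\mathrm{lex}} (m',c')$, and within one interval $u$ is strictly increasing in $k$. Hence $u \le u'$ forces the interval of $u$ to precede or coincide with that of $u'$, giving $m \le m'$, and $c \le c'$ when $m=m'$; and when $(m,c)=(m',c')$ the inequality reduces to $k = u - c(b^m+1) \le u' - c(b^m+1) = k'$. I do not anticipate a genuine obstacle; the only delicate point is the endpoint bookkeeping in the $c=b-1$ case, where the shortened range for $k$ in~\eqref{Eqk} is exactly what forces $u_{\max}(m,b-1)=b^{m+1}$ and makes the tiling close up, so that is where I would focus the verification.
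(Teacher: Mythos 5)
Your proof is correct, and it takes a genuinely different route from the paper's, in organization if not in underlying arithmetic. The paper's proof runs in the opposite direction: rather than showing that the forward map $(m,c,k)\mapsto c(b^m+1)+k$ tiles $[b+1,\infty)$, it inverts the representation directly, observing that any valid representation forces $m=\lfloor\log_b(u-1)\rfloor$, $c=\lfloor u/(b^m+1)\rfloor$, and $k=u-c(b^m+1)$, and then it obtains part (ii) by noting that each of these quantities is a non-decreasing function of $u$ (the later ones with the earlier ones held fixed). The mathematical content is equivalent --- your adjacency identities $u_{\min}(m,c+1)=u_{\max}(m,c)+1$ and $u_{\min}(m+1,1)=b^{m+1}+1$ are exactly what justifies those floor formulas, i.e., why a valid representation pins $u$ into the window $c(b^m+1)\le u<(c+1)(b^m+1)$ inside the block $b^m+1\le u\le b^{m+1}$ --- but the two presentations buy different things. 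The paper's inversion is shorter and yields closed-form expressions for recovering $(m,c,k)$ from $u$, though it leaves both the existence half (that these formulas produce a triple satisfying \eqref{Eqk}) and the monotonicity claims as ``not hard to see.'' Your tiling makes existence, uniqueness, and the order-preservation needed for part (ii) all fall out of a single explicitly verified computation, at the cost of a little more endpoint bookkeeping; and you correctly isolate the one delicate point, namely that the truncated range of $k$ at $c=b-1$ is precisely what makes $u_{\max}(m,b-1)=b^{m+1}$, so that the blocks close up with no gap or overlap.
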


\begin{proof} It is not hard to see that the representation 
\eqref{EqStd} uniquely defines $m=\Floor{ \log_b (u-1) }$,
$c= \Floor{ \tfrac{u}{b^m+1} }$, and $k=u - c (b^m+1)$.
The value of $m$ is a non-decreasing function of $u$, 
and so is $c$ when $m$ is fixed, and so is $k$ when $m$ and $c$ are fixed.
\end{proof}

The following examples illustrate how numbers are
represented in the form \eqref{EqStd}:
\begin{itemize}
\item for $b+1 \le u \le b^2$, we have $m=1$, $c = \big\lfloor \frac{u}{b+1} \big\rfloor$, and $k=u-(b+1)c$\,;
\item for $u=b^r$ with $r \ge 2$, we have $m = r-1$, $c=b-1$, and $k=b^{r-1}-b+1$. 
\end{itemize}

We will also need the following technical lemma.


\begin{lem}\label{lem_bound}
For any integers $b\geq 2$ and $m\geq 2$, 
$$(b-1)m - 2 \leq b^m - b + 1$$
and
$$b^m \geq \frac{1}{2}b^m + 2(b-1) \geq b^{m-1} + 2(b-1)\,.$$
\end{lem}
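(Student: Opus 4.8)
The plan is to treat the two displayed inequalities separately, reducing each to an elementary nonnegative polynomial in $b$.

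For the first inequality, $(b-1)m - 2 \leq b^m - b + 1$, I would rewrite it as $g(m) \geq 0$, where $g(m) := b^m - (b-1)m - b + 3$, and establish the stronger fact that $g$ is non-decreasing in $m$ together with $g(2) \geq 0$. The monotonicity follows from the forward difference $g(m+1) - g(m) = (b-1)(b^m - 1)$, which is clearly nonnegative for $b \geq 2$ and $m \geq 0$. It then suffices to check the base case $m = 2$, where $g(2) = b^2 - 3b + 5$; since this quadratic in $b$ has negative discriminant ($9 - 20 < 0$), it is strictly positive for all real $b$, and in particular for every integer $b \geq 2$. This settles the first inequality for all $m \geq 2$.

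For the second chain, I would verify the two inequalities in turn. The right-hand inequality $\tfrac{1}{2}b^m + 2(b-1) \geq b^{m-1} + 2(b-1)$ is equivalent, after cancelling $2(b-1)$, to $\tfrac{1}{2}b^m \geq b^{m-1}$, i.e.\ to $b \geq 2$, which holds by hypothesis. The left-hand inequality $b^m \geq \tfrac{1}{2}b^m + 2(b-1)$ is equivalent to $b^m \geq 4(b-1)$; using $m \geq 2$ I would bound $b^m \geq b^2$ and then observe that $b^2 - 4(b-1) = (b-2)^2 \geq 0$, which closes the chain.

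There is no genuine obstacle here: every step reduces either to a manifestly nonnegative quadratic in $b$ (namely $b^2 - 3b + 5$ and $(b-2)^2$) or to the defining hypothesis $b \geq 2$. The only point requiring minor care is anchoring the monotonicity argument for $g$ at the correct base case $m = 2$, rather than at $m = 0$ or $m = 1$, since the lemma is asserted only for $m \geq 2$; this matters because the hypothesis $m \geq 2$ is exactly what is used in the left-hand inequality of the second chain as well.
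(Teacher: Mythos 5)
Your proof is correct: the forward difference $g(m+1)-g(m)=(b-1)(b^m-1)\ge 0$, the base case $g(2)=b^2-3b+5>0$ (negative discriminant), and the reductions of the second chain to $b\ge 2$ and $(b-2)^2\ge 0$ are all verified accurately, and you correctly anchor the induction at $m=2$. The paper itself omits the proof of this lemma as elementary, so there is no argument to compare against; your write-up simply supplies the missing verification, and it is complete.
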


We omit the elementary proof.



\section{The recurrence for \texorpdfstring{$F(u)$}{F(u)}}\label{SecRec}

The following result, in particular the recurrence \eqref{EqRec}, is the key
to the whole paper.


\begin{thm}\label{ThRec}
We have $\Gen(u) = \emptyset$ and $F(u)=0$ when $u<0$ or $u=1$, 
and $\Gen(0) = \{0\}$, $F(0)=1$.
For $u \ge 2$, consider its representation 
$u=c (b^m+1)+k$ defined in Lemma~\ref{Lem_mkc}.
Then\footnote{We remark that the identity \eqref{EqRec} does not mention $c$ and 
holds even when the argument $(b-1){\m}-k-2$ is negative.}
\beql{EqGen}
\Gen(u) ~=~ \{cb^{\m}+v \mid v \in \Gen(k) \} 
~\cup~
\{cb^{\m} - 1 - v \mid v \in \Gen((b-1){\m}-k-2) \}
\eeq
and
\beql{EqRec}
F(u) ~=~ F(k) ~+~ F((b-1){\m}-k-2)\,.
\eeq
\end{thm}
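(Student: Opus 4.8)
The plan is to realize the two sets in \eqref{EqGen} as the images of $\Gen(k)$ and $\Gen((b-1)m-k-2)$ under the explicit maps $\phi_1\colon v'\mapsto cb^m+v'$ and $\phi_2\colon v'\mapsto cb^m-1-v'$, and to show that these give a disjoint decomposition of $\Gen(u)$. Once $\phi_1$ and $\phi_2$ are established as well-defined injections into $\Gen(u)$ whose images are disjoint and together exhaust $\Gen(u)$, the set identity \eqref{EqGen} follows, and \eqref{EqRec} is obtained by counting. A useful preliminary observation, to be recorded first, is that every $v\in\Gen(u)$ has length at most $m+1$: from Lemma~\ref{Lem_mkc} one checks $u< b^{m+1}$ when $c<b-1$ and $u\le b^{m+1}$ when $c=b-1$, so Lemma~\ref{Lem2} gives $v\le u-1< b^{m+1}$, whence $v=\sum_{i=0}^{m}v_ib^i$ with a well-defined leading coefficient $v_m$.

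For well-definedness of $\phi_1$, I would take $v'\in\Gen(k)$; if $k\ge 2$ then $v'\le k-1\le b^m-1$ by Lemma~\ref{Lem2}, while $k\in\{0,1\}$ is trivial, so in all cases $v'<b^m$. Then $v:=cb^m+v'$ has digit $c$ in position $m$ with no carry, giving $s(v)=c+s(v')$ and $f(v)=c(b^m+1)+f(v')=u$. For $\phi_2$, I would take $v'\in\Gen((b-1)m-k-2)$; a short computation using Lemma~\ref{lem_bound} (with a direct check for $m=1$) guarantees $0\le v'\le cb^m-1$, so Lemma~\ref{Lem3}, applied with this $c$ and $m$, yields $s(cb^m-1-v')=(b-1)m+c-1-s(v')$ and hence $f(cb^m-1-v')=c(b^m+1)-2+(b-1)m-f(v')=u$. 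Both maps are manifestly injective, and their images are disjoint because $\phi_1$ lands in $[cb^m,(c+1)b^m)$ while $\phi_2$ lands in $[0,cb^m)$.

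The step I expect to be the main obstacle is surjectivity, i.e.\ showing every $v\in\Gen(u)$ lies in one of the two images; this reduces to proving the leading coefficient satisfies $v_m\le c$. I would argue by contradiction: if $v_m\ge c+1$ then $v\ge(c+1)b^m$ and $s(v)\ge c+1$, while $v=u-s(v)=cb^m+c+k-s(v)$ combined with $v\ge(c+1)b^m$ forces $s(v)\le c+k-b^m$; together with $s(v)\ge c+1$ this gives $k\ge b^m+1$, contradicting $k\le b^m$ from Lemma~\ref{Lem_mkc}. Thus $v_m\le c$. If $v_m=c$ then $v-cb^m<b^m$ and the computation for $\phi_1$ read backwards shows $v-cb^m\in\Gen(k)$; if $v_m\le c-1$ then $v<cb^m$, so $v':=cb^m-1-v\in[0,cb^m-1]$ and Lemma~\ref{Lem3} gives $v'\in\Gen((b-1)m-k-2)$. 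This places every $v$ in exactly one image and establishes \eqref{EqGen}; taking cardinalities of the disjoint union then yields $F(u)=F(k)+F((b-1)m-k-2)$, which is \eqref{EqRec}.
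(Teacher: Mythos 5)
Your proposal is correct and follows essentially the same route as the paper's proof: the identical trichotomy on the leading digit of a generator (equivalently, generators $\ge cb^{\m}$ versus $< cb^{\m}$), the same maps $v'\mapsto cb^{\m}+v'$ and $v'\mapsto cb^{\m}-1-v'$, and the same appeal to Lemma~\ref{Lem3} for the second map. The only local difference is that you exclude the overflow case $v_{\m}\ge c+1$ by the one-line inequality $s(v)\ge v_{\m}$ combined with $v=u-s(v)$ and $k\le b^{\m}$ from Lemma~\ref{Lem_mkc}, whereas the paper splits into the subcases $c=b-1$ and $c\le b-2$ with a carry computation; your variant is slightly cleaner, and your explicit injectivity/disjointness observation properly justifies the passage from \eqref{EqGen} to \eqref{EqRec} that the paper treats as immediate.
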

\noindent

\begin{proof}
The first assertion is clear (there is no $v \in \NN$ such that
$f(v)=1$), and \eqref{EqRec} follows at once from \eqref{EqGen}.
To prove \eqref{EqGen}, let $u ~=~ c(b^{\m}+1) + k$ as in Lemma~\ref{Lem_mkc}.
We will show that (i) any element of the right-hand side of \eqref{EqGen}
is a generator of $u$, and (ii) every generator of $u$ is an element of the
right-hand side of \eqref{EqGen}.

(i) Suppose $v \in \Gen(k)$. 
Since $k \le b^{\m}$, $v < b^{\m}-1$ (by \eqref{Eq107_3}), 
we have $s(cb^{\m}+v) = c+s(v)$, and
$f(cb^{\m}+v)=cb^{\m}+v+c+s(v)=c(b^{\m}+1)+k=u$.
On the other hand, suppose $v \in \Gen((b-1){\m}-k-2)$. Let $w = cb^{\m}-1-v$.
By Lemma \ref{Lem3}, $s(w)=(b-1){\m}+c-1-s(v)$
(the condition $v \le cb^{\m}-2$ follows from $v < (b-1){\m}-k-2$).
Then $f(w) = w+s(w) = cb^{\m}-1-v+(b-1){\m}+c-1-s(v) = c(b^{\m}+1)+k = u$.

(ii) Suppose $w$ is a generator for $u = c(b^{\m}+1)+k$. 
Clearly, $u \le b^{{\m}+1}$, and $u=b^{{\m}+1}$ only 
when $c=b-1$ and $k=b^{\m}-b+1$. 
Trivially, either $w \ge cb^{\m}$ or $w<cb^{\m}$.

First, suppose $w \ge cb^{\m}$ and write it as $w=cb^{\m}+v$. 
If $v<b^{\m}$ then $s(w)=c+s(v)$, and $w+s(w)=u$ implies $v+s(v)=k$ and
$v \in \Gen(k)$. 
If $v \ge b^{\m}$ then 
if $c=b-1$, $w \ge b^{{\m}+1} \ge u$, contradicting
\eqref{Eq107_3}. 
So $c \le b-2$ and $w = (c+1)b^{\m}+\mu$, where $\mu=v-b^{\m}\geq 0$, 
which implies $u\geq (c+1)b^{\m}$, that is, $u = (c+1)b^{\m} + \lambda$, where $\lambda = c+k-b^{\m} \leq c$.
But $w+s(w)=u$ implies $\mu + s(\mu) + c+1 ~=~ \lambda$, a contradiction.
So $v \ge b^{\m}$ cannot happen.

Second, suppose $w< cb^{\m}$, that is, $w = cb^{\m}-1-v$ 
with $0 \le v \le cb^{\m}-2$. By Lemma \ref{Lem3}, $s(w)=(b-1){\m}+c-1-s(v)$,
and $w+s(w)=u$ implies $v+s(v)=(b-1){\m}-k-2$ and
$v \in \Gen((b-1){\m}-k-2)$.
\end{proof}

For example, in base $10$, if $u=10^{13}+1$, 
we have $c=1$, ${\m}=13$, $k=0$, so
$\F(10^{13}+1) = \F(0) + \F(115)$. Now $115 = 10^2+1+14$, 
so $\F(115)=\F(14)+\F(2) = 1+1$, and therefore $\F(10^{13}+1)=3$.
In Section~\ref{SecPropK} we will confirm Kaprekar's result that there
is no smaller number with three inverses.



Theorem~\ref{ThRec} can be used for computing the values of $\K(n)$ for small $n$.
In~\ref{AppPARI}, we provide a PARI/GP program that implements the formula \eqref{EqGen}.
We used it to compute the entries below $10^{10}$ in Tables~\ref{TabK1} and~\ref{TabK2}, which show the values of
$\K(n)$ for $n \le 7$ and bases $2 \le b \le 10$. 
The values of $\K(2)$ and $\K(3)$ for any base $b$ will be 
derived in the next section, and the values of $\K(n)$ for 
any $n$ and bases $2$ and $3$ in Section~\ref{SecB2}.
The values of $\K(n)$ in Tables~\ref{TabK1} and~\ref{TabK2}
for $n \ge 4$ and bases $b \ge 4$ are included here for convenience,
but they will not be officially established
until we have the recurrences of Section~\ref{SecBb}.\footnote{The values of $\K(2)$ and $\K(3)$ could also be
obtained from the recurrences of Section~\ref{SecBb}, 
but it seems more informative to calculate them directly.}

\begin{table}[thb]
$$
\begin{array}{|c|ccccc|}
\hline
b       &  2 & 3 & 4 & 5 & 6   \\
\hline
\K(1) & 0 & 0 & 0         & 0 & 0 \\
\K(2) & 2^2+1 & 3+1 & 4^2+1       & 5+1 & 6^2+1 \\
\K(3) & 2^7+1 & 3^3+1 & 4^7+1       & 5^2+1 & 6^9+1 \\
\K(4) & 2^{12}+6 & 3^5+5 & 4^{12}+18       & 5^4+7 & 6^{16}+38 \\
\K(5) & 2^{136}+6 & 3^{17}+5 & 4^{5468}+18       & 5^9+9 & 6^{(6^9+44)/5}+38  \\
\K(6) & 2^{260}+130 & 3^{29}+29 & 4^{10924}+4^7+2          & 5^{15}+27 & 6^{(2 \cdot 6^9 +8)/5}+6^9+2 \\
\K(7) & 2^{4233}+130 & 3^{139}+29 & 4^{\B_4(7)}+4^7+21,          & 5^{165}+27 & 6^{\B_6(7)}+6^9+2, \\
        &                         &                    & {}^{\B_4(7)=(4^{12}+4^7+40)/3} &                   & {}^{\B_6(7)=(6^{16} + 6^9 + 43)/5} \\
\hline
\end{array}
$$
\caption{Values of $\K(1),\ldots, \K(7)$ for bases $b=2,\ldots,6$ (the columns are 
\seqnum{A230303}\label{A230303b}, 
\seqnum{A230640}\label{A230640b}, 
\seqnum{A230638}\label{A230638b}, 
\seqnum{A230867}\label{A230867}, 
\seqnum{A238840}\label{A238840}).}
\label{TabK1}
\end{table}

\begin{table}[thb]
$$
\begin{array}{|c|cccc|}
\hline
b       &  7 & 8 & 9 & 10   \\
\hline
\K(1) & 0 &  0                                          & 0  & 0 \\
\K(2) & 7+1 &  8^2+1                                         & 9+1  & 10^2+1 \\
\K(3) & 7^2+1 & 8^{11}+1                  & 9^2+1  & 10^{13}+1 \\
\K(4) & 7^3+9 &  8^{20}+66              & 9^3+11  & 10^{24}+102 \\
\K(5) & 7^{10}+9 &  8^{\B_8(5)}+66,     & 9^{12}+11  & 10^{\B_{10}(5)}+102, \\
        &      &  {}^{\B_8(5)=(8^{11}+76)/7} &    & {}^{\B_{10}(5)=(10^{13}+116)/9}    \\
\K(6) & 7^{17}+51 &  8^{\B_8(6)}+8^{11}+2,                                           & 9^{21}+83  & 10^{\B_{10}(6)}+10^{13}+2, \\
        &                   &  {}^{\B_8(6)=(2 \cdot 8^{11}+12)/7} &                     & {}^{\B_{10}(6)=2  (10^{13}+8)/9}    \\
\K(7) & 7^{67}+51 &  8^{\B_8(7)}+8^{11}+2,                                           & 9^{103}+83  & 10^{\B_{10}(7)}+10^{13}+2, \\
        &                   &  {}^{\B_8(8)=(8^{20}+8^{11}+75)/7} &                     & {}^{\B_{10}(7)=(10^{24}+10^{13}+115)/9}    \\
\hline
\end{array}
$$
\caption{Values of $\K(1),\ldots, \K(7)$ for bases $b=7,\ldots,10$ (the columns are 
\seqnum{A238841}\label{A238841}, 
\seqnum{A238842}\label{A238842}, \seqnum{A238843}\label{A238843}, 
\seqnum{A006064}\label{A006064b}).}
\label{TabK2}
\end{table}


\section{Properties of \texorpdfstring{$\K(n)$}{K(n)}}\label{SecPropK}


When $u = \K(n)$, the smallest number with $n$
generators in base $b$, Theorem~\ref{ThRec} allows us
to make a stronger assertion than \eqref{EqStd}.
(Note that $\K(1)= 0$ for any base.)

\begin{thm}\label{Th131_1}
Let $b \ge 2$ and $n \ge 2$.
\begin{enumerate}[(i)]
\item $\K(n)$ has the following representation in the form~\eqref{EqStd}:
\beql{Eq131_1}
\K(n) ~=~ b^{\B(n)}+1+k\,,
\eeq
where the exponent $\B(n)$ is at least $1$ 
and\,\footnote{By Lemma~\ref{lem_bound}, the upper bound for $k$ here is better than~\eqref{Eqk} when $\B(n)\geq 2$, 
which holds for $n\geq 3$ (as we will show below).}
$0 \le k \le (b-1)\B(n)-2$.

\item If $b$ is odd, then both $\K(n)$ and $k$ are even.
\end{enumerate}
\end{thm}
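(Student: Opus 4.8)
The plan is to extract both parts directly from the recurrence \eqref{EqRec}, exploiting the feature highlighted in the footnote to Theorem~\ref{ThRec}: the value $F(u)$ depends on the representation \eqref{EqStd} only through $\m$ and $k$, never through $c$. The main work is to convert this $c$-independence, together with the minimality defining $\K(n)$, into the two claimed conclusions.

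First I would check that Lemma~\ref{Lem_mkc} even applies, i.e.\ that $\K(n) > b$. For any $u$ with $2 \le u \le b$, Lemma~\ref{Lem2} forces every generator $v$ to satisfy $v \le u-1 \le b-1$, so $v$ is a single digit, $s(v)=v$, and $f(v)=2v$; thus $u=2v$ determines $v$ uniquely and $F(u)\le 1$ (the cases $u\le 1$ being trivial). Since $F(\K(n))=n\ge 2$, we get $\K(n)>b$, and Lemma~\ref{Lem_mkc}(i) yields a unique $\K(n)=c(b^\m+1)+k$ with $\m\ge 1$. To prove $c=1$, I would argue by minimality: by \eqref{EqRec}, $n=F(\K(n))=F(k)+F((b-1)\m-k-2)$, an expression not involving $c$. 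The constraint \eqref{Eqk} forces $k\le b^\m$ in all cases, so if $c\ge 2$ the triple $(\m,1,k)$ is an admissible representation of $u':=b^\m+1+k$; since $u'>b$, Theorem~\ref{ThRec} gives $F(u')=F(k)+F((b-1)\m-k-2)=n$, while Lemma~\ref{Lem_mkc}(ii) gives $u'<\K(n)$, contradicting minimality. Hence $c=1$, and setting $\B(n):=\m\ge 1$ gives \eqref{Eq131_1}.

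For the sharper bound on $k$, I would again use minimality. Suppose $k>(b-1)\B(n)-2$. Then $(b-1)\B(n)-k-2<0$, so by Theorem~\ref{ThRec} the second term of \eqref{EqRec} vanishes and $n=F(k)$. But $k<b^{\B(n)}+1+k=\K(n)$, so $k$ would be a strictly smaller number with exactly $n$ generators, contradicting the definition of $\K(n)$. Therefore $0\le k\le (b-1)\B(n)-2$, completing part (i). Part (ii) is then immediate: when $b$ is odd, Lemma~\ref{Lem1}(i) shows $F$ vanishes on all odd arguments, so $F(\K(n))=n\ge 2$ forces $\K(n)$ to be even; and since $b^{\B(n)}+1$ is even (as $b$ is odd), the relation $k=\K(n)-(b^{\B(n)}+1)$ shows $k$ is even as well.

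The only genuinely delicate step is the reduction to $c=1$. It hinges on two facts that must be stated carefully: that \eqref{EqRec} is truly blind to $c$, and that decreasing $c$ to $1$ keeps $k$ within the admissible range of \eqref{Eqk}, so that the smaller number $u'$ still has a valid representation to which Theorem~\ref{ThRec} applies. Once these are in place, both the value $c=1$ and the bound on $k$ follow from the same minimality-plus-recurrence argument, and part (ii) is a short parity observation.
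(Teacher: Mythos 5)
Your proof is correct and takes essentially the same approach as the paper: show $\K(n)>b$, exploit the fact that the recurrence \eqref{EqRec} does not involve $c$ to replace a representation with $c>1$ by the smaller number $u'=b^{\m}+1+k$ with the same generator count (contradicting minimality of $\K(n)$), rule out $k>(b-1)\B(n)-2$ because the second term of \eqref{EqRec} would then have negative argument and vanish, and deduce part (ii) from Lemma~\ref{Lem1} plus the parity of $b^{\B(n)}+1$. Your extra care in verifying that $k\le b^{\m}$ keeps the triple $(\m,1,k)$ admissible in \eqref{Eqk} and that $u'>b$ fills in details the paper leaves implicit, but the argument is the same.
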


\begin{proof}
Let $n\geq 2$. We notice that no integer from $1$ to $b$ can have more than one generator, and hence $\K(n)>b$.

(i) Let $u := \K(n)$, and write $u=c(b^{\m}+1)+k$ as in Lemma~\ref{Lem_mkc}.
If $c>1$, then we let $u':=b^k+1+k$ and notice that, by~\eqref{EqRec},
$\F(u')=\F(u)=n$ while $u'<u$, a contradiction. Hence $c=1$.

If $k > (b-1){\m}-2$, we apply~\eqref{EqRec} and obtain
$n = \F(u) = \F(k)+\F((b-1){\m}-k-2)$. Since the argument of the
last term is negative, we have $\F(u)=\F(k)$ while $k<u$, a contradiction.
Hence
$k\leq (b-1){\m}-2$. 

(ii) If $b$ is odd, $\K(n)$ is even by Lemma \ref{Lem1}, and 
therefore $k$ is even.
\end{proof}

In Theorems~\ref{ThK2} and \ref{ThK3} we compute 
the values of $\K(2)$ and $\K(3)$ and the corresponding values 
of $\B(2)$ and $\B(3)$.


\begin{thm}\label{ThK2}
For any $b\geq 2$, we have
\beql{EqK2}
\K(2) ~=~ 
\begin{cases}
b^2+1,  &\text{if $b$ is even}; \\
b + 1,   &\text{if $b$ is odd}; \\
\end{cases}
\qquad
\B(2) ~=~ 
\begin{cases}
2,  &\text{if $b$ is even}; \\
1,   &\text{if $b$ is odd}. \\
\end{cases}
\eeq
\end{thm}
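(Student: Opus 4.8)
The plan is to combine the recurrence \eqref{EqRec} with one elementary fact, already observed in the proof of Theorem~\ref{Th131_1}: every integer $u$ with $0\le u\le b$ has at most one generator, since any $v$ with $f(v)=u\le b$ must be a single digit satisfying $2v=u$. Consequently $F(u)=1$ when $u$ is even and $F(u)=0$ when $u$ is odd, for $0\le u\le b$ (the odd case being also Lemma~\ref{Lem1}(ii)). With this ``single-digit'' evaluation of $F$ in hand, I would locate $\K(2)$ by scanning the integers larger than $b$ in increasing order and reducing $F$ via \eqref{EqRec}; the even and odd parities of $b$ behave quite differently, which is why the two cases of \eqref{EqK2} look so unlike.

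For $b$ odd the argument is short. The smallest integer exceeding $b$ is $b+1$, whose representation \eqref{EqStd} has $c=1$, $m=1$, $k=0$, so \eqref{EqRec} gives $F(b+1)=F(0)+F(b-3)=1+F(b-3)$. As $b$ is odd, $b-3$ is a nonnegative even number below $b$, with the single generator $(b-3)/2$, so $F(b-3)=1$ and $F(b+1)=2$. Since every $u\le b$ has $F(u)\le 1$, the number $b+1$ is the smallest with two generators, giving $\K(2)=b+1$ and, comparing with \eqref{Eq131_1}, $\B(2)=1$.

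For $b$ even the obstacle, and the step I expect to require the most care, is showing that none of the numbers in the range $b<u\le b^2$ (those with $m=1$ in \eqref{EqStd}) reaches $F=2$. For such $u$ the recurrence collapses to $F(u)=F(k)+F(b-3-k)$ with $0\le k\le b$, independently of $c$. If $b-3-k<0$ the second term vanishes and $F(u)=F(k)\le 1$; if $b-3-k\ge 0$ both arguments lie in $[0,b-3]\subset[0,b)$, and since $b-3$ is odd they have opposite parity, so exactly one is even and contributes $1$ while the other contributes $0$, whence $F(u)=1$. Thus $F(u)\le 1$ for all $u\le b^2$, and the first untested candidate is $u=b^2+1$ (that is, $c=1$, $m=2$, $k=0$), for which \eqref{EqRec} yields $F(b^2+1)=F(0)+F(2b-4)=1+F(2b-4)$.

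It then remains to check $F(2b-4)=1$. Here $b-2$ is a generator, since $f(b-2)=2(b-2)=2b-4$, and it is the only one: by Lemma~\ref{Lem2} any generator $v$ satisfies $v\le 2b-5$, so $v$ is either a single digit, forcing $2v=2b-4$ and hence $v=b-2$, or a two-digit number $v=b+v_0$ with $f(v)=b+1+2v_0=2b-4$, which would need $v_0=(b-5)/2\notin\NN$ because $b$ is even. Therefore $F(2b-4)=1$, so $F(b^2+1)=2$, giving $\K(2)=b^2+1$ and $\B(2)=2$. The genuinely delicate point throughout is the parity bookkeeping that blocks every $m=1$ number from acquiring a second generator when $b$ is even; once that is settled, the rest is a direct application of \eqref{EqRec} together with the single-digit evaluation of $F$.
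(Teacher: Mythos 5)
Your proof is correct, but it takes a genuinely different route from the paper's. The paper argues concretely: for even $b$ it exhibits the two generators $b^2$ and $b^2-b+1 = [b-1,1]_b$ of $b^2+1$ to get the upper bound, and then disposes of the lower bound by asserting that the values $f(v)$ for $v$ up to $b^2$ are pairwise distinct, ``easy to check by hand'' (the odd case is declared even easier and omitted). You instead run everything through the recurrence \eqref{EqRec}: after the single-digit evaluation $F(u)=1$ or $0$ according to the parity of $u\le b$, you compute $F(b+1)=1+F(b-3)=2$ for odd $b$, and for even $b$ you kill every candidate $b<u\le b^2$ with the parity argument $F(u)=F(k)+F(b-3-k)$, where exactly one summand is $1$ and the other $0$ (or the second argument is negative), before evaluating $F(b^2+1)=1+F(2b-4)=2$. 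What your approach buys is that the paper's hand-checked injectivity step becomes a short, fully rigorous deduction from machinery already proved; it even sidesteps a small imprecision in the paper, whose literal claim fails at the endpoint since $f(b^2-b+1)=f(b^2)=b^2+1$ (the intended range is $v<b^2-b+1$, or one restricts to collisions landing at or below $b^2$). What the paper's approach buys is independence from Theorem~\ref{ThRec} and explicit knowledge of the generators themselves. One trivial quibble with yours: in the final uniqueness check for $F(2b-4)$, Lemma~\ref{Lem2} requires $u\ge 2$, so for $b=2$ (where $2b-4=0$) you should appeal directly to $F(0)=1$, which holds by definition; alternatively, note that $F(2b-4)\le 1$ already follows from your own analysis of the range $u\le b^2$, making the uniqueness argument redundant.
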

\begin{proof}
Suppose $b$ is even. The number $b^2+1= [101]_b$ has the two generators
$b^2 = [100]_b$ and $b^2-b+1 = [b-1,1]_b$, so $\K(2) \le b^2+1$.
However, it is easy to check by hand that the values of
$f(v)$ for $0 \le v \le b^2$ are all distinct, so $\K(2)= b^2+1$.
The case when $b$ is odd is even easier to verify, and we omit the details.
\end{proof}


\begin{thm}\label{ThK3}
For any $b\geq 2$, we have
\beql{EqK3}
\K(3) ~=~ 
\begin{cases}
129,  &\text{if $b=2$}; \\
28,   &\text{if $b=3$}; \\
b^{b+3}+1,  &\text{if $b \ge 4$ is even}; \\
b^2 + 1,   &\text{if $b \ge 5$ is odd}; \\
\end{cases}
\qquad
\B(3) ~=~ 
\begin{cases}
7,  &\text{if $b=2$}; \\
3,   &\text{if $b=3$}; \\
b+3,  &\text{if $b \ge 4$ is even}; \\
2,   &\text{if $b \ge 5$ is odd}. \\
\end{cases}
\eeq
\end{thm}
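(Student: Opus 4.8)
The plan is to pin down $\K(3)=b^{\B(3)}+1+k$ by feeding the structural constraints of Theorem~\ref{Th131_1} into the recurrence~\eqref{EqRec} and using the known value of $\K(2)$ from Theorem~\ref{ThK2}. By Theorem~\ref{Th131_1} any minimizer $u=\K(3)$ has leading coefficient $c=1$, so $u=b^m+1+k$ with $m:=\B(3)\ge 1$ and $0\le k\le (b-1)m-2$; writing $T:=(b-1)m-2$, the two arguments produced by~\eqref{EqRec} are $k$ and $T-k$, and $F(u)=F(k)+F(T-k)=3$. The first reduction is to note that both arguments are strictly below $u$, since $T<b^m+1\le u$ (directly for $m=1$ and by Lemma~\ref{lem_bound} for $m\ge 2$). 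Hence neither can have $F$-value $3$, for that would contradict the minimality of $\K(3)$; as two nonnegative integers summing to $3$ with neither equal to $3$ must be $1$ and $2$, I get the forced split $\{F(k),F(T-k)\}=\{1,2\}$.

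The second step is the crude lower bound: whichever argument realizes the value $2$ is at least $\K(2)$, and since both $k$ and $T-k$ are at most $T$, this forces $T=(b-1)m-2\ge \K(2)$. This single inequality disposes of three of the four cases. For $b=2$ it reads $m-2\ge 5$, i.e. $m\ge 7$; for $b=3$ it excludes $m=2$ (where $T=2<4=\K(2)$) and leaves $m\ge 3$; and for odd $b\ge 5$ it excludes $m=1$ (where $T=b-3<b+1=\K(2)$) and leaves $m\ge 2$. In each of these cases I would then confirm the claimed value by taking $k=0$, so that $F(\K(3))=F(0)+F(T)=1+F(T)$, and checking directly from~\eqref{EqRec} that $F(T)=2$ for the asserted exponent (for instance $F(5)=2$ in base $2$, $F(4)=2$ in base $3$, and $F(2b-4)=2$ for odd $b\ge 5$). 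Since $k=0$ gives the smallest $u$ at a fixed $m$ and the asserted $m$ equals the lower bound just obtained, this determines $\K(3)$ exactly.

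The only case where the crude bound fails to be tight, and therefore the main obstacle, is even $b\ge 6$: there $T\ge \K(2)=b^2+1$ only forces $m\ge b+2$, whereas the claimed value is $\B(3)=b+3$, so I must separately rule out $m=b+2$. For $m=b+2$ one has $T=b^2+b-4\ge b^2+1$, so an $F$-value of $2$ is not excluded by size alone. The key point is that any argument with $F=2$ is at least $\K(2)=b^2+1$, which pushes its partner into the interval $[0,b-5]$; and for a partner below $b$ to have $F=1$ it must, by Lemma~\ref{Lem1}(ii), be even. Writing the large argument as $b^2+1+j$ and applying~\eqref{EqRec} once gives $F(b^2+1+j)=F(j)+F(2b-4-j)$; since the even partner equals $b-5-j$ and $b-5$ is odd, $j$ is forced to be odd, so $F(j)=0$, while $2b-4-j<\K(2)$ gives $F(2b-4-j)\le 1$. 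Thus no argument in range can attain $F=2$ alongside a partner attaining $F=1$, so $m=b+2$ is impossible and $\B(3)\ge b+3$. This matches the upper bound obtained by taking $m=b+3$, $k=0$ and verifying, through two unwindings of~\eqref{EqRec}, that $F\big((b-1)(b+3)-2\big)=F(b^2+2b-5)=2$. The remaining even value $b=4$ needs no extra work, since there $b+2=6$ already yields $T=16<17=\K(2)$ and is excluded by the crude bound.
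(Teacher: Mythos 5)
Your proof is correct, and in the one genuinely hard case it coincides with the paper's: for even $b\ge 6$ the paper also combines Theorem~\ref{Th131_1} with \eqref{EqRec} and the bound $(b-1)m-k-2\ge \K(2)=b^2+1$ to force $m=b+2$ with the small argument in $[0,b-5]$, and then kills that case by the identical parity argument (the small argument must be even by Lemma~\ref{Lem1}(ii), so the quantity you call $j$ is odd, $F(j)=0$, and the large argument $b^2+1+j$ cannot have two generators since its second recurrence term is below $\K(2)$). Where you genuinely diverge is everywhere else. The paper disposes of $b\in\{2,3,4\}$ by citing Table~\ref{TabK1} (i.e., direct search), handles odd $b\ge 5$ by exhibiting three explicit generators of $b^2+1$ and asserting that one can "easily check" no two smaller values of $f$ triple up, and obtains its upper bound for even $b\ge 6$ by writing down three explicit generators of $b^{b+3}+1$ via Lemma~\ref{Lem3}. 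You instead run every case through the same machinery: the crude bound $T=(b-1)\B(3)-2\ge\K(2)$ settles $b=2$, $b=3$, $b=4$, and odd $b\ge 5$ uniformly with no search, and all upper bounds come from unwinding \eqref{EqRec} once or twice (e.g., $F(b^{b+3}+1)=1+F(b^2+2b-5)=1+F(2b-6)+F(2)=3$) rather than from listing generators. This buys uniformity and self-containedness -- nothing is delegated to tables or to an unproven "easy check" -- at essentially no extra length; what the paper's route buys is concrete information, namely the actual elements of $\Gen(\K(3))$, which your recurrence computations leave implicit.
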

\begin{proof}
For $b\in\{2,3,4\}$, we refer to Table \ref{TabK1}.
Suppose first that $b \ge 6$ is even.
Certainly $b^{b+3}+1$ has three generators, 
namely $b^{b+3}-1-[1,0,b-3]_b$,
$b^{b+3}-1-[b-1,b-2]_b$, and $b^{b+3}$ (this is easily checked
using Lemma~\ref{Lem3}).
So $\K(3) \le b^{b+3}+1$.
If $u:=\K(3)<b^{b+3}+1$, then by Theorem~\ref{Th131_1} we would have
$u = b^m+1+k$ with $m \le b+2$ and $0 \le k \le (b-1)m-2$.
By \eqref{EqRec} we have
$3 = F(u) = F(k)+F((b-1)m-k-2)$.
So either $F(k)=1$ and $F((b-1)m-k-2)=2$,
or $F(k)=2$ and $F((b-1)m-k-2)=1$.
We discuss only the first possibility, the second being similar.
From $k \ge \K(1) = 0$ and
$(b-1)m-k-2 \ge \K(2) = b^2+1$, we find that $m$
must equal $b+2$, and $0 \le k \le b-5$. 
By Lemma~\ref{Lem1}, $k$ is even. 
Let $\la := b-5-k$, which is odd, and $0 \le \la \le b-5$.
But now $F(b^2+\la+1) = 2 = F(\la)+F(\la-4)$ implies 
(again by Lemma~\ref{Lem1}) that $\la$ is even, a contradiction.
This completes the proof of $\K(3)=b^{b+3}+1$ for even $b \ge 6$.

Now, suppose that $b \ge 5$ is odd. 
Certainly $b^2+1$ has three generators,
$b^2 -1 - [1,(b-5)/2]_b$,
$b^2-1-[b-2]_b$, and $b^2$,
and we can easily check that at most
two of the values $f(v)$ for $0 \le v \le b^2$  can coincide.
\end{proof}

Theorem~\ref{ThK3} confirms Kaprekar's result that $\K(3)=10^{13}+1$ in base $10$.

When we come to study the case of a general base $b$ in Section~\ref{SecBb},
we will need to know the values of 
a refined version of $\K(n)$.
We define $\K_i(n)$ to be the smallest number $v \equiv i \pmod{b-1}$ for which $F(v)=n$,
where $i$ is a residue class modulo $b-1$.
If $b$ is odd, by Lemma~\ref{Lem1} we need only consider
even values of $i$, so we can say more precisely that $i \in \sI$,
where $\sI$ is a subset of the residue classes modulo $b-1$ given by
\beql{sIdef}
\sI := \begin{cases}
 \{0,1,2,3, \ldots, b-2\}, &  \text{if $b$ is even},\\ 
  \{0,2,4,6, \ldots, b-3\}, & \text{if $b$ is odd}.
\end{cases}
\eeq
Then we have: 
\beql{EqGTM}
\K(n) ~=~ \min_{i \in \sI} \K_i(n)\,.
\eeq

\noindent
There is an analog of Theorem~\ref{Th131_1} for $\K_i(n)$.


\begin{thm}\label{Th_form_Ki}
For any $b \ge 2$, $i\in\sI$, and $n \ge 2$, 
$\K_i(n)$ has the following representation in the form~\eqref{EqStd}:
\beql{Eq_form_Ki}
\K_i(n) ~=~ c (b^{\B(n)}+1)+k\,,
\eeq
where $\B(n)$ is as in \eqref{Eq131_1},\footnote{However, the $k$ in 
\eqref{Eq_form_Ki}
is not the same as the $k$ in \eqref{Eq131_1}.}
for some integers $c$ and $k$ satisfying $1\le c\le b-1$ and 
$0 \le k \le (b-1)\B(n)-2$.
Furthermore, if $b$ is odd then $c \le \frac{b-1}{2}$.
\end{thm}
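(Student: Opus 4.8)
The plan is to pin down the exponent of $\K_i(n)$ first, then bound $c$ and $k$ separately, using throughout the crucial fact (the footnote to~\eqref{EqRec}) that $F\bigl(c(b^{m}+1)+k\bigr)=F(k)+F((b-1)m-k-2)$ does not depend on $c$. Write $m_0:=\B(n)$ and recall from Theorem~\ref{Th131_1} that $\K(n)=b^{m_0}+1+k_0$ with $0\le k_0\le (b-1)m_0-2$, and that $k_0$ is even when $b$ is odd.

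First I would produce, for every $i\in\sI$, an explicit witness of residue $i$ with $F=n$ and exponent $m_0$. Keeping $k=k_0$ fixed and letting $c$ vary, set $u_i:=c_i(b^{m_0}+1)+k_0$; since $F$ is independent of $c$ this has $F(u_i)=F(\K(n))=n$, and $u_i\equiv 2c_i+k_0\pmod{b-1}$. The residue bookkeeping splits on the parity of $b$: when $b$ is even, $2$ is invertible modulo $b-1$, so a unique $c_i\in\{1,\dots,b-1\}$ solves $2c_i\equiv i-k_0$; when $b$ is odd, $b-1$ is even and $i-k_0$ is even, and as $c_i$ runs through $\{1,\dots,\tfrac{b-1}{2}\}$ the values $2c_i$ sweep out every even residue exactly once, so a unique $c_i\le\tfrac{b-1}{2}$ works. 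In either case one checks that $(m_0,c_i,k_0)$ is a legal triple in the sense of Lemma~\ref{Lem_mkc}, the only delicate case being $c_i=b-1$, where $k_0\le (b-1)m_0-2\le b^{m_0}-b+1$ follows from Lemma~\ref{lem_bound} (with the small exponent $m_0=1$ handled directly). This gives $\K_i(n)\le u_i$.

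Next I would extract the exponent and the bound on $c$ by comparison. Since $\K(n)\le\K_i(n)\le u_i$ and both $\K(n)$ and $u_i$ have exponent $m_0$, the monotonicity in Lemma~\ref{Lem_mkc}(ii) forces $\K_i(n)$ to have exponent exactly $m_0=\B(n)$. Writing $\K_i(n)=c^\ast(b^{m_0}+1)+k^\ast$ in the form~\eqref{EqStd}, the same monotonicity applied to $\K_i(n)\le u_i$ at the common exponent yields $c^\ast\le c_i$; in particular $c^\ast\le\tfrac{b-1}{2}$ when $b$ is odd, while $1\le c^\ast\le b-1$ holds automatically by Lemma~\ref{Lem_mkc}.

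Finally I would bound $k^\ast$ by a minimality argument. If $k^\ast>(b-1)m_0-2$, the second term of~\eqref{EqRec} has negative argument, so $F(\K_i(n))=F(k^\ast)=n$; but $k^\ast\le b^{m_0}<b^{m_0}+1\le\K(n)$, contradicting that $\K(n)$ is the smallest integer with $F=n$. Hence $0\le k^\ast\le (b-1)\B(n)-2$, which completes the proof. The main obstacle is the residue-class accounting in the construction step: making sure a single value $k=k_0$ can be retained while $c$ alone is steered into every class of $\sI$, and that for odd $b$ this can be arranged within the range $c\le\tfrac{b-1}{2}$. Once that is in place, everything downstream is a clean application of the monotonicity in Lemma~\ref{Lem_mkc}(ii), the $c$-independence of~\eqref{EqRec}, and the minimality of $\K(n)$.
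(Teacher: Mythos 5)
Your proof is correct and follows essentially the same route as the paper's: starting from $\K(n)=b^{\B(n)}+1+k_0$ (Theorem~\ref{Th131_1}), exploiting the $c$-independence of \eqref{EqRec} to build a witness $u_i=c_i(b^{\B(n)}+1)+k_0$ in each residue class with the same parity split (solving $2c_i+k_0\equiv i\pmod{b-1}$, giving $c_i\le\tfrac{b-1}{2}$ for odd $b$), then applying the monotonicity of Lemma~\ref{Lem_mkc}(ii) to $\K(n)\le\K_i(n)\le u_i$ to fix the exponent and bound $c$, and finishing with the negative-argument/minimality argument for $k$. Your explicit verification that $(\B(n),c_i,k_0)$ is a legal triple satisfying \eqref{Eqk} (via Lemma~\ref{lem_bound}) is a detail the paper leaves implicit rather than a difference in approach.
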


\begin{proof}
Theorem~\ref{Th131_1} states that $\K(n)=b^{\B(n)}+1+k'$ for some $k'$, 
and so we have $\F(b^{\B(n)}+1+k')=n$. 
Theorem~\ref{ThRec} (applied twice) implies that for {\em any} $c'$ 
and the same $k'$, 
$$
\F(c'(b^{\B(n)}+1)+k')=\F(k')+\F((b-1)\B(n)-k'-2) =
\F(b^{\B(n)}+1+k')  = n\,.
$$
Let us show that there exists a value $c' := c_i$ such that 
$c_i(b^{\B(n)}+1)+k'\equiv i\pmod{b-1}$. 
Since $c_i(b^{\B(n)}+1)+k'\equiv k'+2c_i\pmod{b-1}$, 
we want $k'+2c_i\equiv i\pmod{b-1}$.
For even $b$, this congruence is trivially solvable for $c_i$ 
in the interval 
$1\le c_i\le b-1$. For odd $b$, $i\in\sI$ is even and so is 
$k'$ (by Theorem~\ref{Th131_1}), so the congruence reduces to $\frac{k'}2+c_i\equiv \frac{i}2\pmod{\frac{b-1}2}$, which is solvable for $c_i$ 
in the interval $1\le c_i\le \frac{b-1}{2}$.

By the definition of $\K_i(n)$, we have
\beql{Eq_Ki_int_bounds}
b^{\B(n)}+1+k'=\K(n)\le \K_i(n)\le c_i(b^{\B(n)}+1)+k'.
\eeq
By Theorem~\ref{ThRec}, $\K_i(n) = c(b^m+1)+k$ for some 
integers $c,k$, $1\leq c\leq b-1$, $0\leq k\leq b^m$.
From the inequality~\eqref{Eq_Ki_int_bounds} and Lemma~\ref{Lem_mkc}, 
we conclude that $m=\B(n)$ and 
$c\leq c_i$, and thus $c\leq \frac{b-1}2$ if $b$ is odd.
We further apply~\eqref{EqRec} to obtain
$n = F(c(b^{\B(n)}+1)+k) = F(k)+F((b-1)\B(n)-k-2)$.
If $k>(b-1)\B(n)-2$, then $F(k)=n$ and thus $k\geq\K(n)=b^{\B(n)}+1+k'$, 
which contradicts $k\leq b^{\B(n)}$.
Hence $k\leq (b-1)\B(n)-2$.
\end{proof}

For bases $b=2$ and $3$, we have $\sI = \{0\}$,
so there is only one $\K_i(n)$, which is $\K_0(n) = \K(n)$.
For bases $b \ge 4$ and $n \le 3$, we can give $\K_i(n)$ explicitly.
In the following three theorems, the subscripts $i$ in $\K_i(n)$
are elements of $\sI$, and in particular are to be read modulo $b-1$.
We omit the proofs, which are similar to those of 
Theorems~\ref{ThK2} and~\ref{ThK3}. 
Table~\ref{TabKiEG} illustrates these theorems.

\begin{thm}\label{ThKi1}
For even $b \ge 2$,
\begin{align}\label{EqWind1}
\K_{2 \lambda}(1) &= 2\lambda  \quad \text{for}\quad
0 \le \lambda \le \frac{b-2}{2}, \nonumber \\
\K_{2 \lambda+1}(1) &= b+2\lambda \quad \text{for}\quad
0 \le \lambda \le \frac{b-4}{2}\,;
\end{align}
for odd $b \ge 3$,
\beql{EqWind1b}
\K_{2\lambda}(1) = 2\lambda \quad\text{for}\quad
0 \le \lambda \le \frac{b-3}{2}\,.
\eeq
\end{thm}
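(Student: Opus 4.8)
The plan is to prove both halves of Theorem~\ref{ThKi1} by the same two-step template used for Theorems~\ref{ThK2} and~\ref{ThK3}: for each claimed value $u$ I will (a) exhibit its unique generator and confirm $u \equiv i \pmod{b-1}$, thereby showing $F(u)=1$ and $\K_i(1) \le u$, and (b) rule out every smaller member of the residue class $i$, giving the matching lower bound. The only arithmetic input needed is that $f(v)=2v$ whenever $0 \le v \le b-1$ (since then $s(v)=v$), so the ``length-one'' image of $f$ is exactly the set of even numbers $0,2,4,\dots,2b-2$, each attained exactly once. The degenerate boundary cases $b=2$ and $b=3$ collapse to the single assertion $\K_0(1)=0$ and need no separate treatment.

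First I would dispose of the even-residue claims (the formula $\K_{2\lambda}(1)=2\lambda$, valid for both parities of $b$). Here $u=2\lambda$ satisfies $u<b$ throughout the stated range, since $2\lambda \le b-2$ for even $b$ and $2\lambda \le b-3$ for odd $b$. Consequently any generator $v$ obeys $v \le u-1 < b$ by Lemma~\ref{Lem2}, which forces $s(v)=v$ and hence $2v=u$; thus $v=\lambda$ is the unique generator and $F(2\lambda)=1$, the case $\lambda=0$ being the base value $F(0)=1$. Because $0 \le 2\lambda < b-1$, the number $2\lambda$ is the least nonnegative element of its class modulo $b-1$, so minimality is automatic and $\K_{2\lambda}(1)=2\lambda$.

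The substantive case is the odd residues for even $b$, where I must show $\K_{2\lambda+1}(1)=b+2\lambda$ for $0 \le \lambda \le \frac{b-4}{2}$. Reducing modulo $b-1$ gives $b+2\lambda \equiv 2\lambda+1$, so the residue is correct, and the only element of this class below $b+2\lambda$ is $2\lambda+1$ itself, the two differing by exactly $b-1$. Since $2\lambda+1$ is odd with $2\lambda+1 \le b-3 < b$, Lemma~\ref{Lem1}(ii) gives $F(2\lambda+1)=0$, which settles the lower bound $\K_{2\lambda+1}(1) \ge b+2\lambda$. For the upper bound I exhibit the generator $v=\tfrac{b}{2}+\lambda$, which lies in $[0,b-1]$ because $\tfrac{b}{2}+\lambda \le b-2$, so $f(v)=2v=b+2\lambda$.

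The one step requiring genuine checking, and the place where I expect the real work to sit, is the \emph{uniqueness} of this generator, that is, confirming $F(b+2\lambda)=1$ rather than merely $F \ge 1$. Because $b \le b+2\lambda \le 2b-4 < b^2$, a generator has length at most two, so I would enumerate both lengths directly. A length-one generator must solve $2v=b+2\lambda$, yielding only $v=\tfrac{b}{2}+\lambda$; a length-two generator $v=[v_1 v_0]_b$ has $f(v)=v_1(b+1)+2v_0$, and the subcase $v_1=1$ forces $2v_0=2\lambda-1$, which is impossible, while $v_1 \ge 2$ gives $f(v) \ge 2(b+1) > 2b-4 \ge b+2\lambda$. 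Hence no second generator exists and $F(b+2\lambda)=1$, completing the argument; all remaining bookkeeping about the precise ranges of $\lambda$ is routine and mirrors the reasoning in Theorem~\ref{ThK3}.
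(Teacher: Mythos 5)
Your proposal is correct, and it matches the approach the paper intends: the paper actually omits the proof of Theorem~\ref{ThKi1}, remarking only that it is ``similar to those of Theorems~\ref{ThK2} and~\ref{ThK3},'' and your argument---exhibiting the unique single-digit generator $v=\lambda$ (resp.\ $v=\tfrac{b}{2}+\lambda$), invoking Lemma~\ref{Lem1}(ii) to kill the smaller residue-class candidate $2\lambda+1$, and ruling out two-digit generators by the parity/size check on $f(v)=v_1(b+1)+2v_0$---is precisely that style of direct verification. In effect you have supplied the omitted proof, correctly and with the ranges of $\lambda$ and the degenerate cases $b=2,3$ handled properly.
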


\begin{thm}\label{ThKi2}
For even $b \ge 2$,
\beql{EqWindKi2e}
\K_{2 + 2 \lambda}(2) = b^2 + 1 + 2\lambda\quad\text{for}\quad 
0 \le \lambda \le b-2 \,;
\eeq
for odd $b \ge 3$,
\beql{EqWindKi2d}
\K_{2 + 2 \lambda}(2) = b + 1 + 2\lambda\quad\text{for}\quad
0\le \lambda \le \frac{b-3}{2}\,.
\eeq
\end{thm}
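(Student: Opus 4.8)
The plan is to prove each of \eqref{EqWindKi2e} and \eqref{EqWindKi2d} by matching an explicit construction (upper bound) against the structural constraint of Theorem~\ref{Th_form_Ki} (lower bound), following the template of Theorems~\ref{ThK2} and~\ref{ThK3} but now keeping track of the residue modulo $b-1$. The one book-keeping fact I would isolate at the outset is the congruence $u\equiv 2c+k\pmod{b-1}$ for any $u=c(b^{m}+1)+k$ written in the form \eqref{EqStd}; this holds because $b\equiv1\pmod{b-1}$, so $b^{m}+1\equiv2$. Thus the residue class of $u$ is controlled entirely by $2c+k$, which is what lets us target a prescribed $i=2+2\lambda$.

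For the upper bound I would simply exhibit the claimed minimizer and verify its two defining properties. In the even case take $u=b^{2}+1+2\lambda$, i.e.\ $c=1$, $m=\B(2)=2$ (by Theorem~\ref{ThK2}), and $k=2\lambda$; the congruence above gives $u\equiv 2+2\lambda=i$, and Theorem~\ref{ThRec} gives $F(u)=F(2\lambda)+F\bigl(2(b-1)-2\lambda-2\bigr)=F(2\lambda)+F\bigl(2(b-2-\lambda)\bigr)$. Both arguments are even and lie in $[0,2b-4]$, and a one-line check (the single-digit preimage is the unique generator, since any two-digit generator $[1\,v_0]_b$ yields the odd value $b+1+2v_0$ while larger leading digits overshoot $2b-2$) shows each term equals $1$, so $F(u)=2$ and hence $\K_i(2)\le b^{2}+1+2\lambda$. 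The odd case is identical with $m=\B(2)=1$ and $u=b+1+2\lambda$, the split now reading $F(2\lambda)+F(b-3-2\lambda)$.

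For the lower bound I would invoke Theorem~\ref{Th_form_Ki}, which forces $\K_i(2)=c(b^{\B(2)}+1)+k$ with $1\le c$ (and $c\le\tfrac{b-1}{2}$ when $b$ is odd) and $0\le k\le (b-1)\B(2)-2$. Because $b^{\B(2)}+1$ exceeds the entire admissible range of $k$, Lemma~\ref{Lem_mkc}(ii) forces the minimizer to have $c=1$ as soon as some valid $k$ exists there; one then minimizes $k$ over the class $k\equiv i-2=2\lambda\pmod{b-1}$ subject to $F(k)+F\bigl((b-1)\B(2)-k-2\bigr)=2$. In the odd case this is immediate, since $0\le 2\lambda\le b-3<b-1$ leaves $k=2\lambda$ as the only representative in range. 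In the even case the only conceivably smaller representative is $k'=2\lambda-(b-1)$ (arising when $\lambda\ge b/2$), which is odd; here I would use Lemma~\ref{Lem1} together with the small-number count of the previous paragraph to show $F(b^{2}+1+k')=F(k')+F(2b-4-k')\in\{0,1\}$, so $k'$ is rejected and $k=2\lambda$ is indeed least.

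The main obstacle is the last step: disqualifying the smaller odd candidate $k'=2\lambda-(b-1)$ in the even-$b$ case. This hinges on an accurate, though elementary, determination of $F$ on odd integers below $2b$ --- namely $F(w)=0$ for odd $w<b$ and $F(w)=1$ for odd $w$ with $b<w<2b$ --- which follows from the parity statement of Lemma~\ref{Lem1} and the fact that an odd $w\in(b,2b)$ has the unique generator $[1\,,(w-b-1)/2]_b$. Everything else is routine residue arithmetic and bounded-range application of the recurrence \eqref{EqRec}.
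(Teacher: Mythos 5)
Your proof is correct. Note, however, that the paper never prints a proof of this theorem: it explicitly omits the proofs of Theorems~\ref{ThKi1}--\ref{ThKi3}, saying only that they are ``similar to those of Theorems~\ref{ThK2} and~\ref{ThK3}'', i.e., exhibit the generators for the upper bound and verify by a direct hand-check that no smaller number in the residue class has two generators. Your upper bound matches that template in substance (applying \eqref{EqRec} with $c=1$, $m=\B(2)$, $k=2\lambda$ is equivalent to exhibiting the two generators via \eqref{EqGen}), but your lower bound takes a genuinely more structural route: instead of searching all $u<b^{\B(2)}+1+2\lambda$ with $u\equiv 2+2\lambda\pmod{b-1}$, you invoke Theorem~\ref{Th_form_Ki} to force $\K_{2+2\lambda}(2)=c(b^{\B(2)}+1)+k$, play Lemma~\ref{Lem_mkc}(ii) against your own upper bound to get $c=1$ and $k\le 2\lambda$, and then the congruence $u\equiv 2c+k\pmod{b-1}$ leaves at most two candidates for $k$; the smaller one, $k'=2\lambda-(b-1)$ (which only arises for even $b$), is odd, and Lemma~\ref{Lem1} together with \eqref{EqRec} shows it would give $F\le 1$, a contradiction. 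This is legitimate --- Theorem~\ref{Th_form_Ki} is proved before, and independently of, Theorems~\ref{ThKi1}--\ref{ThKi3}, so there is no circularity --- and it buys a short, essentially case-free argument that foreshadows how the paper itself reasons later (compare part (III) of the proof of Theorem~\ref{ThBB} and the proof of Theorem~\ref{ThS2}, which use exactly this interplay of Theorem~\ref{Th_form_Ki}, Lemma~\ref{Lem_mkc}, and \eqref{EqRec}). What the paper's sketched approach buys instead is independence from the heavier machinery: a reader could verify Theorem~\ref{ThKi2} from scratch by elementary enumeration, whereas your argument leans on Theorem~\ref{Th_form_Ki}; both are sound.
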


\begin{thm}\label{ThKi3}
For even $b \ge 4$,
\beql{EqWindKi3e}
\K_0(3) = b^{b+3}+b^2+2b-4\,,
\eeq
\beql{EqWindKi3e2}
\K_{2 + 2 \lambda}(3) = b^{b+3} + 1 + 2\lambda\quad\text{for}\quad
0 \le \lambda \le b-3\,;
\eeq
for odd $b \ge 5$,
\beql{EqWindKi3d}
\K_0(3) = b^2+2b-3\,,
\eeq
\beql{EqWindKi3d2}
\K_{2 + 2 \lambda}(3) = b^2 + 1 + 2\lambda\quad\text{for}\quad
0 \le \lambda \le \frac{b-5}{2}\,.
\eeq
\end{thm}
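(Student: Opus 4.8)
The plan is to handle Theorem~\ref{ThKi3} by the same machine used for Theorems~\ref{ThK2} and~\ref{ThK3}, now fed through the refined representation of Theorem~\ref{Th_form_Ki}: reduce each $\K_i(3)$ to a one-parameter search, apply the recurrence~\eqref{EqRec}, and then locate the minimizing low-order part using the already-computed values $\K(2)$, $\K_i(2)$, and $\K_i(1)$ together with the parity and self-number facts of Lemma~\ref{Lem1}.

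First I would apply Theorem~\ref{Th_form_Ki} with $n=3$ to write $\K_i(3)=c(b^{\B(3)}+1)+k$, where Theorem~\ref{ThK3} gives $\B(3)=b+3$ for even $b$ and $\B(3)=2$ for odd $b$. Because the block $c\,b^{\B(3)}$ dominates and, by Lemma~\ref{lem_bound}, the entire admissible range $0\le k\le(b-1)\B(3)-2$ is shorter than the jump $b^{\B(3)}+1$ incurred in passing from $c=1$ to $c=2$, the minimizer has $c=1$ for every $i$ (the congruence $2+k\equiv i\pmod{b-1}$ is solvable in $k$ since $k$ sweeps a complete residue system). Setting $M:=(b-1)\B(3)-2$ and applying~\eqref{EqRec}, the task becomes: minimize $k$ subject to $k\equiv i-2\pmod{b-1}$, the range bound, the evenness of $k$ when $b$ is odd, and $F(k)+F(M-k)=3$.

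The condition $F(k)+F(M-k)=3$ forces $\{F(k),F(M-k)\}=\{1,2\}$. The branch producing small $k$ is the one with $F(k)=1$ and $M-k$ a junction number, and here I would take $M-k$ to be a value of the form $\K_j(2)$, whose complete low list is supplied by Theorem~\ref{ThKi2}. A short computation shows that for the residues $i=2+2\lambda$ the choice $k=2\lambda$ makes $M-k$ land exactly on such a $\K_j(2)$, so $F(M-k)=2$, while $F(2\lambda)=1$ because $\lambda$ is itself a single-digit generator of $2\lambda$ and $2\lambda<\K(2)$; this gives $\K_{2+2\lambda}(3)=b^{\B(3)}+1+2\lambda$ as in~\eqref{EqWindKi3e2} and~\eqref{EqWindKi3d2}. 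Minimality follows because the only smaller candidate in the class, $2\lambda-(b-1)$ when nonnegative, is odd and (for even $b$) below $b$, so Lemma~\ref{Lem1} assigns it $F=0$ and kills it. The residue $i=0$ is genuinely exceptional: there $M-k\equiv0\pmod{b-1}$, and the smallest junction number in that class, $\K_0(2)$, already exceeds $M$ (by Theorem~\ref{ThKi2}), so the small branch is infeasible and the minimum is forced into the other branch with $k=M$, $F(M)=2$, $F(0)=1$, producing the special values~\eqref{EqWindKi3e} and~\eqref{EqWindKi3d}. The odd-$b$ statements come out identically with $\B(3)=2$ and $M=2b-4$.

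I expect the main obstacle to be the minimality bookkeeping rather than any single computation: one must certify, residue class by residue class, that no admissible $k$ below the claimed value survives, excluding each smaller candidate either through Lemma~\ref{Lem1} (parity, or $F(k)=0$ for odd $k<b$) or because the complementary term $F(M-k)$ then fails to be a junction value so that the sum stays below $3$. The residue $0$, where the answer jumps by a full period $b-1$, is the delicate case and must be argued separately for the two parities of $b$. Finally I would record the routine verification that the residues $2+2\lambda$ over the stated $\lambda$-ranges, together with $0$, exhaust $\sI$, so that the formulas do describe $\K_i(3)$ for every $i\in\sI$.
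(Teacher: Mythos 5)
Your proposal is correct and takes essentially the approach the paper intends: the paper omits the proof of Theorem~\ref{ThKi3}, saying only that it is similar to those of Theorems~\ref{ThK2} and~\ref{ThK3}, and your argument---the representation from Theorem~\ref{Th_form_Ki}, the recurrence~\eqref{EqRec} forcing the split $\{F(k),F(M-k)\}=\{1,2\}$, the $n\le 2$ values from Theorems~\ref{ThKi1}--\ref{ThKi2}, and the parity exclusions of Lemma~\ref{Lem1}---is precisely that template carried out class by class, including the correct identification of $i=0$ as the exceptional class where $\K_0(2)>M$ forces $k=M$. One step to tighten when writing it up: ``$2\lambda<\K(2)$'' rules out $F(2\lambda)=2$ (and $2\lambda<\K(3)$ rules out $3$) but not $F(2\lambda)\ge 4$, so to conclude $F(2\lambda)=1$ you should either invoke the injectivity check contained in the proof of Theorem~\ref{ThK2} (every $u<\K(2)$ has at most one generator) or note directly that all generators of $2\lambda$ are smaller than $2b$, where $f$ is injective on $[0,b)$ and on $[b,2b)$ separately.
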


\begin{table}[htb]
$$
\begin{array}{|c|ccc||c|ccc|}
\hline
 \multicolumn{4}{|c||}{b=6} &  \multicolumn{4}{|c|}{b=9} \\
\hline
i \setminus n  & 1 & 2 & 3 & i \setminus n & 1 & 2 & 3 \\
\hline
0 & 0 & 45 & 6^9+44 & 0 & 0 & 16 & 96 \\
1 & 6 & 41 & 6^9+5  & 2 & 2 & 10 & 82 \\
2 & 2 & 37 & 6^9+1  & 4 & 4 & 12 & 84 \\
3 & 8 & 43 & 6^9+7  & 6 & 6 & 14 & 86 \\
4 & 4 & 39 & 6^9+3  &   &   &    &    \\
\hline
\end{array}
$$
\caption{Values of $\K_i(n)$ ($n \le 3$) for
bases $b=6$ (left) and $b=9$ (right), illustrating
Theorems \ref{ThKi1}-\ref{ThKi3}. }
\label{TabKiEG}
\end{table}

For $b \ge 4$, the minimal values of $\K_i(2)$ and $\K_i(3)$
occur when $\lambda = 0$, that is, when $i=2$, and confirm (via \eqref{EqGTM})
the values of $\K(2)$ and $\K(3)$ given in Theorems \ref{ThK2} and~\ref{ThK3}.

The following bounds on $\K_i(n)$ will be used in
the proof of Theorem \ref{ThBB}.


\begin{thm}\label{Th_Ki_bounds}
For any $b \ge 2$, $i\in\sI$, and $n \ge 2$, we have\,\footnote{From Theorem~\ref{Th131_1} it follows that $\K(n) \leq 2b^{\B(n)}$, which is a stronger upper bound for $\K(n)$ when $b\geq 4$.}
$$
b^{\B(n)} ~<~ \K(n) ~\le~ \K_i(n) ~<~ \beta \,b^{\B(n)+1}\,, 
$$
where $\beta := 1$ if $b$ is even, and $\beta := \frac{1}{2}$ if $b$ is odd.
\end{thm}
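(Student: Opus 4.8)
The plan is to establish the three inequalities in turn, the first two being immediate and the third carrying essentially all the work.

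For the lower bound $b^{\B(n)} < \K(n)$, I would simply invoke Theorem~\ref{Th131_1}, which gives $\K(n) = b^{\B(n)}+1+k$ with $k \ge 0$, so that $\K(n) \ge b^{\B(n)}+1 > b^{\B(n)}$. The middle inequality $\K(n) \le \K_i(n)$ is just the defining relation $\K(n) = \min_{i \in \sI} \K_i(n)$ recorded in \eqref{EqGTM}. So both reduce to a one-line citation.

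The real content is the upper bound $\K_i(n) < \beta\,b^{\B(n)+1}$. Here I would start from the representation supplied by Theorem~\ref{Th_form_Ki}, namely $\K_i(n) = c(b^{\B(n)}+1)+k$ with $0 \le k \le (b-1)\B(n)-2$ and $1 \le c \le b-1$, where moreover $c \le \tfrac{b-1}{2}$ when $b$ is odd. The key observation is that in both parities the bound on $c$ can be written uniformly as $c \le \beta(b-1)$. Writing $m := \B(n)$ and bounding $c$ and $k$ by their maximal admissible values, the target inequality $c(b^m+1)+k < \beta b\cdot b^m$ reduces, after cancelling $\beta(b-1)b^m$ from both sides, to the scalar inequality $\beta(b-1)+(b-1)m-2 < \beta\,b^m$. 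For even $b$ (so $\beta=1$) this reads $(b-1)m+b-3 < b^m$, which follows by adding $b-1$ to the inequality $(b-1)m-2 \le b^m-b+1$ of Lemma~\ref{lem_bound} (easily seen to be strict for $m \ge 2$). For odd $b$ (so $\beta=\tfrac12$) it becomes $2(b-1)m+b-5 < b^m$, a routine exponential-beats-linear estimate that I would check directly at $m=2$ (it amounts to $b^2-5b+9>0$) and propagate to larger $m$ by a one-line induction.

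Both scalar inequalities require $m = \B(n) \ge 2$, so the remaining task is to control the exponent, and this is where I expect the main obstacle. I would first record that $\B(n) \ge 2$ for all $n \ge 3$: if instead $\B(n)=1$, then $\K(n)=b+1+k$ with $k \le b-3$, and \eqref{EqRec} gives $F(\K(n)) = F(k)+F(b-3-k) \le 1+1$, since each of $k$ and $b-3-k$ is nonnegative and less than $b$ and hence has at most one generator, contradicting $n \ge 3$. Together with $\B(2)=2$ for even $b$ (Theorem~\ref{ThK2}), this covers every case except $b$ odd with $n=2$, where $\B(2)=1$. In that single edge case the uniform argument genuinely breaks down: substituting $c \le \tfrac{b-1}{2}$ and $k \le b-3$ into $\K_i(2)=c(b+1)+k$ overshoots $\tfrac12 b^2$, precisely because the true value of $c$ is $1$ rather than $\tfrac{b-1}{2}$. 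I would therefore dispatch $b$ odd, $n=2$ separately using the explicit formula $\K_{2+2\lambda}(2)=b+1+2\lambda$ of Theorem~\ref{ThKi2}, for which the required bound $\K_i(2) \le 2b-2 < \tfrac12 b^2$ is equivalent to $(b-2)^2>0$.
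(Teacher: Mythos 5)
Your proposal is correct, and its skeleton is the same as the paper's: the first two inequalities are read off from Theorem~\ref{Th131_1}/Theorem~\ref{Th_form_Ki} and \eqref{EqGTM}, and the upper bound comes from the representation $\K_i(n)=c(b^{\B(n)}+1)+k$ with $c$ and $k$ replaced by their maximal admissible values, reducing everything to a linear-versus-exponential scalar inequality. Two of your refinements are genuine improvements over the paper's own write-up. First, the exponent bound: the paper justifies $m=\B(n)\geq 2$ (and $m\geq 3$ for $b=3$) by citing Table~\ref{TabK1} and Theorem~\ref{ThKi3}, which strictly speaking only cover small $n$; pushing this to all $n\geq 3$ would need monotonicity of $\B$, which is proved only later (inequality \eqref{EqS4} of Theorem~\ref{ThBB}, whose proof uses the present theorem). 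Your direct argument --- if $\B(n)=1$ then \eqref{EqRec} forces $n=F(k)+F(b-3-k)\leq 2$, because integers below $b$ have at most one generator --- is self-contained and removes that circularity; note the paper's even-$b$ case silently needs $m\geq 2$ as well (at $m=1$ the bound fails for even $b\geq 4$), which your argument also covers. Second, your uniform estimate $c\leq\beta(b-1)$ leading to $\beta(b-1)+(b-1)m-2<\beta\,b^m$ is sharper than the paper's intermediate bound $\K_i(n)<\tfrac{b-1}{2}(b^m+2m+1)$: yours already holds at $b=3$, $m=2$ (it reads $6<9$), whereas the paper's needs $m\geq 3$ when $b=3$ and hence must special-case base $3$. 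Your handling of the genuine edge case (odd $b$, $n=2$, via Theorem~\ref{ThKi2}) matches the paper's. One small point to spell out when writing this up: the strictness of $(b-1)m+b-3<b^m$ for $m\geq 2$ does not follow verbatim from Lemma~\ref{lem_bound}, which is stated with ``$\leq$''; your parenthetical verification at $m=2$ (the discriminant of $b^2-3b+5$ is negative) together with the one-line induction is the correct justification, so it should appear explicitly rather than as an aside.
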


\begin{proof}
The lower bound for any $b$ and the upper bound for an even $b$ follow 
directly from Theorem~\ref{Th_form_Ki}. Let us prove that for odd $b$, 
$\K_i(n) \leq \frac{1}{2}b^{\B(n)+1}$.

For $n=2$, the bound can be verified directly using Table~\ref{TabK1} 
(for $b=3$) and Theorem~\ref{ThKi2} (for odd $b\geq 5$). Suppose $n\geq 3$.
By Theorem~\ref{Th_form_Ki}, we have $\K_i(n) = c(b^m+1)+k$, where $m=\B(n)$, 
$1\le c\le \frac{b-1}{2}$ and $0 \le k \le (b-1)m-2$. 
Furthermore, for $b=3$ we have $m\geq 3$ (see Table~\ref{TabK1}); 
while for odd $b\geq 5$, we have $m\geq 2$ by
Theorem~\ref{ThKi3}.
Hence
$$
\K_i(n) = c(b^m+1)+k \leq 
\frac{b-1}{2}(b^m+1) + (b-1)m-2 < \frac{b-1}{2}(b^m + 2m + 1)\,.
$$
It is easy to check that for $b=3$, $m\geq 3$ and $b\geq 5$, $m\geq 2$, 
we have $2m+1\leq \frac{1}{b-1}b^m$, implying that
$$
\K_i(n) < \frac{b-1}{2}\left(b^m + \frac{1}{b-1}b^m\right) =
\frac{1}{2}b^{m+1}
$$
as required.
\end{proof}



\section{\texorpdfstring{$\K(n)$}{K(n)} for bases \texorpdfstring{$2$}{2} 
and \texorpdfstring{$3$}{3}}\label{SecB2}

\begin{table}[htbp]
$$
\begin{array}{|r|rrrrrrrrrrrrrrrrrrr|}
\hline
u  & 0 & 1 & 2 & 3 & 4 & 5 & 6 & 7 & 8 & 9 & 10 & 11 & 12 & 13 & 14 & 15 & 16 & 17 & 18 \\
\hline
f(u) & 0 & 2 & 3 & 5 & 5 & 7 & 8 & 10 & 9 & 11 & 12 & 14 & 14 & 16 & 17 & 19 & 17 & 19 & 20 \\
F(u) & 1 & 0 & 1 & 1 & 0 & 2 & 0 & 1 & 1 & 1 & 1 & 1 & 1 & 0 & 2 & 0 & 1 & 2 & 0 \\
\hline
\end{array}
$$
\caption{Values of $f(u)$ and $F(u)$ in base $2$ 
(\seqnum{A092391}\label{A092391}, 
\seqnum{A228085}\label{A228085})}
\label{TabB2a}
\end{table}

We first discuss the base $2$ case for general $n$.
The initial values of $f(u)$ and $F(u)$ are shown in Table~\ref{TabB2a}.
We see that the smallest numbers with 1 and 2 
generators are $\K(1)=0$ and $\K(2)=5$, respectively.
Direct search by computer gives $\K(3)=129$ and $\K(4)=4102$,
as we have already seen in Table~\ref{TabK1}
(although $\K(5)= 2^{136}+6$ is out of reach).
The general solution is given by the following pair of recurrences.


\begin{thm}\label{ThBase2}
For $b=2$ and any $n \ge 2$, we have
\beql{EqB2}
\B(n) ~=~
\K\left(\Ceil{ \tfrac{n}{2} }\right)  +
 \K\left(\Floor{ \tfrac{n}{2} }\right) + 2
\eeq
and
\beql{EqKK2}
\K(n) ~=~ 2^{\B(n)} + 1 + \K\left(\Floor{ \tfrac{n}{2} }\right)\,.
\eeq
Also 
\beql{EqI2}
\K(n) ~>~ 2\K(n-1)\,.
\eeq
\end{thm}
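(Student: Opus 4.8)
The plan is to prove the three assertions \eqref{EqB2}, \eqref{EqKK2}, and \eqref{EqI2} simultaneously by strong induction on $n$, with $n\le 4$ handled directly from Table~\ref{TabK1} (where $\K(2)=5$, $\K(3)=129$, $\K(4)=4102$). The engine is the base-$2$ specialization of Theorem~\ref{ThRec}: since $c=1$ is forced when $b=2$, every $u>2$ is written uniquely as $u=2^m+1+k$ with $F(u)=F(k)+F(m-k-2)$. Combined with Theorem~\ref{Th131_1}, this means $\K(n)=2^{\B(n)}+1+k$ for some $0\le k\le \B(n)-2$ satisfying $F(k)+F(\B(n)-k-2)=n$, so determining $\K(n)$ reduces to minimizing first the exponent $m$ and then the low-order part $k$ over all representations $u=2^m+1+k$ with $F(k)+F(m-k-2)=n$.

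First I would establish \eqref{EqB2} and \eqref{EqKK2} in the inductive step. Writing $j:=m-k-2\ge 0$, I minimize $m=k+j+2$ subject to $F(k)+F(j)=n$. A preliminary observation removes the degenerate splits: in a minimizing representation both $F(k)\ge 1$ and $F(j)\ge 1$, since if, say, $F(k)=0$ then $j$ would be a number with $n$ generators and hence $j\ge\K(n)$, yet $j\le m-2<m<2^m<\K(n)$, a contradiction requiring no appeal to \eqref{EqI2}. With both parts positive, the smallest $k$ (resp.\ $j$) realizing a split $n=a+c$ is $\K(a)$ (resp.\ $\K(c)$), so $m-2=\min_{a+c=n,\,a,c\ge 1}\bigl(\K(a)+\K(c)\bigr)$. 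Here I invoke Lemma~\ref{Lem4}(i) for the sequence $\K$: its hypothesis $\K(p+1)\ge 2\K(p)$ for $p\le n-2$ is exactly \eqref{EqI2} at indices below $n$, supplied by the induction hypothesis, and since $\K(2)=5>0$ the minimum is attained only at the balanced split. This gives $\B(n)=\K\bigl(\Ceil{\tfrac{n}{2}}\bigr)+\K\bigl(\Floor{\tfrac{n}{2}}\bigr)+2$, which is \eqref{EqB2}; the uniqueness clause of Lemma~\ref{Lem4}(i) then forces any representation with $m=\B(n)$ to have $\{F(k),F(j)\}=\bigl\{\Floor{\tfrac{n}{2}},\Ceil{\tfrac{n}{2}}\bigr\}$ with $k,j$ the smallest generators of their counts, so the smallest admissible $k$ is $\K\bigl(\Floor{\tfrac{n}{2}}\bigr)$, yielding \eqref{EqKK2}.

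Finally I would deduce \eqref{EqI2} at $n$ from the formulas just proved at $n$ and (by induction) at $n-1$. A parity analysis shows $\B(n)-\B(n-1)$ equals $\K(t)-\K(t-1)$ when $n=2t$ and $\K(t+1)-\K(t)$ when $n=2t+1$; for $n\ge 5$ this difference is $\K(s)-\K(s-1)$ with $s\ge 3$, which by the induction hypothesis exceeds $\K(s-1)\ge\K(2)=5$, so in particular $\B(n)\ge\B(n-1)+2$. Substituting $\K(n)=2^{\B(n)}+1+\K\bigl(\Floor{\tfrac{n}{2}}\bigr)$ and $2\K(n-1)=2^{\B(n-1)+1}+2+2\K\bigl(\Floor{\tfrac{n-1}{2}}\bigr)$, the difference $\K(n)-2\K(n-1)$ splits into an exponential surplus $2^{\B(n)}-2^{\B(n-1)+1}\ge 2^{\B(n-1)+1}$ plus a bounded remainder of size $O\!\bigl(\K(\Ceil{\tfrac{n}{2}})\bigr)$, and the surplus dominates comfortably. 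The main obstacle is precisely the interlocking of the statements: \eqref{EqI2} is exactly what Lemma~\ref{Lem4} needs in order to pin down the formulas, yet it is itself the last thing proved. The whole art is to order the strong induction so that every invocation of \eqref{EqI2} lands at an index strictly below $n$, and to verify that the exponential surplus gives a \emph{strict} inequality (the borderline $\B(n)=\B(n-1)+1$ occurring only for the base cases $n\in\{2,4\}$).
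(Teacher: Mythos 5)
Your proposal is correct and follows essentially the same route as the paper: induction on $n$, the representation $\K(n)=2^{\B(n)}+1+k$ from Theorem~\ref{Th131_1} together with the recurrence \eqref{EqRec}, Lemma~\ref{Lem4}(i) applied with the exponential-type hypothesis supplied by \eqref{EqI2} at smaller indices, and Lemma~\ref{Lem_mkc}(ii) to pin down $\B(n)$ and $k=\K\left(\Floor{\tfrac{n}{2}}\right)$. Your write-up in fact supplies two details the paper glosses over — the exclusion of degenerate splits with $F(k)=0$, and the ``routine calculation'' establishing \eqref{EqI2} via the bound $\B(n)\ge\B(n-1)+2$ — both of which you handle correctly.
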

\begin{proof}
The proof is by induction on $n$. The results are true for $n \le 3$,
so we assume $n \ge 4$. As in Theorem~\ref{Th131_1}, let
$u:=\K(n) = 2^{\m}+1+k$,
where $\m:=\B(n)$ and $0 \le k \le {\m}-2$.
By \eqref{EqRec}, 
$n = \F(u) = \F(k)+\F({\m}-k-2)$.
Let $\x:=\F(k)$, $\y:=\F({\m}-k-2)$ so that $\x + \y =n$.
Then $k \ge \K(\x)$, ${\m}-k-2 \ge \K(\y)$, and thus
\beql{Eq105}
{\m} \ge \K(\x)+\K(\y)+2\,.
\eeq
We know from \eqref{EqI2} that the sequence $\left(\K(n)\right)_{n\geq 1}$ is of exponential type,
so by Lemma~\ref{Lem4} the right-hand side
of \eqref{Eq105} is minimized only when either
$\x=\Ceil{ \tfrac{n}{2} }$, $\y=\Floor{ \tfrac{n}{2} }$
or
$\x=\Floor{ \tfrac{n}{2} }$, $\y=\Ceil{ \tfrac{n}{2} }$
(there is no difference if $n$ is even).
From Lemma~\ref{Lem_mkc}(ii), it follows that the value of $\B(n)$ is given by \eqref{EqB2}, and that $k$ equals the smaller of
$\K(\Ceil{ \tfrac{n}{2} })$ and $\K(\Floor{ \tfrac{n}{2} })$, which is $\K(\Floor{ \tfrac{n}{2} })$ by induction and \eqref{EqI2}. This proves \eqref{EqKK2}.
The proof of \eqref{EqI2} is now a routine calculation; we omit the details.
\end{proof}

\noindent \textbf{Remark.} The proof of Theorem~\ref{ThBase2} also shows that
\begin{align}\label{EqGen2}
\Gen&(\K(n)) ~=~ \nonumber \\
& \{ 2^{\B(n)} + v \mid v \in 
 \Gen \left( \K \left(\Floor{ \tfrac{n}{2} } \right)\right) \}
~\cup~
\{ 2^{\B(n)} - 1 - v \mid v \in 
\Gen \left(\K \left(\Ceil{ \tfrac{n}{2} } \right) \right) \}\,.
\end{align}

Table \ref{TabK2a} extends the $b=2$ column of Table \ref{TabK1}
to $n=16$. (The first 100 terms of $\B(n)$ and $\K(n)$
are given in the entries  \seqnum{A230302}\label{A230302} 
and \seqnum{A230303}\label{A230303} in \cite{OEIS}).

\begin{table}[htb]
$$
\begin{array}{|c|c|c|}
\hline
n       &  \B(n) & \K(n)    \\
\hline
8 & 8206 & 2^{8206}+4103  \\
9 & 2^{136}+4110 & 2^{\B(9)}+4103  \\
10 & 2^{137}+14 & 2^{\B(10)}+2^{136}+7  \\
11 & 2^{260}+2^{136}+138 & 2^{\B(11)}+2^{136}+7  \\
12 & 2^{261}+262 & 2^{\B(12)}+2^{260}+131    \\
13 & 2^{4233}+2^{260}+262 & 2^{\B(13)}+2^{260}+131    \\
14 & 2^{4234}+262 & 2^{\B(14)}+2^{4233}+131    \\
15 & 2^{8206}+2^{4233}+4235 & 2^{\B(15)}+2^{4233}+131    \\
16 & 2^{8207}+8208 & 2^{\B(16)}+2^{8206}+4104    \\
\hline
\end{array}
$$
\caption{ Base $2$: $\B(n)$ and $\K(n)$ for $n=8, \ldots, 16$,
extending Table \ref{TabK1}.}
\label{TabK2a}
\end{table}


There is a similar pair of recurrences in the base $3$ case.


\begin{thm}\label{ThBase3}
For $b=3$ and any $n \ge 2$, we have
\beql{EqB3}
\B(n) ~=~  \frac{ 
\K\left(\Ceil{ \tfrac{n}{2} }\right) +
\K\left(\Floor{ \tfrac{n}{2} }\right) +2 }{2}
\eeq
and
\beql{EqKK3}
\K(n) ~=~ 3^{\B(n)} + 1 + \K\left(\Floor{ \tfrac{n}{2} }\right)\,.
\eeq
Also 
\beql{EqI3}
\K(n) ~>~ 3\K(n-1)\,.
\eeq
\end{thm}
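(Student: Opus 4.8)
The plan is to mirror the base-$2$ argument of Theorem~\ref{ThBase2} almost verbatim, adjusting only for the factor that base $3$ is odd. By Lemma~\ref{Lem1}(i), in base $3$ every value $f(v)$ is even, so $F(u)=0$ for odd $u$; in particular $\K(n)$ is always even, and this parity constraint is what produces the extra division by $2$ in~\eqref{EqB3}. I would proceed by induction on $n$, taking as the base case the values for $n \le 3$ from Table~\ref{TabK1} (where $\K(2)=4$, $\K(3)=28$), and assuming $n \ge 4$.

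First I would invoke Theorem~\ref{Th131_1} to write $u := \K(n) = 3^{\m}+1+k$ with $\m := \B(n)$ and $0 \le k \le 2\m-2$; by part~(ii) both $\K(n)$ and $k$ are even. The recurrence~\eqref{EqRec} with $b=3$ gives
\beql{EqPlanRec3}
n ~=~ F(u) ~=~ F(k) + F(2\m-k-2)\,.
\eeq
Setting $\x := F(k)$ and $\y := F(2\m-k-2)$, so $\x+\y=n$, minimality of $\K(n)$ forces $k \ge \K(\x)$ and $2\m-k-2 \ge \K(\y)$, whence
\beql{EqPlan105_3}
2\m ~\ge~ \K(\x)+\K(\y)+2\,.
\eeq
The inductive hypothesis~\eqref{EqI3} tells us that $\left(\K(m)\right)_{m\ge 1}$ satisfies $\K(m+1) > 3\K(m) \ge 2\K(m)$, so it is a sequence of exponential type, and Lemma~\ref{Lem4}(i) says the right-hand side of~\eqref{EqPlan105_3} is minimized exactly when $\{\x,\y\} = \{\Floor{n/2},\Ceil{n/2}\}$. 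This pins down $2\m = \K(\Ceil{n/2})+\K(\Floor{n/2})+2$, which is~\eqref{EqB3}; the division by $2$ is legitimate because the right-hand side is even (each $\K(\cdot)$ is even by the inductive parity claim). Lemma~\ref{Lem_mkc}(ii) then forces $k$ to be the smaller of $\K(\Ceil{n/2})$ and $\K(\Floor{n/2})$, namely $\K(\Floor{n/2})$ by~\eqref{EqI3}, giving~\eqref{EqKK3}.

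The one genuinely new wrinkle, and the step I expect to require the most care, is verifying~\eqref{EqI3}, the strict bound $\K(n) > 3\K(n-1)$, since it is both the engine of the exponential-type hypothesis and must be re-established at each inductive step. Unlike the base-$2$ case, here I must also confirm at each stage that $\B(n)$ as computed is a genuine integer (that the numerator in~\eqref{EqB3} is even) and that the resulting $k=\K(\Floor{n/2})$ does not exceed the bound $2\m-2$ of Theorem~\ref{Th131_1}; the latter is where Lemma~\ref{lem_bound} should supply the needed slack once $\m \ge 2$. The verification of~\eqref{EqI3} itself is a routine calculation: substituting~\eqref{EqB3} and~\eqref{EqKK3} and comparing $\K(n)$ against $3\K(n-1)$ reduces, via the exponential growth of the tower $3^{\B(n)}$, to checking that $\B(n) > \B(n-1)$ with room to spare, which follows from~\eqref{EqB3} and the inductive monotonicity of $\K$. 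As in Theorem~\ref{ThBase2} I would state that this is a routine induction and omit the arithmetic details.
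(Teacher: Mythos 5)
Your proposal is correct and follows essentially the same route as the paper's own proof: induction on $n\geq 4$ using Theorem~\ref{Th131_1}, the recurrence~\eqref{EqRec}, Lemma~\ref{Lem4} via the exponential-type hypothesis~\eqref{EqI3}, and Lemma~\ref{Lem_mkc}(ii), with the same key observation that in an odd base all values of $\K$ are even (Lemma~\ref{Lem1}), so the division by $2$ in~\eqref{EqB3} causes no divisibility obstruction. The only quibble is that the evenness of $\K$ is a direct consequence of Lemma~\ref{Lem1}, not an ``inductive parity claim'' as you phrase it at one point, though you cite the lemma correctly at the outset.
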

\begin{proof}
The proof is similar to that of Theorem \ref{ThBase2},
except at one step.
Again we use induction on $n \ge 4$ and let $u := \K(n) = 3^{\m}+1+k$,
where $m:=\B(n)$ and $0 \le k \le 2{\m}-2$.
Then
$n = F(u) = F(k)+F(2{\m}-k-2) = \x+\y$, say,
with  $\x + \y =n$.
Then $k \ge \K(\x)$, $2{\m}-k-2 \ge \K(\y)$, so
\beql{Eq1053}
2{\m} \ge \K(\x)+\K(\y)+2\,.
\eeq
The difference from \eqref{Eq105} in the base $2$ case lies in the presence
of the factor of $2$ (in general it will be $b-1$) on the left-hand side
of  this inequality.
So now we must minimize the sum $\K(x)+\K(y)$ subject to the 
additional requirement that the sum is even. Here that does
not cause any difficulty, because all values of $\K$ are even 
(by Lemma \ref{Lem1}).
We complete the proof as in the base $2$ case,
by taking $\x=\Floor{ \tfrac{n}{2} }$, $\y=\Ceil{ \tfrac{n}{2} }$.
\end{proof}

The first seven terms of $\B(n)$ and $\K(n)$ for base $3$ are shown in 
Table \ref{TabK1}; the first
100 terms may be found in \seqnum{A230639}\label{A230639} 
and \seqnum{A230640}\label{A230640}.



\section{\texorpdfstring{$\K(n)$}{K(n)} for a general
base \texorpdfstring{$b$}{b}}\label{SecBb}

In this section we give a set of recurrences
that determine $\K(n)$ for
a general base $b \ge 2$.
The divisibility requirement that we
encountered in \eqref{Eq1053} for the base $3$ case makes the recurrences
in the general case considerably more complicated.

We know from Theorem \ref{Th131_1} that $\K(n)$ has
the form 
\beql{Eq9.1}
\K(n) = b^{\B(n)}+1+k\,,
\eeq
where 
$0 \le k \le (b-1)\B(n)-2$.
Then by \eqref{EqRec},
$$
n = \F(\K(n)) = F(k)+F((b-1)\B(n) -k-2) = \x+\y\,,
$$
where $\x:=\F(k)$ and $\y:=\F((b-1)\B(n) -k-2)$.
Since both $k$ and $(b-1)\B(n) -k-2$ are smaller than $\K(n)$ and 
thus cannot have $n$ generators, 
the values of $\x,\y$ must be in the range from $1$ to $n-1$.

The definitions of $\x,\y$ imply
$k \ge \K(\x)$, $(b-1)\B(n)-k-2 \ge \K(\y)$, and therefore
\beql{Eqbstar2}
(b-1)\B(n)  \ge \K(\x)+\K(\y) + 2\,.
\eeq
Since in general $\K(\x)+\K(\y)+2$ will not be a multiple of $b-1$,
the implied lower bound on $\B(b)$ cannot always be attained. We therefore 
refine the inequality \eqref{Eqbstar2} using the functions $\K_i(n)$ 
introduced in Section \ref{SecPropK}, and replace \eqref{Eqbstar2} with an
inequality where the implied lower bound on $\B(n)$ {\em can} be attained. 
If $k\equiv i \pmod{b-1}$ for $i\in\sI$, then 
$(b-1)\B(n)-k-2\equiv -i-2 \pmod{b-1}$ and so
$k\geq \K_i(x)$, $(b-1)\B(n)-k-2\geq \K_{-i-2}(y)$, and
\beql{Eqbstar}
(b-1)\B(n)  \ge \K_i(\x)+\K_{-i-2}(\y) + 2\,.
\eeq
Now, in contrast to \eqref{Eqbstar2}, the right-hand side is
divisible by $b-1$, and so we obtain an 
integer-valued lower bound on $\B(n)$ (for some $i$ and $\x+\y=n$). 
Namely, \eqref{Eqbstar} implies
\beql{EqA11a}
\B(n) \ge \frac{\min_{i \in \sI} \min_{1\le\x\le n-1} ~ 
\K_i(\x)+\K_{-i-2}(n-\x) + 2}{b-1}\,.
\eeq

We will show by induction that for any $i \in \sI$, the sequences
$\left(\K_i(n)\right)_{n \ge 1}$ and $\left(\K_{-i-2}(n)\right)_{n \ge 1}$
form a pair of sequences of exponential type. 
Then by Lemma~\ref{Lem4}, we will be able to replace the inner 
minimum in~\eqref{EqA11a} with 
\beql{EqA11c}
\K'_i(n) ~:=~
 \min  
\left\{ 
\K_i\left(\Ceil{ \tfrac{n}{2} }\right)
+\K_{-i-2}\left(\Floor{ \tfrac{n}{2} }\right) 
,~ 
\K_i\left(\Floor{ \tfrac{n}{2} }\right)
+\K_{-i-2}\left(\Ceil{ \tfrac{n}{2} }\right)
 \right\}\,.
 \eeq
In fact, we will prove that equality holds in \eqref{EqA11a}, 
i.e., $\B(n)=\hat\B(n)$, where
 \beql{hatB}
\hat\B(n) ~:=~ \frac{\min_{i \in \sI} \K'_i(n) +2}{b-1}
~=~ 
\frac{\min_{i \in \sI} \K_i\left(\Ceil{ \tfrac{n}{2} }\right)
+\K_{-i-2}\left(\Floor{ \tfrac{n}{2} }\right)   +2}{b-1}\,,
 \eeq
where the latter expression follows from the symmetry between $i$ and $-i-2$.

For $n \ge 2$ and $i \in \sI$, we define 
\beql{Eqci}
c_{i,n} ~:=~ 
\text{~smallest~integer~} c\geq 1 \text{~such~that~} 
\K'_{i-2c}(n)=\min_{j\in\sI} \K'_j(n)
\eeq
and
\begin{align}\label{Eqhi}
 h_{i,n} :=  
 \Ceil{ \frac{n}{2} } \quad  \text{if}\  
 \K_{i-2c_{i,n}}\left(\Ceil{ \tfrac{n}{2} }\right)+& \K_{2c_{i,n}-i-2}\left(\Floor{ \tfrac{n}{2} }\right)
   \nonumber \\
 <& \K_{i-2c_{i,n}}\left(\Floor{ \tfrac{n}{2} }\right)+
  \K_{2c_{i,n}-i-2}\left(\Ceil{ \tfrac{n}{2} }\right);  \nonumber \\
 h_{i,n} :=
 \Floor{ \frac{n}{2} } \quad  \text{otherwise}\,.
\end{align}

These definitions allow us to express $\hat\B(n)$ as
\beql{hatBprop}
\hat\B(n) = \frac{\K'_{i-2c_{i,n}}(n)+2}{b-1} = 
\frac{\K_{i-2c_{i,n}}(h_{i,n})+\K_{2c_{i,n}-i-2}(n-h_{i,n})+2}{b-1}\,,
\eeq
which holds for any $i \in \sI$.


\begin{thm}\label{ThBB}
For all $n \ge 2$, 
\beql{EqS5}
\min_{1\leq j\leq n-1} \K_i(j) + \K_{-i-2}(n-j) 
= \K'_i(n)\ \text{~for all}\ i \in \sI\,,
\eeq
and
\beql{EqS1}
\B(n) ~=~ \hat\B(n)\,.
\eeq
Furthermore, for all $n \ge 3$,\footnote{Note that $\B(1)$ is not defined, which is why we start~\eqref{EqS4} at $n=3$.} 
\beql{EqS4}
\B(n) ~\ge~ 
\begin{cases} 
\B(n-1)+1, & \text{if $b$ is odd and $n\in\{3,4\}$}\,;\\
\B(n-1)+2, & \text{otherwise}\,.
\end{cases}
\eeq
\end{thm}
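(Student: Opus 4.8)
The plan is to prove the three assertions \eqref{EqS5}, \eqref{EqS1}, and \eqref{EqS4} simultaneously by strong induction on $n$, since they are tightly interlocked: \eqref{EqS5} is what lets us collapse the minimum over all splittings to the two central ones, \eqref{EqS1} then pins down $\B(n)$, and \eqref{EqS4} supplies the growth rate that is needed to re-establish \eqref{EqS5} at the next stage. The small cases — and in particular the exceptional odd-$b$ values $n\in\{3,4\}$ of \eqref{EqS4} — I would verify by hand from the explicit formulas in Theorems~\ref{ThKi1}--\ref{ThKi3} and the tables; these are exactly the places where $\B$ grows by only $1$. Within the inductive step at a fixed $n$ I would prove \eqref{EqS5}, then \eqref{EqS1}, then \eqref{EqS4}, in that order, using all three statements freely for indices below $n$ and using the \emph{unconditional} two-sided bounds $b^{\B(m)}<\K(m)\le\K_i(m)<\beta\,b^{\B(m)+1}$ of Theorem~\ref{Th_Ki_bounds}.

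For \eqref{EqS5} the key is to show that, for each fixed $i\in\sI$, the pair $\bigl(\K_i(m)\bigr)_m$, $\bigl(\K_{-i-2}(m)\bigr)_m$ satisfies the hypotheses \eqref{EqSplit2} of Lemma~\ref{Lem4}(ii) for $m=1,\dots,n-2$; granting this, Lemma~\ref{Lem4}(ii) yields precisely \eqref{EqS5} with right-hand side $\K'_i(n)$. To verify $\K_i(m+1)\ge \K_i(m)+\K_{-i-2}(m)$ and its symmetric partner, I would bound the right-hand side above by $2\beta\,b^{\B(m)+1}$ and the left-hand side below by $b^{\B(m+1)}$, and then invoke the induction hypothesis \eqref{EqS4}, which gives $\B(m+1)\ge\B(m)+2$ (or only $\B(m+1)\ge\B(m)+1$ in the odd-$b$ exceptions). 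Since $b\ge 2\beta$ in every base, $b^{\B(m+1)}\ge 2\beta\,b^{\B(m)+1}$, and the strictness of the upper bound closes the gap even when this last inequality is an equality. The genuinely small index $m=1$ (where $\B$ is undefined and $\K(1)=0$) must be treated directly, which is routine.

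For \eqref{EqS1} I would prove the two inequalities separately. The lower bound $\B(n)\ge\hat\B(n)$ is immediate: substituting the just-proved identity \eqref{EqS5} into the unconditional bound \eqref{EqA11a} replaces the inner minimum by $\K'_i(n)$, and the outer minimum over $i\in\sI$ then gives $\B(n)\ge\frac{\min_{i}\K'_i(n)+2}{b-1}=\hat\B(n)$ by \eqref{hatB}. For the reverse inequality I would exhibit a witness of the correct length. Note first that $m:=\hat\B(n)$ is a positive integer, since each $\K'_i(n)+2=\K_i(\cdot)+\K_{-i-2}(\cdot)+2$ has its two summand residues adding to $-2\pmod{b-1}$. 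Choosing $i^\ast$ attaining $\min_{i}\K'_{i}(n)$ together with the ordering realizing the minimum in \eqref{EqA11c}, set $k:=\K_{i^\ast}\bigl(\Ceil{n/2}\bigr)$ (or the $\Floor{n/2}$ version, according to which ordering wins) and $u:=b^{m}+1+k$. Then $k\equiv i^\ast\pmod{b-1}$, the identity $(b-1)m=\K'_{i^\ast}(n)+2$ forces $(b-1)m-k-2=\K_{-i^\ast-2}\bigl(\Floor{n/2}\bigr)$, and \eqref{EqRec} gives $F(u)=\Ceil{n/2}+\Floor{n/2}=n$. Since $0\le k\le(b-1)m-2\le b^{m}$ (Lemma~\ref{lem_bound}), the triple of $u$ in the sense of Lemma~\ref{Lem_mkc} is $(m,1,k)$; hence $\K(n)\le u$ and Lemma~\ref{Lem_mkc}(ii) yields $\B(n)\le m=\hat\B(n)$.

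Finally, for \eqref{EqS4} I would compare $\K'_i(n)$ with $\K'_i(n-1)$ at each \emph{fixed} $i$ and only then take the minimum, since \eqref{EqS1} gives $(b-1)\bigl(\B(n)-\B(n-1)\bigr)=\min_{i}\K'_i(n)-\min_{i}\K'_i(n-1)$. Passing from $n-1$ to $n$ increments the larger of the two central arguments $\Ceil{n/2},\Floor{n/2}$ by one, so a short computation expresses $\K'_i(n)-\K'_i(n-1)$ as a maximum (when $n$ is even) or a minimum (when $n$ is odd) of first differences $\K_j(t)-\K_j(t-1)$ or $\K_j(t+1)-\K_j(t)$. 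Each first difference I would bound below by pitting $\K_j(t)>b^{\B(t)}$ against $\K_j(t-1)<\beta\,b^{\B(t-1)+1}$ and the induction hypothesis $\B(t)\ge\B(t-1)+2$, obtaining a quantity of order $b^{\B(t-1)+1}(b-\beta)\ge 2(b-1)$ whenever the relevant $\B$-value is at least $1$, the residual cases lying among the small cases checked directly. As the bound $2(b-1)$ is uniform in $i$, taking the minimum gives $\B(n)-\B(n-1)\ge 2$. I expect the main obstacle to be exactly this interlocking of the three statements — above all the verification of the pairwise exponential-type hypotheses \eqref{EqSplit2}, where $\B$-growth must be converted into $\K_i$-growth through the two-sided bounds — together with the bookkeeping of the residue classes $i$ versus $-i-2$ and the careful treatment of the boundary indices (the $\B=1$ cases, $m=1$, and the odd-$b$ exceptions), which is precisely where the $+1$ versus $+2$ dichotomy in \eqref{EqS4} originates.
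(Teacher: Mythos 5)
Your proposal is correct and follows essentially the same route as the paper's proof: the same interlocked strong induction (small cases, including the odd-$b$ exceptions $n\in\{3,4\}$, checked by hand; \eqref{EqS5} via Lemma~\ref{Lem4}(ii) after establishing $\K_i(m+1)>\K_i(m)+\K_{-i-2}(m)$ from Theorem~\ref{Th_Ki_bounds} together with the inductive \eqref{EqS4}; the lower bound in \eqref{EqS1} by inserting \eqref{EqS5} into \eqref{EqA11a}; the upper bound by exhibiting a witness of exponent $\hat\B(n)$ whose generator count is evaluated with \eqref{EqRec} and compared through Lemma~\ref{Lem_mkc}; and \eqref{EqS4} by reducing to a $2(b-1)$ gap between $\K$-values at consecutive arguments, proved from the two-sided bounds plus inductive growth of $\B$). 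The only deviations are cosmetic: your witness $u=b^{\hat\B(n)}+1+\K_{i^\ast}(\cdot)$ uses leading coefficient $1$ at the global minimizer $i^\ast$, whereas the paper's $L$ carries coefficient $c_{i,n}$ for every $i\in\sI$ (which it reuses later in Theorem~\ref{ThS2}), and in \eqref{EqS4} you compare $\K'_i(n)$ with $\K'_i(n-1)$ index-by-index via same-index first differences, where the paper instead pits $2\K(t)$ against $\K(t)+\K_{-\ell-2}(t-1)$ and proves the cross-index inequality \eqref{EqKtKl} --- both reductions rest on exactly the same estimates.
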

\begin{proof}
We will prove all three statements 
\eqref{EqS5}, \eqref{EqS1}, and \eqref{EqS4} 
together by induction on $n$.
We write~$\eqref{EqS5}_j$, $\eqref{EqS1}_j$,
$\eqref{EqS4}_j$, to refer to the statements~\eqref{EqS5}, \eqref{EqS1},
\eqref{EqS4} for $n=j$.
We divide the proof into the following four parts:  
\begin{enumerate}[(I)]
\item  $\eqref{EqS5}_2$, $\eqref{EqS1}_2$, $\eqref{EqS1}_3$, $\eqref{EqS4}_3$ are true.
\item For $n\geq 3$, $\eqref{EqS5}_n$ follows from $\eqref{EqS4}_{j}$ for $3\leq j<n$.
\item For $n\geq 4$, $\eqref{EqS1}_n$ follows from $\eqref{EqS5}_n$ and $\eqref{EqS4}_j$ for $3\leq j< n$.  
\item For $n\geq 4$, $\eqref{EqS4}_n$ follows from $\eqref{EqS1}_{n}$, $\eqref{EqS1}_{n-1}$, $\eqref{EqS4}_j$ for $3 \le j< n$.
\end{enumerate}

\noindent \textbf{\it Proof of (I).} 
This is easily verified using the
values of $\K(n)$ and $\K_i(n)$ from Theorems~\ref{ThK2}, \ref{ThK3}, \ref{ThKi1}, \ref{ThKi2}, 
and noticing that $c_{i,2}=1$ and $h_{i,2}=1$ for all $i\in\sI$.

\vspace*{0.1in}
\noindent \textbf{\it Proof of (II).} Let $n\geq 3$ and $i\in\sI$.
To establish $\eqref{EqS5}_n$, we will first show that
\beql{EqKsplit}
\K_i(m+1)  ~>~ \K_i(m) + \K_{-i-2}(m)\,
\eeq
holds for all $m=1,2,\dots,n-2$, 
and then apply Lemma~\ref{Lem4}(ii). 
The inequality~\eqref{EqKsplit}  can be verified directly for $m=1$ using Theorems~\ref{ThKi1} and~\ref{ThKi2}.
If $m \ge 2$, 
we consider two cases depending on the parity of $b$, and use 
Theorem~\ref{Th_Ki_bounds} to bound the terms in~\eqref{EqKsplit}.
For even $b$, 
we have $\K_i(m+1) > b^{\B(m+1)} \geq b^{\B(m)+2}$ 
by~$\eqref{EqS4}_{m+1}$,  
while the right-hand side of \eqref{EqKsplit} 
is at most $2b^{\B(m)+1}\leq b^{\B(m)+2}$ (since $b\geq 2$).
For odd $b$, we have 
$\K_i(m+1) > b^{\B(m+1)} \geq b^{\B(m)+1}$ by $\eqref{EqS4}_{m+1}$,  
while the right-hand side of \eqref{EqKsplit} is at most 
$b^{\B(m)+1}$.
This proves~\eqref{EqKsplit}, which by Lemma~\ref{Lem4}(ii) 
(taking $a(m)=\K_i(m)$, $b(m)=\K_{-i-2}(m)$) implies $\eqref{EqS5}_n$.

\vspace*{0.1in}
\noindent \textbf{\it Proof of (III).} Let $n\geq 4$. We fix an arbitrary $i\in\sI$.
To prove $\eqref{EqS1}_n$, 
we first use Theorem~\ref{Th_form_Ki} to write 
$\K_i(n) = c(b^{\B(n)}+1) + k$ 
for some integers $c$ and $k$ satisfying $1\le c\le b-1$ and 
$0 \le k \le (b-1)\B(n)-2$. Then \eqref{EqRec} implies
$n = \F(\K_i(n)) = \F(k) + \F((b-1)\B(n)-k-2)$.

Since $\K_i(n)\equiv i\pmod{b-1}$, we have $k\equiv i-2c\pmod{b-1}$. 
Let $x:=\F(k)$. 
Then $k\geq \K_{i-2c}(x)$ and 
$(b-1)\B(n)-k-2\geq \K_{2c-i-2}(n-x)$,
thus
\beql{EqBBprime}
\B(n)
~ \stackrel{\text{(i)}}{\geq}~
\frac{\K_{i-2c}(x) + \K_{2c-i-2}(n-x) + 2}{b-1}
~ \stackrel{\text{(ii)}}{\geq}~ 
\frac{\K'_{i-2c}(n)+2}{b-1}
~ \stackrel{\text{(iii)}}{\geq}~
\hat\B(n)\,,
\eeq
where inequality (i) is immediate,
(ii) follows from $\eqref{EqS5}_n$, and (iii) follows from \eqref{hatB}.

Conversely, we now prove that $\B(n) \le \hat\B(n)$. First, we notice 
that $\eqref{EqS4}_j$ for $3\leq j\leq \Ceil{\tfrac{n}{2}}$ and 
Theorem~\ref{ThK2} imply that $\B\left(\Ceil{\tfrac{n}{2}}\right)\geq \B(2) \geq 1$.
Together with~\eqref{hatBprop} and Theorem~\ref{Th_Ki_bounds}, this further gives 
\beql{eq_hatB2}
\hat\B(n) \geq \frac{\K\left(\Ceil{\tfrac{n}{2}}\right)+2}{b-1} >
\frac{b^{\B\left(\Ceil{\tfrac{n}{2}}\right)}+2}{b-1} \geq \frac{b+2}{b-1} > 1\,.
\eeq 
Now, let us define  
$L := c_{i,n} (b^{\hat\B(n)}+1) + 
\K_{i-2c_{i,n}}(h_{i,n})$, which has the form~\eqref{EqStd} since
$$
\K_{i-2c_{i,n}}(h_{i,n}) \leq (b-1)\hat\B(n) - 2 \leq b^{\hat\B(n)}-b+1
$$
as follows from~\eqref{hatBprop}, \eqref{eq_hatB2}, 
and Lemma~\ref{lem_bound}.
Using \eqref{EqRec} and \eqref{hatBprop},
we have
\beql{EqQQ1}
\begin{split}
F(L) &= F(\K_{i-2c_{i,n}}(h_{i,n})) + F((b-1)\hat\B(n)-\K_{i-2c_{i,n}}(h_{i,n})-2)\\
&= F(\K_{i-2c_{i,n}}(h_{i,n})) + F(\K_{2c_{i,n}-i-2}(n-h_{i,n}))\\
&= h_{i,n} + n-h_{i,n}\\ 
&= n\,.
\end{split}
\eeq
From~\eqref{EqQQ1} and $L \equiv 2c_{i,n} +(i-2c_{i,n}) \equiv i \pmod{b-1}$,
it follows that $\K_i(n)\leq L$. Then Lemma~\ref{Lem_mkc} 
implies that $\B(n)\leq \hat\B(n)$, which together 
with \eqref{EqBBprime} establishes $\eqref{EqS1}_n$.

\vspace*{0.1in}
\noindent \textbf{\it Proof of (IV).} 
For $n=4$, we use the identity $\eqref{EqS1}_4$ and Theorem~\ref{ThKi2} to obtain
\beql{EqE4}
\B(4) = 
\begin{cases} 
12, & \text{if $b=2$}\,; \\
5, & \text{if $b=3$}\,; \\
4, & \text{if $b=5$}\,; \\
2b+4, & \text{if $b\geq 4$ is even}\,; \\
3, & \text{if $b\geq 7$ is odd}\,.
\end{cases}
\eeq
Comparing these values to those of $\B(3)$ given in Theorem~\ref{ThK3}, we conclude that $\eqref{EqS4}_4$ holds.

Let $n\geq 5$.
To prove $\eqref{EqS4}_n$, we consider two cases depending on 
the parity of $n$.

First, suppose that $n$ is even, i.e., $n=2t$ for some $t\geq 3$. 
The identity~$\eqref{EqS1}_n$ gives
\beql{Eq324}
\B(n)  ~=~ 
\frac{\K_{i-2c_{i,n}}(t)+\K_{2c_{i,n}-i-2}(t)+2}{b-1}
 ~\ge~ \frac{2 \K(t)+2}{b-1}\,,
\eeq
while $\eqref{EqS1}_{n-1}$ gives
\beql{Eq325}
\B(n-1) ~=~
\frac{ 
\min_{j} \K_j(t)+\K_{-j-2}(t-1) 
+ 2}{b-1}\,. 
\eeq
We obtain an upper bound on the right-hand side of~\eqref{Eq325} 
if we choose any particular value of $j$,
so let us choose $j=\ell$ such that $\K_\ell(t) = \K(t)$. Then
$$
\B(n-1) ~\leq~ \frac{ \K(t) + \K_{-\ell-2}(t-1) + 2}{b-1}\,. 
$$
The inequality $\eqref{EqS4}_n$ will follow if we show that
$$
\frac{2 \K(t)+2}{b-1} \geq \frac{\K(t)+\K_{-\ell-2}(t-1)+2}{b-1}+2\,,
$$
or, equivalently,
\beql{EqKtKl}
\K(t) \geq \K_{-\ell-2}(t-1)+2(b-1)\,.
\eeq
In fact, \eqref{EqKtKl} holds for any $t$ such that 
$\eqref{EqS4}_t$ holds and any $\ell\in\sI$. To prove it, 
we consider two cases depending on the parity of $b$ and use Lemma~\ref{lem_bound},
Theorem~\ref{Th_Ki_bounds}, and $\eqref{EqS4}_t$. For even $b$, 
the inequality~\eqref{EqKtKl} follows from
$$
\K(t) > b^{\B(t)} \geq b^{\B(t-1)+2} \geq b^{\B(t-1)+1} + 2(b-1) >
\K_{-\ell-2}(t-1)+2(b-1)\,.
$$
For odd $b$, the inequality~\eqref{EqKtKl} follows from
$$\K(t) > b^{\B(t)} \geq b^{\B(t-1)+1} \geq \frac{1}{2}b^{\B(t-1)+1}
+ 2(b-1) > \K_{-\ell-2}(t-1)+2(b-1)\,.$$
This completes the proof of~\eqref{EqKtKl} and thus 
$\eqref{EqS4}_n$ for even $n$.

Second, suppose that $n$ is odd, i.e., $n=2t-1$ for some $t\geq 3$. Arguing as before, we have
$$
\B(n) \geq \frac{\K(t)+\K(t-1)+2}{b-1}
~\text{and}~\B(n-1)\leq 
\frac{\K(t-1)+\K_{-\ell-2}(t-1)+2}{b-1}
$$
for some $\ell\in\sI$.
We know from the previous paragraph that the
inequality~\eqref{EqKtKl} holds, and this again implies
$\eqref{EqS4}_n$.

This completes the proof of the theorem.
\end{proof}

From~\eqref{EqS5} we immediately have
\begin{cor}\label{CorKexp}
For any $i \in \sI$, the sequences 
$\left(\K_i(n)\right)_{n \ge 1}$ and $\left(\K_{-i-2}(n)\right)_{n \ge 1}$
form a pair of sequences of exponential type. 
\end{cor}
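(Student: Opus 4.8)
The plan is to recognize that Corollary~\ref{CorKexp} is nothing more than \eqref{EqS5} restated in the terminology introduced immediately after Lemma~\ref{Lem4}. Recall that a pair of sequences $(a_1(m))_{m\ge 1}$, $(a_2(m))_{m\ge 1}$ is defined to be a \emph{pair of sequences of exponential type} exactly when the splitting identity \eqref{EqSplit3} holds for every $n \ge 2$. So the entire task is to match this definition against what Theorem~\ref{ThBB} already provides, and no fresh analytic estimate should be required.

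Concretely, I would fix $i \in \sI$ and set $a_1(m) := \K_i(m)$ and $a_2(m) := \K_{-i-2}(m)$. For this choice, the right-hand side of \eqref{EqSplit3} is $\min\{\K_i(\Ceil{n/2}) + \K_{-i-2}(\Floor{n/2}),\, \K_i(\Floor{n/2}) + \K_{-i-2}(\Ceil{n/2})\}$, which is precisely $\K'_i(n)$ as defined in \eqref{EqA11c}. Hence \eqref{EqSplit3} for the pair $(\K_i,\K_{-i-2})$ reads $\min_{1 \le j \le n-1} \K_i(j)+\K_{-i-2}(n-j) = \K'_i(n)$, and this is exactly $\eqref{EqS5}$. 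Since Theorem~\ref{ThBB} establishes $\eqref{EqS5}$ for all $n \ge 2$ and all $i \in \sI$, the defining property of an exponential-type pair holds over the required range of $n$, for each $i \in \sI$. This completes the argument.

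The one point that warrants a sentence of care is the index bookkeeping: one should confirm that $-i-2$, read modulo $b-1$, is itself a legitimate index in $\sI$, so that $a_2$ genuinely belongs to the family $\{\K_j\}_{j \in \sI}$ and the pairing is symmetric in $i \leftrightarrow -i-2$. For even $b$ this is automatic, since $\sI = \{0,1,\dots,b-2\}$ already consists of all residues modulo $b-1$. For odd $b$ one observes that $-i-2 \equiv b-3-i \pmod{b-1}$, and since $b-3$ is even and $i \in \sI$ is even, the residue $b-3-i$ is even and therefore lies in $\sI$ as well.

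I expect no real obstacle in this corollary. All of the genuine difficulty—the inductive interplay between the splitting identity $\eqref{EqS5}$, the formula $\eqref{EqS1}$ for $\B(n)$, and the growth bound $\eqref{EqS4}$—was already discharged in the proof of Theorem~\ref{ThBB}. The corollary is a pure unwinding of definitions, and its value is organizational: it packages $\eqref{EqS5}$ into the form that Lemma~\ref{Lem4}(ii) is designed to consume, which is what justifies replacing the inner minimum in $\eqref{EqA11a}$ by $\K'_i(n)$ in the subsequent development.
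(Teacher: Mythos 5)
Your proposal is correct and matches the paper's own argument: the paper derives Corollary~\ref{CorKexp} as an immediate consequence of \eqref{EqS5}, which for the pair $\left(\K_i(n)\right)_{n\ge1}$, $\left(\K_{-i-2}(n)\right)_{n\ge1}$ is precisely the defining identity \eqref{EqSplit3}, just as you observe. Your extra check that $-i-2$ lies in $\sI$ (via $-i-2 \equiv b-3-i \pmod{b-1}$ for odd $b$) is a sound, if implicit in the paper, piece of bookkeeping.
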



\begin{thm}\label{ThS2}
For any $b\geq 2$, $i \in \sI$, and $n\geq 2$, we have
\beql{EqS2}
\K_i(n) ~=~ c_{i,n} (b^{\B(n)}+1) + \K_{i-2c_{i,n}}(h_{i,n})\,.
\eeq
Moreover, the representation~\eqref{EqS2} has the form~\eqref{Eq_form_Ki}.
\end{thm}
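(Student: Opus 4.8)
The plan is to establish the formula \eqref{EqS2} for $\K_i(n)$ by combining the structural result of Theorem~\ref{Th_form_Ki} (which already pins down the leading exponent as $\B(n)$ and the admissible ranges for $c$ and $k$) with the minimization machinery just developed in Theorem~\ref{ThBB}. The candidate number $L := c_{i,n}(b^{\B(n)}+1) + \K_{i-2c_{i,n}}(h_{i,n})$ was already shown in Proof of (III) of Theorem~\ref{ThBB} to satisfy $F(L)=n$ and $L \equiv i \pmod{b-1}$ (there it appeared with $\hat\B(n)$ in place of $\B(n)$, but these are equal by \eqref{EqS1}). Hence $L$ is a valid competitor in the definition of $\K_i(n)$, giving the upper bound $\K_i(n) \le L$. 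The work therefore lies entirely in the matching lower bound $\K_i(n) \ge L$.

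For the lower bound, I would start from Theorem~\ref{Th_form_Ki}, which guarantees $\K_i(n) = c(b^{\B(n)}+1)+k$ for some $1 \le c \le b-1$ and $0 \le k \le (b-1)\B(n)-2$. Applying \eqref{EqRec} gives $n = F(k) + F((b-1)\B(n)-k-2)$, and setting $x := F(k)$ yields $k \ge \K_{i-2c}(x)$ and $(b-1)\B(n)-k-2 \ge \K_{2c-i-2}(n-x)$, exactly as in \eqref{EqBBprime}. By \eqref{EqS1} the lower-bound chain in \eqref{EqBBprime} must be tight at every link, since $\B(n)=\hat\B(n)$. Tightness of link (ii), which came from $\eqref{EqS5}_n$ via Lemma~\ref{Lem4}(ii), forces $x \in \{\Floor{n/2}, \Ceil{n/2}\}$; more precisely, the ``moreover'' clause of Lemma~\ref{Lem4}(ii) (using $\K_i(2)>0$, available from Theorem~\ref{ThKi2}) makes these the only minimizers. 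Tightness of link (iii), which came from the outer minimum over $i \in \sI$ in \eqref{hatB}, forces the shift $c$ to realize the minimizing residue class, i.e. $i - 2c \equiv i - 2c_{i,n} \pmod{b-1}$.

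The delicate part is promoting these congruence-level conclusions to the \emph{exact} values $c = c_{i,n}$ and $k = \K_{i-2c_{i,n}}(h_{i,n})$, which is where I expect the main obstacle. Knowing the residue class of $c$ only determines $c$ modulo $\tfrac{b-1}{2}$ (for odd $b$) or $b-1$ (for even $b$); I would argue that $c_{i,n}$, being defined in \eqref{Eqci} as the \emph{smallest} such positive $c$, gives the smallest valid leading coefficient, and that by Lemma~\ref{Lem_mkc}(ii) any larger admissible $c$ in the same class would yield a strictly larger $u$, so minimality of $\K_i(n)$ selects $c = c_{i,n}$. Once $c = c_{i,n}$ is fixed, the inequality $k \ge \K_{i-2c_{i,n}}(x)$ together with minimality of $\K_i(n)$ and Lemma~\ref{Lem_mkc}(ii) forces $k = \K_{i-2c_{i,n}}(x)$ with $x$ chosen to minimize the total; the choice between $\Floor{n/2}$ and $\Ceil{n/2}$ is precisely what the definition \eqref{Eqhi} of $h_{i,n}$ encodes (picking $x=h_{i,n}$ as the half that makes $\K_{i-2c_{i,n}}(x)$ the smaller of the two, hence the minimal $k$). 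This yields $k = \K_{i-2c_{i,n}}(h_{i,n})$, giving $\K_i(n) \ge L$ and completing the proof. The final ``moreover'' — that \eqref{EqS2} has the form \eqref{Eq_form_Ki} — is then immediate from Theorem~\ref{Th_form_Ki} together with the bound $\K_{i-2c_{i,n}}(h_{i,n}) \le (b-1)\B(n)-2$ established via \eqref{hatBprop}, \eqref{eq_hatB2}, and Lemma~\ref{lem_bound} in Proof of (III).
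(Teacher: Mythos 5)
Your argument for $n\geq 4$ coincides with the paper's own proof: the same upper bound $\K_i(n)\leq L$ via the competitor $L$ from Part (III) of Theorem~\ref{ThBB}, the same tightening of the chain \eqref{EqBBprime} using $\B(n)=\hat\B(n)$, the same two-sided squeeze $c\geq c_{i,n}$ (from tightness of (iii) and the definition \eqref{Eqci}) and $c\leq c_{i,n}$ (from $\K_i(n)\leq L$ and Lemma~\ref{Lem_mkc}), and the same identification of $k$ via tightness of (i) and (ii), Lemma~\ref{Lem4}(ii), and the comparison $k\leq \K_{i-2c_{i,n}}(h_{i,n})$.

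Two points need attention. The genuine one is that the theorem claims $n\geq 2$, while everything you import from Part (III) of Theorem~\ref{ThBB} --- the chain \eqref{EqBBprime}, the verification that $L$ has the form \eqref{EqStd} (done there via \eqref{eq_hatB2} and Lemma~\ref{lem_bound}), and hence $F(L)=n$ --- is established in the paper only for $n\geq 4$, and the cited inequalities really can fail for small $n$: for odd $b$ one has $\B(2)=1$, so \eqref{eq_hatB2} (which asserts $\hat\B(n)>1$) and Lemma~\ref{lem_bound} (which requires $m\geq 2$) are unavailable. The paper therefore disposes of $n=2,3$ by direct inspection of Theorems~\ref{ThK2}, \ref{ThK3}, \ref{ThKi1}, \ref{ThKi2}, \ref{ThKi3}, noting $c_{i,2}=c_{i,3}=1$, $h_{i,2}=1$, and $h_{i,3}=1$ except $h_{0,3}=2$ for $b\geq 4$; your proof needs the same base-case check. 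The second point is cosmetic: your parenthetical reading of \eqref{Eqhi} --- that $h_{i,n}$ is ``the half that makes $\K_{i-2c_{i,n}}(x)$ the smaller of the two'' --- is not what \eqref{Eqhi} says. When $h_{i,n}=\Ceil{\tfrac{n}{2}}$, monotonicity of the $\K_j$ makes $\K_{i-2c_{i,n}}(h_{i,n})$ the \emph{larger} of the two values; \eqref{Eqhi} actually selects the half belonging to the sum-minimizing pairing, with ties broken toward $\Floor{\tfrac{n}{2}}$. Since that is exactly what your main clause (minimize the total, then take the minimal $k$ among tied pairings) produces, the conclusion $x=h_{i,n}$ and $k=\K_{i-2c_{i,n}}(h_{i,n})$ is unaffected, but the gloss should be corrected.
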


\begin{proof} For $n=2$ and $n=3$, the statement follows from Theorems~\ref{ThK2}, \ref{ThK3}, \ref{ThKi1}, \ref{ThKi2}, and \ref{ThKi3}.
It can be verified that $c_{i,2}=c_{i,3}=1$ and $h_{i,2}=h_{i,3}=1$ for all $i\in\sI$, except for $h_{0,3}=2$ when $b\geq 4$.

For $n\geq 4$, the proof emerges from the proof of Part (III) of Theorem~\ref{ThBB}.
We use Theorem~\ref{Th_form_Ki} to write $\K_i(n) = c(b^{\B(n)}+1) + k$
for some integers $c$ and $k$ satisfying 
$1\le c\le b-1$ and $0 \le k \le (b-1)\B(n)-2$.
Since $\B(n)=\hat\B(n)$ (by Theorem~\ref{ThBB}), the chain of inequalities (i), (ii), 
and (iii) in \eqref{EqBBprime} are all equalities.

By~\eqref{hatB}, the equality (iii) is equivalent 
to $\K'_{i-2c}(n)=\min_{i \in \sI} \K'_i(n)$,
implying that $c\geq c_{i,n}$ (by the definition of $c_{i,n}$).
Again, we consider $L := c_{i,n}(b^{\hat\B(n)}+1) + \K_{i-2c_{i,n}}(h_{i,n})$, 
for which we proved that $\K_i(n)\leq L$. 
Since $\B(n)=\hat\B(n)$, Lemma~\ref{Lem_mkc} now implies 
$c\leq c_{i,n}$, and thus $c=c_{i,n}$.

Since $c=c_{i,n}$, the equality (i) implies $k=\K_{i-2c_{i,n}}(x)$, 
while the equality (ii) implies that
$\K_{i-2c_{i,n}}(x) + \K_{2c_{i,n}-i-2}(n-x)$ equals its minimum 
value $\K'_{i-2c_{i,n}}(n)$.
Then from Lemma~\ref{Lem4} it follows that $x=\Floor{\tfrac{n}{2}}$ 
or $x=\Ceil{\tfrac{n}{2}}$. 
Furthermore, $\K_i(n)\leq L$ implies that 
$k=\K_{i-2c_{i,n}}(x) \leq \K_{i-2c_{i,n}}(h_{i,n})$ 
(by Lemma~\ref{Lem_mkc}) and thus $x\leq h_{i,n}$. 
When $h_{i,n}=\Floor{\tfrac{n}{2}}$ (in particular, when $n$ is even), 
we must have $x=h_{i,n}$. 
On the other hand, if $n$ is odd and $h_{i,n}=\Ceil{\tfrac{n}{2}}$, 
then $x=\Floor{\tfrac{n}{2}}\ne h_{i,n}$ does not produce 
the minimum of $\K_{i-2c_{i,n}}(x) + \K_{2c_{i,n}-i-2}(n-x)$ as 
follows from~\eqref{Eqhi}. Hence in all cases we have $x=h_{i,n}$ 
and thus $k=\K_{i-2c_{i,n}}(h_{i,n})$.
in which cases $x=h_{i,n}$ by~\eqref{Eqhi}. In the case when $n$ is odd and the two sums above are equals, both $x=\Floor{\tfrac{n}{2}}=h_{i,n}$ and $x=\Ceil{\tfrac{n}{2}}=n-h_{i,n}$ deliver the minimum of $\K_{i-2c_{i,n}}(x) + \K_{2c_{i,n}-i-2}(n-x)$. However, here $\K_i(n)\leq L$ implies that $k=\K_{i-2c_{i,n}}(x) \leq \K_{i-2c_{i,n}}(h_{i,n})$ (by Lemma~\ref{Lem_mkc}) and thus $x=h_{i,n}$. Therefore, in all cases we have $k=\K_{i-2c_{i,n}}(h_{i,n})$, 
which completes the proof.
\end{proof}


\begin{cor}\label{Cor20}
\beql{EqCor20}
\K(n) ~\ge~ b^{\B(n)} + 1 + \K\left(\Floor{ \tfrac{n}{2} }\right)\,.
\eeq
\end{cor}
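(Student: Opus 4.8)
The plan is to start from the explicit shape of $\K(n)$ given by Theorem~\ref{Th131_1}, namely $\K(n) = b^{\B(n)} + 1 + k$ with $0 \le k \le (b-1)\B(n)-2$, and to reduce the corollary to the single inequality $k \ge \K(\Floor{n/2})$. Once that is established, adding $b^{\B(n)}+1$ to both sides gives \eqref{EqCor20} at once. So the entire task is to bound the low-order part $k$ from below by $\K(\Floor{n/2})$.

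To control $k$, I would first invoke \eqref{EqGTM} to choose a residue $i^* \in \sI$ realizing the minimum, so that $\K(n) = \K_{i^*}(n)$. Theorem~\ref{ThS2} then writes $\K_{i^*}(n) = c_{i^*,n}(b^{\B(n)}+1) + \K_{i^*-2c_{i^*,n}}(h_{i^*,n})$, and the last clause of that theorem guarantees this is a genuine standard representation of the form~\eqref{Eq_form_Ki}, with exponent $m = \B(n)$ matching the one appearing in Theorem~\ref{Th131_1}. Comparing these two representations of the \emph{same} integer $\K(n)$ and appealing to the uniqueness part of Lemma~\ref{Lem_mkc}(i), I expect to force $c_{i^*,n}=1$, whence $k = \K_{i^*-2}(h_{i^*,n})$.

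It then remains to bound this quantity below. By the definition of $h_{i^*,n}$ in \eqref{Eqhi} we have $h_{i^*,n} \in \{\Floor{n/2}, \Ceil{n/2}\}$, so in particular $h_{i^*,n} \ge \Floor{n/2}$. Using $\K(m) = \min_{j \in \sI} \K_j(m)$ gives $\K_{i^*-2}(h_{i^*,n}) \ge \K(h_{i^*,n})$, and the monotonicity of $\K$ (which follows from \eqref{EqS4} together with Theorem~\ref{Th_Ki_bounds}) gives $\K(h_{i^*,n}) \ge \K(\Floor{n/2})$. Chaining these inequalities yields $k \ge \K(\Floor{n/2})$, completing the argument.

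The one step needing care — the main obstacle — is the identification $c_{i^*,n}=1$. This rests entirely on the fact that the representation supplied by Theorem~\ref{ThS2} truly obeys the digit constraints~\eqref{Eqk} (so that uniqueness in Lemma~\ref{Lem_mkc} applies) and that it shares the exponent $\B(n)$ with the representation of Theorem~\ref{Th131_1}; both points are already built into the statement of Theorem~\ref{ThS2}. Everything after that reduction is routine monotonicity bookkeeping.
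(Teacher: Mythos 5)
Your proposal is correct and follows essentially the same route as the paper's own proof: both select $i$ via \eqref{EqGTM}, apply Theorem~\ref{ThS2}, deduce $c_{i,n}=1$ by matching against the representation in Theorem~\ref{Th131_1} (i.e., uniqueness in Lemma~\ref{Lem_mkc}), and then use $h_{i,n}\ge\Floor{\tfrac{n}{2}}$ together with $\K_{i-2}(h_{i,n})\ge\K(h_{i,n})\ge\K\left(\Floor{\tfrac{n}{2}}\right)$. Your write-up merely makes explicit the uniqueness argument behind the paper's terse ``$c_{i,n}=1$ by Theorem~\ref{Th131_1}'' and the monotonicity bookkeeping, both of which are sound.
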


\begin{proof} From \eqref{EqGTM} and Theorem~\ref{ThS2} it follows that
$\K(n) = \K_i(n) = c_{i,n}(b^{\B(n)}+1)+K_{i-2c_{i,n}}(h_{i,n})$ for some $i \in \sI$, 
for which we also have $c_{i,n} = 1$ by Theorem~\ref{Th131_1} and $h_{i,n} \geq \Floor{ \tfrac{n}{2} }$ by \eqref{Eqhi}.
Hence 
$$
\K(n) = b^{\B(n)} + 1 + \K_{i-2}(h_{i,n})
\ge b^{\B(n)} + 1 + \K\left(\Floor{ \tfrac{n}{2} }\right)\,.
$$
\end{proof}

\noindent \textbf{Remarks on Theorems \ref{ThBB} and \ref{ThS2}.}
\begin{enumerate}[(1)]

\item 
Since, by definition, $\K_i(n)\equiv i\pmod{b-1}$, we have
$\K_i(n) \ne \K_j(n)$ for $i \ne j$ from $\sI$.
It follows that the choice of $i \in \sI$ in~\eqref{EqGTM} is unique,
and so we may define a ``generalized
Thue--Morse sequence" $\left(\tau(n)\right)_{n \ge 1}$ for base $b$ by:
\beql{EqTAU}
\K(n) = \begin{cases}
 \K_{\tau(n)}(n), &  \text{if $b$ is even}\,,\\
 \K_{2\tau(n)}(n), &  \text{if $b$ is odd}\,,
\end{cases}
\eeq
where $0 \le \tau(n) \le b-2$ if $b$ is even, and
$0 \le \tau(n) \le \frac{b-3}{2}$ if $b$ is odd.
The name comes from the fact that in base $b=5$ this is
the classical Thue--Morse sequence (see Section~\ref{SecB5}).

From \eqref{EqTAU} and Theorems~\ref{Th131_1} and \ref{ThS2} it follows that
\beql{EqFP2}
c_{\tau(n),n} = 1 \mbox{~if~ $b$~is~even}, \quad
c_{2\tau(n),n} = 1 \mbox{~if~ $b$~is~odd}\,.
\eeq

\item 
 The $\K_i'(n)$ are not all distinct. It follows at once from \eqref{EqA11c} 
 that:
\begin{enumerate}[(i)]
\item if $b$ is even, the distinct $\K'_i(n)$ are
\beql{EqKp1}
\K'_i(n), ~\text{for~}0 \le i \le \frac{b-4}2, \text{~and~} \K'_{b-2}(n)\,,
\eeq
where the remaining values are given by  $\K'_i(n) = \K'_{b-i-3}(n)$;

\item if $b \equiv -1 \pmod{4}$, the distinct $\K'_i(n)$ are
\beql{EqKp2}
\K'_{2i}(n), ~\text{for~}0 \le i \le \frac{b-7}4, \text{~and~} \K'_{\frac{b-3}2}(n)\,,
\eeq
where the remaining values are given by  $\K'_{2i}(n) = \K'_{b-2i-3}(n)$; 
and

\item if $b \equiv 1 \pmod{4}$, the distinct $\K'_i(n)$ are
\beql{EqKp3}
\K'_{2i}(n), ~\text{for~}0 \le i \le \frac{b-5}4\,,
\eeq
where the remaining values are given by  $\K'_{2i}(n) = \K'_{b-2i-3}(n)$.
\end{enumerate}

In fact, the calculations for the case $b=4m-1$ are 
(apart from a relabeling of the variables)
essentially the same as the calculations for the case $b=2m$.
For $m=1$, this can be seen from Theorems~\ref{ThK2} and \ref{ThK3}, which establish similar recurrences for $\K(n)$ in bases $b=2$ and $b=3$.
We will formally prove this similarity for general $m$ in Theorem~\ref{Thm_b1b2} in the next section.

\item 
If $n$ is even, no minimization is needed in the formula \eqref{EqA11c} for $\K'_i(n)$,
since the two terms inside the braces are the same, and also 
$h_{i,n} = \tfrac{n}{2}$ for all $i \in \sI$.
If $n$ is odd, the two terms inside the braces in~\eqref{EqA11c} may
still coincide, including the case of $\K'_{i-2c_{i,n}}(n)$ (which happens always when $b=2,3$, quite frequently when 
$b\geq 4$ is even, and sometimes when $b\geq 5$ is odd).
While both $\Floor{ \tfrac{n}2 }$ and $\Ceil{ \tfrac{n}2 }$ in this case 
may serve as $h_{i,n}$ in~\eqref{hatBprop}, the choice of 
$h_{i,n}=\Floor{ \tfrac{n}2 }$ is dictated by Theorem~\ref{ThS2}, 
which expects the contribution of $\K_{i-2c_{i,n}}(h_{i,n})$ be as small 
as possible.

\item We initially thought that the minimization in~\eqref{hatB}
would be determined by choosing the index $i$ to be either 
$j$ or $-j-2$, where
$\K_j\left(\Ceil{ \tfrac{n}{2} }\right) = \K\left(\Ceil{ \tfrac{n}{2} }\right)$.
This would imply that
\beql{EqRed}
\min_{i \in \sI} \K_i'(n) ~ \stackrel{?}{=} ~ 
\K\left(\Ceil{ \tfrac{n}{2} }\right)+
\K_{\ell}\left( \Floor{ \tfrac{n}{2} } \right)\,,
\eeq
for some $\ell \in \sI$.
To prevent others from falling into this trap, we mention 
that~\eqref{EqRed} is false. There are counter-examples 
when $b=7$ and $n=13$, and when $b=9$ and $n=9$ 
(see details in~\ref{AppFlow}).
\end{enumerate}


\paragraph{Computing $\B(n)$ and $\K(n)$.} It may be helpful to summarize
the steps involved in using the recurrences to compute $\B(n)$ and $\K(n)$:
\begin{enumerate}[Step 1:]
\item For every $i\in\sI$, compute $\K'_i(n)$ from \eqref{EqA11c}, omitting
the duplicates mentioned in Remark~(2) above.

\item For every $i\in\sI$, compute $c_{i,n}$ and $h_{i,n}$ using \eqref{Eqci} and \eqref{Eqhi}, 
or using the equivalent formulas \eqref{EqciJ} and \eqref{EqhiJ} given below.

\item Compute $\B(n)$ with the formula 
\beql{EqFP1}
\B(n) ~=~ \frac{\K_{i-2c_{i,n}}'(n)+2}{b-1}\,,
\eeq
which follows from \eqref{hatBprop} and \eqref{EqS1}, 
and holds for any $i \in \sI$.

\item For every $i\in\sI$, compute $\K_i(n)$ using Theorem~\ref{ThS2}.
\item Finally, compute $\K(n)$ from \eqref{EqGTM}.
\end{enumerate}

Below we illustrate the computations for even $b \ge 4$, 
while in the next two sections we provide further information 
about bases $4$, $5$, $7$, and $10$.
Additional illustrations of the computation flow are given in~\ref{AppFlow}.

\vspace*{0.1in}
\noindent \textbf{Examples.}
We illustrate the computations using Theorems~\ref{ThBB} 
and \ref{ThS2} in the case when $b \ge 4$ is even and $n=2,3,4$.

For $n=2$, we find that $\B(2)= (2b-2)/(b-1)=2$, and
${\K}'_i(2) = 2b-4$, $c_{i,2}=1$ and $h_{i,2}=1$ for all $i$.
From this we obtain the values of $\K_i(2)$ that we saw in Theorem~\ref{ThKi2}.

For $n=3$,
we find that $\B(3)=(b^2+2b-3)/(b-1)=b+3$,
${\K}'_i(3)=b^2+2b-5$ and $c_{i,3}=1$ for all $i$,
and $h_{0,3}=2$, $h_{1,3}=h_{2,3}=h_{3,3}=\cdots = 1$.
From this we obtain the values of $\K_i(3)$ that we saw in Theorem~\ref{ThKi3}. 

For $n=4$, we find that ${\K}'_0(4) = 2b^2+2b-8$,
${\K}'_i(4) = 2b^2+2b-6$ for $i \ge 1$, $\B(4) = (2b^2+2b-4)/(b-1) = 2b+4$,
$c_{0,4}=2$, $c_{1,4}=1$, $c_{2,4}=3$, $c_{i,4}=1$ for $3 \le i \le b-2$, 
and all $h_{i,4}=2$.
Then 
\begin{align}\label{EqQQ4}
{}&{\K}_0(4)=2b^{2b+4}+b^2+2b-5\,, \nonumber \\
{}&{\K}_1(4)=b^{2b+4}+b^2+b-2\,, \nonumber \\
{}&{\K}_2(4) = 3b^{2b+4}+b^2+2b-4\,, \\
{}&{\K}_3(4)=b^{2b+4}+b^2+b\,, \nonumber \\
{}&{\K}_4(4)=b^{2b+4}+b^2+2\,, \nonumber 
\end{align}
and thus
\beql{EqK4}
{\K}(4) = {\K}_4(4) = b^{2b+4}+b^2+2\,.
\eeq
This confirms Kaprekar's conjecture of $10^{24}+102$ in base $10$.

Table~\ref{TabA1} summarizes the results from the recurrence
for $n \le 7$ and even bases $b \ge 4$ and odd bases $b \ge 7$.
(For smaller values of $b$, see Tables~\ref{TabK1}, 
\ref{TabK2}, \ref{TabK2a}.)

\begin{table}[htb]
$$
\begin{array}{|c|c|c||c|c|}
\hline
   & \multicolumn{2}{c|| }{\text{even~} b \ge 4}
   & \multicolumn{2}{c| }{\text{odd~} b \ge 7} \\
\cline{1-5}
n       &  \B(n) & \K(n)
   &  \B(n) & \K(n)   \\
\hline
1 &  - & 0
   & - & 0  \\
2 & 2 & b^2+1
   & 1 & b+1 \\
3 & b+3 & b^{b+3}+1
   & 2 & b^2+1  \\
4 & 2b+4 & b^{2b+4}+b^2+2
   & 3 & b^3+b+2  \\
5 & \frac{b^{b+3}+b^2+2b-4}{b-1} & b^{\B(5)}+b^2+2
   &b+3 & b^{b+3} + b+2    \\
6 & \frac{2 b^{b+3}+2b-4}{b-1} & b^{\B(6)}+b^{b+3}+2
   & 2b+3 & b^{2b+3}+b^2+2    \\
7 & \frac{b^{2b+4}+b^{b+3}+b^2+2b-5}{b-1} & b^{\B(7)}+b^{b+3}+2
   & b^2+2b+4 & b^{\B(7)}+b^2+2    \\
\hline
\end{array}
$$
\caption{ Values of $\B(n)$ and $\K(n)$ for $n \le 7$ and 
even bases $b \ge 4$,
odd bases $b \ge 7$. See also Fig.~\ref{fig:b4} and 
Figs.~\ref{fig:b6}--\ref{fig:b10} in~\ref{AppFlow}. }
\label{TabA1}
\end{table}

Let us define the set
\beql{EqJn}
J(n) := \{ j\in\sI\ \mid\ \min_{\ell\in\sI} \K_\ell\left(\Ceil{ \tfrac{n}{2} }\right)
+\K_{-\ell-2}\left(\Floor{ \tfrac{n}{2} }\right)\ \text{is attained at}\ \ell=j \}.
\eeq
Then $c_{i,n}$ and $h_{i,n}$ can be equivalently expressed as
\beql{EqciJ}
c_{i,n} ~=~ 
\text{~smallest~integer~} c\geq 1 \text{~such~that~} 
i-2c\in J(n) \text{~or~} 2c-i-2\in J(n)
\eeq
and
\beql{EqhiJ}
h_{i,n} = 
\begin{cases}
\Ceil{ \frac{n}{2} }, & \text{if}\ 2c_{i,n}-i-2\notin J(n);\\
\Floor{ \frac{n}{2} }, & \text{otherwise}\,.
\end{cases}
\eeq
In bases $b=2$ and $b=3$, we trivially have $J(n)=\sI=\{0\}$ for all $n\geq 2$.
In bases $b=4$ and $b\geq 6$, from Theorems~\ref{ThKi1}, \ref{ThKi2}, and \ref{ThK3} it can be verified that 
\beql{J23456}
\begin{split}
J(2) &=\sI, \\
J(3) &=\sI\setminus\{0\}, \\
J(4)=J(5)=J(6) &= \sI\setminus\{0,b-3\}. \\
\end{split}
\eeq
In particular, this implies that in bases $b=4$ and $b \ge 6$,
\begin{itemize}
\item $c_{i,2}=c_{i,3}=1$ and $h_{i,2}=h_{i,3}=1$ for all $i\in\sI$, with the exception of $h_{0,3}=2$ (as was already noted in the proof of Theorem~\ref{ThS2});
\item $c_{i,4}=c_{i,5}=c_{i,6}=1$, $h_{i,4}=h_{i,5}=2$ and $h_{i,6}=3$ for all $i\in\sI$, with the exception of $c_{0,4}=c_{0,5}=c_{0,6}=2$ and $c_{2,4}=c_{2,5}=c_{2,6}=3$.
\end{itemize}

For base $b=5$, we have $J(2)=J(4)=J(6)=\{0,2\}=\sI$ and $J(3)=J(5)=\{2\}$,\footnote{The apparent pattern does not continue here as $J(7)=\{0\}$.}
implying that $c_{i,2}=c_{i,3}=c_{i,4}=c_{i,5}=c_{i,6}=1$, $h_{i,2}=h_{i,3}=1$, $h_{i,4}=h_{i,3}=2$, and $h_{i,6}=3$ for all $i\in\sI$, with the exception of $h_{0,3}=2$ and $h_{0,5}=3$. More details can be found in Section~\ref{SecB5}.



\section{Quasi-positional representation for \texorpdfstring{$\K(n)$}{K(n)}}\label{SecQuasi}

Theorem~\ref{ThS2} suggests that $\K_i(n)$ can be expressed as a linear combination of terms
\beql{EqcB}
\cB(m) := b^{\B(m)} + 1
\eeq
and a single term $\K_{\beta}(1)$ for some $\beta\in\sI$. 
In this section we investigate properties of this representation and show that it resembles a conventional positional numeral system.


\begin{thm}\label{Thm_RepB}
For any $n\geq 1$ and $i\in\sI$, $\K_i(n)$ is uniquely represented as
\beql{EqRepK}
\K_i(n) = \alpha_1\cB(n_1) + \alpha_2\cB(n_2) + \dots + \alpha_t\cB(n_t) + \K_{\beta}(1)
\eeq
for some $t \ge 0$, where
\begin{itemize}
    \item $\alpha_1,\alpha_2,\dots,\alpha_{t}$ are integers from the interval $[1,b-1]$ if $b$ is even, or
the interval $[1,\tfrac{b-1}{2}]$ if $b$ is odd;
    \item $\beta\in\sI$; and
    \item if $t>0$, then $n_1 > n_2 > \dots > n_t$  with $n_j\in\left\{ \Floor{ \tfrac{n_{j-1}}{2} }, \Ceil{ \tfrac{n_{j-1}}{2} }\right\}$ 
    for $j=2,3,\dots,t$, and $n_t \in\{2,3\}$.
\end{itemize}
\end{thm}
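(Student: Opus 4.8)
The plan is to establish existence by unfolding the recurrence \eqref{EqS2} of Theorem~\ref{ThS2}, and uniqueness by repeatedly identifying the leading term through the standard representation of Lemma~\ref{Lem_mkc}. For existence, I would iterate \eqref{EqS2}. If $n=1$, take $t=0$ and $\beta=i$. If $n\ge 2$, Theorem~\ref{ThS2} gives $\K_i(n)=c_{i,n}\cB(n)+\K_{i-2c_{i,n}}(h_{i,n})$, so I set $\al_1=c_{i,n}$, $n_1=n$ and recurse on $\K_{i-2c_{i,n}}(h_{i,n})$. Since $h_{i,n}\in\{\Floor{n/2},\Ceil{n/2}\}$ and $n\ge 2$, the argument strictly decreases, so after finitely many steps it reaches $1$ and the process stops with a term $\K_\beta(1)$. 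By construction the resulting expression has the shape \eqref{EqRepK}: the indices obey $n_j\in\{\Floor{n_{j-1}/2},\Ceil{n_{j-1}/2}\}$ and $n_1>n_2>\cdots>n_t$. Each coefficient $\al_j=c_{i_{j-1},n_{j-1}}$ lies in the required interval because Theorem~\ref{ThS2} guarantees that every application of \eqref{EqS2} has the form \eqref{Eq_form_Ki}, whose coefficient satisfies exactly these bounds (Theorem~\ref{Th_form_Ki}). The last index $n_t$ is the last argument before $1$; since $1\in\{\Floor{n_t/2},\Ceil{n_t/2}\}$ forces $n_t\in\{2,3\}$, the final condition holds, and $\beta\in\sI$ is automatic as the running index never leaves $\sI$ (for odd $b$ it stays even).

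For uniqueness, the crucial step is a domination estimate. Writing any expression of the form \eqref{EqRepK} as $S=\al_1\cB(n_1)+R$ with $R:=\al_2\cB(n_2)+\cdots+\al_t\cB(n_t)+\K_\beta(1)$, I would show $0\le R<b^{\B(n_1)}$, so that $S=\al_1(b^{\B(n_1)}+1)+R$ is precisely the standard representation of Lemma~\ref{Lem_mkc} with $\m=\B(n_1)$, $c=\al_1$, $k=R$. This rests on the rapid growth of $\B$: since $n_2\le\Ceil{n_1/2}$, combining \eqref{EqS1} with \eqref{eq_hatB2} gives $\B(n_1)>b^{\B(n_2)}/(b-1)$, so $b^{\B(n_1)}$ is doubly exponential in $\B(n_2)$, whereas $R$ is at most $(b-1)$ times a sum of $O(\log n)$ terms each bounded by $b^{\B(n_2)}+1$ (using that $\B$ is strictly increasing on $n\ge 2$ by \eqref{EqS4}, whence $\B(n_2)>\B(n_3)>\cdots$). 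Thus $R$ is far smaller than $b^{\B(n_1)}$, and in particular lies within the range permitted for $k$ in Lemma~\ref{Lem_mkc} (even allowing for the $b^{\B(n_1)}-b+1$ bound when $\al_1=b-1$).

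With this estimate, uniqueness becomes injectivity of the evaluation map on tuples satisfying the stated constraints, proved by induction on $t$. The base case $t=0$ holds because $\K_\beta(1)\equiv\beta\pmod{b-1}$ determines $\beta$. For the inductive step, the uniqueness of the representation \eqref{EqStd} (Lemma~\ref{Lem_mkc}) forces any two expressions for $S$ to share the same $\B(n_1)$, $\al_1$, and tail value $R$; injectivity of $\B$ on $n\ge 2$ then recovers $n_1$, and the inductive hypothesis applied to $R$, a valid expression of the form \eqref{EqRepK} with $t-1$ terms, recovers the remaining data. The main obstacle is the domination estimate $R<b^{\B(n_1)}$; once it is in place, the whole argument reduces to the already-established uniqueness of \eqref{EqStd}.
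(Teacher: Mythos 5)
Your proposal follows the same skeleton as the paper's proof: existence by unfolding Theorem~\ref{ThS2} exactly as you describe, and uniqueness by induction on $t$, reducing to the uniqueness of the standard representation \eqref{EqStd} once the leading split $\al_1(b^{\B(n_1)}+1)+R$ is shown to satisfy the range condition \eqref{Eqk}. The only real divergence is in how you bound the tail $R$, and there your estimate is looser than it needs to be. You bound $R$ by (number of terms) times (largest term), roughly $(b-1)\cdot O(\log n)\cdot(b^{\B(n_2)}+1)$, and invoke the doubly-exponential growth $\B(n_1)>b^{\B(n_2)}/(b-1)$ from \eqref{EqS1} and \eqref{eq_hatB2} to dominate it. As literally stated this does not close in the smallest cases: for odd $b\geq 5$ with $n_2=2$ we have $\B(n_2)=1$, your growth estimate only gives $\B(n_1)\geq 2$, so $b^{\B(n_1)}$ can be exactly $b^2$ (e.g.\ $n_1=3$), while your bound on $R$ reads $(b-1)(b+1)+(b-3)=b^2+b-4>b^2$. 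The repair is available inside your own setup — for odd $b$ the coefficients are at most $\tfrac{b-1}{2}$, which you state in the existence part but drop in the estimate, and this brings $R$ below $b^2$ — but it shows the domination argument genuinely needs the parity-dependent coefficient bounds and a careful term count, not just the asymptotic statement. The paper sidesteps all of this with a structural bound: by the inductive hypothesis the tail $k$ is itself a representation of the form \eqref{EqStd} with leading exponent $\B(n_2)$, hence $k\leq \al_2(b^{\B(n_2)}+1)+b^{\B(n_2)}\leq b^{\B(n_2)+1}+b-1$, and the modest gap $\B(n_1)\geq\B(n_2)+2$ from \eqref{EqS4} already yields $k\leq b^{\B(n_1)}-b+1$, uniformly in $b$ and with no doubly-exponential input. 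If you replace your counting estimate by this one-line inductive bound (treating $t=1$ separately via Theorem~\ref{ThKi1}), your proof coincides with the paper's.
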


\begin{proof} 
If $n=1$, we set $t:=0$ and $\beta:=i$.
If $n\geq 2$, we set $n_1:=n$. From Theorem~\ref{ThS2} it follows that $\K_i(n_1) = \alpha_1\cB(n_1) + \K_{i-2\alpha_1}(n_2)$, where $\alpha_1 := c_{i,n_1}$ and $n_2:=h_{i,n_1}\in\left\{ \Floor{ \tfrac{n_1}{2} }, \Ceil{ \tfrac{n_1}{2} }\right\}$. 
If $n_2>1$, we represent $\K_{i-2\alpha_1}(n_2)$ in a similar way, and continue the process until we get a representation \eqref{EqRepK},
where $n_j := h_{i-2(\alpha_1 + \alpha_2 + \dots + \alpha_{j-2}),n_{j-1}}$ and $\alpha_j := c_{i - 2(\alpha_1 + \alpha_2 + \dots + \alpha_{j-1}),n_j}$ for $j\geq 2$,
and $\beta:=i - 2(\alpha_1 + \alpha_2 + \dots + \alpha_t)$.
The properties of $\alpha_j$ and $n_j$ easily follow from this construction.

We prove that the representation \eqref{EqRepK} is unique by induction on $t$. For $t=0$, the statement follows from Theorem~\ref{ThKi1} as all $K_{\beta}(1)$ for $\beta\in\sI$ are distinct.
For $t\geq 1$, we let $k:=\K_i(n) - \alpha_1\cB(n_1)$ and show that $\alpha_1(b^{\B(n_1)} + 1) + k$ has the form~\eqref{EqStd}.
Indeed, if $t=1$, then 
\begin{itemize}
\item when $b$ is even, $k = K_{\beta}(1)\leq 2b-4$ (by Theorem~\ref{ThKi1}) and $\B(n_1)\geq \B(2)=2$ (by Theorem~\ref{ThK2}), implying that $k\leq 2b-4 \leq b^2 - b + 1\leq b^{\B(n_1)} - b + 1$
satisfying \eqref{Eqk};
\item when $b$ is odd, $k = K_{\beta}(1)\leq b-3$ (by Theorem~\ref{ThKi1}), $\alpha_1\leq \frac{b-1}2 < b-1$, and $\B(n_1)\geq \B(2)=1$ (by Theorem~\ref{ThK2}), 
implying that $k\leq b-3 \leq b\leq b^{\B(n_1)}$ satisfying \eqref{Eqk}.
\end{itemize}
Finally, if $t\geq 2$, then $n_1\geq 4$ and $n_1-n_2\geq 2$, implying by \eqref{EqS4} that $\B(n_1)\geq \B(n_2)+2$. Then by induction $k=\alpha_2(b^{\B(n_2)} + 1) + \cdots$ has the form~\eqref{EqStd} and since
$\alpha_2\leq b-1$, we get
$$k\leq \alpha_2(b^{\B(n_2)} + 1) + b^{\B(n_2)} \leq b^{\B(n_2)+1} + b - 1 \leq b^{\B(n_1)} - b + 1,$$
satisfying \eqref{Eqk}.
That is, we have proved that $\alpha_1(b^{\B(n_1)} + 1) + k$ has the form~\eqref{EqStd}, and then the uniqueness of the representation \eqref{EqRepK} follows from that of \eqref{EqStd}.
\end{proof}

Our proof suggests that the representation \eqref{EqRepK} can be viewed as the result of iterative application of Lemma~\ref{Lem_mkc}, which enables positional comparison of such representations.
In particular, for bounding purposes we will find it convenient to introduce a generic notation $O_{\cB}(n_1)$ for the right-hand side of \eqref{EqRepK} (when $t\geq 1$).

We will need the following lemma.

\begin{lem}\label{LemSum2K}
Let $n\geq 3$. Then for any $x,y\in\{1,2,\dots,n-1\}$ and any $i,j\in\sI$,
$$
\K_i(x) + \K_j(y) < \cB(n).
$$
\end{lem}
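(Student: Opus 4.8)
The plan is to bound $\K_i(x)+\K_j(y)$ by its largest possible value and then compare against $b^{\B(n)}$ using the gap between consecutive values of $\B$. First I would record that each sequence $\left(\K_i(m)\right)_{m\ge 1}$ is nondecreasing: this follows from \eqref{EqKsplit}, which (once Theorem~\ref{ThBB} is fully established) holds for all $m\ge 1$, since $\K_i(m+1) > \K_i(m) + \K_{-i-2}(m) \ge \K_i(m)$. Because $x,y \le n-1$, monotonicity gives $\K_i(x)+\K_j(y) \le \K_i(n-1)+\K_j(n-1)$, so it suffices to prove $\K_i(n-1)+\K_j(n-1) < \cB(n) = b^{\B(n)}+1$. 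The advantage of this reduction is that the arguments are now $n-1 \ge 2$ (using $n\ge 3$), which sidesteps the fact that $\B(1)$ is undefined and lets me invoke the uniform upper bound of Theorem~\ref{Th_Ki_bounds}. In particular, no separate treatment of the cases $x=1$ or $y=1$ is needed, since the monotonicity reduction absorbs them.

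Next I would apply Theorem~\ref{Th_Ki_bounds} to each term, obtaining $\K_i(n-1) < \beta\, b^{\B(n-1)+1}$ and likewise for $\K_j(n-1)$, where $\beta = 1$ for even $b$ and $\beta = \tfrac12$ for odd $b$. Adding these gives $\K_i(n-1)+\K_j(n-1) < 2\beta\, b^{\B(n-1)+1}$, and it then remains to verify $2\beta\, b^{\B(n-1)+1} \le b^{\B(n)}$, which I would deduce from the growth estimate \eqref{EqS4}. For even $b$, \eqref{EqS4} gives $\B(n) \ge \B(n-1)+2$, so $b^{\B(n)} \ge b\cdot b^{\B(n-1)+1} \ge 2\,b^{\B(n-1)+1} = 2\beta\,b^{\B(n-1)+1}$ (using $b\ge 2$). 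For odd $b$ we have $2\beta=1$, and \eqref{EqS4} gives $\B(n)\ge \B(n-1)+1$ in every case, so $2\beta\,b^{\B(n-1)+1} = b^{\B(n-1)+1} \le b^{\B(n)}$. In both parities the strict inequality on the sum then yields $\K_i(n-1)+\K_j(n-1) < b^{\B(n)} < \cB(n)$, as required.

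I expect the only delicate point to be the bookkeeping of parities. The odd-base exception in \eqref{EqS4} at $n\in\{3,4\}$ provides only a jump of $1$ in $\B$, which on its own would be too weak to absorb the sum of two terms; but it is compensated exactly by the sharper constant $\beta=\tfrac12$ available for odd $b$ in Theorem~\ref{Th_Ki_bounds}. Ensuring that these two halves of the argument line up correctly --- the factor $b$ gained from \eqref{EqS4} in the even case versus the factor $\tfrac12$ gained from $\beta$ in the odd case --- is the crux of the proof; everything else is a direct application of the two cited results.
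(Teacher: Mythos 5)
Your proof is correct, and its quantitative core coincides with the paper's: both arguments rest on the upper bound of Theorem~\ref{Th_Ki_bounds} combined with the growth inequality \eqref{EqS4}, balanced across parities exactly as you describe (the factor $\beta=\tfrac12$ for odd $b$ compensating the jump of only $1$ at $n\in\{3,4\}$, and the jump of $2$ compensating $\beta=1$ for even $b$). The difference lies in how small arguments are handled. The paper bounds each of $\K_i(x)$ and $\K_j(y)$ separately by $\tfrac12\cB(n)$, treating $x=1$ as a special case via the explicit values $\K_i(1)\le 2b-4$ from Theorems~\ref{ThKi1} and \ref{ThK3}, and for $x\ge 2$ chaining $\K_i(x)<\beta\, b^{\B(x)+1}\le \beta\, b^{\B(n-1)+1}$ through the monotonicity of $\B$. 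You instead reduce at the outset to the extremal case $x=y=n-1$ by invoking strict monotonicity of each sequence $\left(\K_i(m)\right)_{m\ge 1}$, extracted from \eqref{EqKsplit}. That reduction is legitimate: \eqref{EqKsplit} does hold for all $m\ge 1$ once Theorem~\ref{ThBB} is fully established (for $m=1$ it is checked directly, and for $m\ge 2$ it needs only $\eqref{EqS4}_{m+1}$), and Lemma~\ref{LemSum2K} sits downstream of that theorem, so there is no circularity. The one stylistic caveat is that \eqref{EqKsplit} is an inequality established inside the proof of Theorem~\ref{ThBB} rather than a standalone stated result, so strictly one should either restate it or rederive monotonicity from Theorem~\ref{Th_Ki_bounds} and \eqref{EqS4} (which works for $m\ge 2$, with $m=1$ checked directly --- reintroducing a small case analysis). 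What your route buys is the elimination of the $x=1$ special case; what the paper's route buys is reliance only on results stated as theorems.
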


\begin{proof}
From Theorems~\ref{ThKi1} and \ref{ThK3}, it follows that for any $i\in\sI$
$$\K_i(1) \leq 2b-4 < \frac{b^2+1}{2} \leq \frac{1}{2}\cB(3) \leq \frac{1}{2}\cB(n).$$

For $x>1$ and any $i\in\sI$, from Theorems~\ref{Th_Ki_bounds} and \ref{ThBB}, it follows that for odd $b$
$$\K_{i}(x) < \frac{1}{2}b^{\B(x)+1} < \frac{1}{2}b^{\B(n-1)+1} \leq \frac{1}{2}b^{\B(n)} < \frac{1}{2}\cB(n),$$
while for even $b$ 
$$\K_{i}(x) < b^{\B(x)+1} < b^{\B(n-1)+1} \leq \frac{1}{2}b^{\B(n-1)+2} \leq \frac{1}{2}b^{\B(n)} < \frac{1}{2}\cB(n).$$

Hence in all cases,
$$\K_{i}(x) + \K_j(y) < \frac{1}{2}\cB(n) + \frac{1}{2}\cB(n) = \cB(n).$$
\end{proof}

The following theorem shows that the quasi-positional representations for $\K(n)$ in bases $b_1=2m$ 
and $b_2=4m-1$ are essentially the same.
We use superscripts $(b_1)$ and $(b_2)$ to distinguish 
between the bases.


\begin{thm}\label{Thm_b1b2}
For an integer $m\geq 1$, let $b_1:=2m$ and $b_2:=4m-1$.
We identify the additive groups
$\sI^{(b_1)}$ (consisting of the residues modulo $2m-1$) 
and $\sI^{(b_2)}$ (consisting of the even residues modulo $2(2m-1)$).\footnote{More formally, we may define an isomorphism 
of additive groups $\pi:\ \sI^{(4m-1)} \rightarrow \sI^{(2m)}$ by
$\pi([i\bmod 2(2m-1)]) = [i\bmod (2m-1)]$.}
Then for any $n\geq 1$ and 
any $i\in \sI = \sI^{(b_1)} \cong \sI^{(b_2)}$, $\K_i^{(b_1)}(n)$ has a 
representation in the form~\eqref{EqRepK}
$$\K_{i}^{(b_1)}(n) = \alpha_1\cB^{(b_1)}(n_1) + 
\alpha_2\cB^{(b_1)}(n_2) + \dots + \alpha_t\cB^{(b_1)}(n_t) + 
\K_{\beta}^{(b_1)}(1)$$
if and only if $\K_i^{(b_2)}(n)$ has a representation:
$$\K_i^{(b_2)}(n) = \alpha_1\cB^{(b_2)}(n_1) + 
\alpha_2\cB^{(b_2)}(n_2) + \dots + \alpha_t\cB^{(b_2)}(n_t) + 
\K_{\beta}^{(b_2)}(1).$$
\end{thm}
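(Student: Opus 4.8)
The plan is to prove the statement by strong induction on $n$, showing that the \emph{entire combinatorial data} $(t,\alpha_1,\dots,\alpha_t,n_1,\dots,n_t,\beta)$ of the representation \eqref{EqRepK} is base-independent, i.e., identical for $b_1$ and $b_2$ once $\sI^{(b_1)}$ and $\sI^{(b_2)}$ are identified via $\pi$. The numerical values $\B^{(b_1)}(n)$ and $\B^{(b_2)}(n)$ will differ, but only the index multiset $\{n_j\}$ and the coefficients $\alpha_j$ enter the claim, not the magnitudes $b^{\B}$. The engine of the recursion is Theorem~\ref{ThS2}, which gives $\K_i(n)=c_{i,n}\cB(n)+\K_{i-2c_{i,n}}(h_{i,n})$; hence it suffices to show that $c_{i,n}$, $h_{i,n}$, and the successor index $i-2c_{i,n}$ agree in both bases. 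The compatibility of the index arithmetic is immediate, since $\pi$ is a group isomorphism $\sI^{(b_2)}\to\sI^{(b_1)}$. The decisive numerical coincidence is that the admissible coefficient ranges agree: $[1,b_1-1]=[1,2m-1]=[1,\tfrac{b_2-1}{2}]$, so an $\alpha\le b_1-1$ for the even base is exactly an admissible $\alpha\le\tfrac{b_2-1}{2}$ for the odd base. This is precisely what makes the pairing $b_1=2m$, $b_2=4m-1$ work.

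The base cases $n\in\{1,2,3\}$ I would check directly from the explicit formulas in Theorems~\ref{ThKi1}, \ref{ThKi2}, and \ref{ThKi3}, which give $c_{i,n}=1$ and $h_{i,n}=1$ (with the single exception $h_{0,3}=2$) uniformly for all $b\ge4$, and which identify the symbols $\K_{\beta}(1)$ across bases under $\pi$. For the inductive step I would reduce everything to showing that the set $J(n)$ of \eqref{EqJn} is the same in both bases: once $J(n)$ agrees, formulas \eqref{EqciJ} and \eqref{EqhiJ} determine $c_{i,n}$ and $h_{i,n}$ purely from membership in $J(n)$, so these agree too, and Theorem~\ref{ThS2} then propagates the induction. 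Computing $J(n)$ amounts to comparing the quantities $\K_{\ell}(\Ceil{n/2})+\K_{-\ell-2}(\Floor{n/2})$ over $\ell\in\sI$ and selecting the minimizers.

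The crux is a \emph{positional comparison principle}: for any arguments $x_1,x_2,x_3,x_4<n$ and any indices in $\sI$, the sign of $\bigl(\K_{i_1}(x_1)+\K_{i_2}(x_2)\bigr)-\bigl(\K_{i_3}(x_3)+\K_{i_4}(x_4)\bigr)$ is the same for $b_1$ and $b_2$. I would prove this by viewing each $\K_i(x)$ through its representation \eqref{EqRepK}, which by the induction hypothesis carries identical combinatorial data in both bases, and comparing the two sides term by term using Lemma~\ref{Lem_mkc}. The key enabling fact is Lemma~\ref{LemSum2K}: the sum of any two $\K$-values with arguments below a given level is strictly smaller than the place value $\cB$ one level up. Consequently, the leading place present (the largest $\B(x_j)$, whose ordering is base-independent because $\B$ is strictly increasing in both bases by \eqref{EqS4}) carries an unambiguous \emph{integer} coefficient, while every lower-order contribution stays strictly below it. The comparison then cascades positionally, its outcome at each step governed only by the integer coefficients $c_{\cdot,\cdot}$ and the combinatorial structure, both base-independent by induction. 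Feeding this principle into the candidate sums $\K_{\ell}(\Ceil{n/2})+\K_{-\ell-2}(\Floor{n/2})$ shows that their relative order over $\ell\in\sI$ is identical in both bases, so $J(n)$ coincides and the inductive step closes.

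The main obstacle is making the comparison principle rigorous in the presence of additive carries: adding two quasi-positional numbers can produce a digit sum as large as $2(b-1)$ in the leading place, and one must ensure this never spills upward in a base-sensitive manner. This is exactly where Lemma~\ref{LemSum2K}, together with Lemma~\ref{lem_bound} bounding the trailing $\K_{\beta}(1)$ and lower-order terms against $b^{\B}-b+1$, does the work: every partial sum retains a well-defined form \eqref{EqStd} with a base-independent leading coefficient, so comparing the actual integers reduces to comparing total coefficients first and tails recursively. A secondary technical point is the bookkeeping of the index map $i\mapsto i-2c_{i,n}$ through $\pi$, including the small-$n$ exception $h_{0,3}=2$ and the parity split between even and odd $n$ in the definition \eqref{EqA11c} of $\K'_i(n)$; these I would dispatch uniformly, since the minimum in \eqref{EqA11c} is taken over the same combinatorial alternatives in both bases.
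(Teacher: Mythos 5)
Your proposal is correct and follows essentially the same route as the paper's own proof: induction on $n$, reduction of the inductive step to the base-independence of $J(n)$ via \eqref{EqciJ} and \eqref{EqhiJ}, a comparison principle for sums $\K_{i}(x)+\K_{j}(y)$ (the paper's statement $(\star)$, proved there by an inner induction on the largest argument, expanding one level of Theorem~\ref{ThS2} at a time rather than the full representation \eqref{EqRepK}), with Lemma~\ref{LemSum2K} playing exactly the role you assign it of keeping trailing sums below the leading place value. The only cosmetic difference is bookkeeping: the paper anchors the induction at $n\leq 6$ using the explicit sets \eqref{J23456} instead of your $n\leq 3$ base cases, but both suffice.
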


\begin{proof}
The proof is by induction on $n$. For $n=1$, the statement follows 
from Theorem~\ref{ThKi1}, which gives 
$\K_{\beta}^{(b_1)}(1) = \K_{\beta}^{(b_2)}(1)$ for all $\beta\in\sI$. 

Let $n\geq 2$. For any $i\in \sI$, Theorem~\ref{ThS2} gives
$$\K_i^{(b_1)}(n) = c^{(b_1)}_{i,n}\cB^{(b_1)}(n) + \K^{(b_1)}_{i-2c^{(b_1)}_{i,n}}(h^{(b_1)}_{i,n})$$
and
$$\K_{i}^{(b_2)}(n) = c^{(b_2)}_{i,n}\cB^{(b_2)}(n) + \K^{(b_2)}_{i-2c^{(b_2)}_{i,n}}(h^{(b_2)}_{i,n}).$$
The statement will follow by induction if we show that 
$c^{(b_1)}_{i,n} = c^{(b_2)}_{i,n}$ and 
$h^{(b_1)}_{i,n} = h^{(b_2)}_{i,n}$.
In each base, the values of $c_{i,n}$ and $h_{i,n}$ are 
entirely determined by the set of indices $J(n)\subset\sI$ 
defined in~\eqref{EqJn}. It is therefore sufficient to show that 
$J^{(b_1)}(n)=J^{(b_2)}(n)$.
This equality holds for $n\leq 6$ as established 
by~\eqref{J23456} (notice that $b_1 - 3 = b_2 - 3$ as 
residues in $\sI$). For the rest of the proof we assume that $n\geq 7$.

The set $J(n)$ consists of the indices $j$ producing the 
minimum of 
$\K_j\left(\Ceil{\tfrac{n}{2}}\right) + 
\K_{-j-2}\left(\Floor{\tfrac{n}{2}}\right)$. 
Hence we need to show that comparing two such sums  
gives the same result (``$<$'',  ``$=$'', or ``$>$'')  in the two bases.
More generally, we will prove the following statement:
\begin{itemize}
    \item[$(\star)$]
For any positive integers $u,v,u',v'$ less than $n$ and any $i,j,i',j'\in\sI$,
the result of comparing $X^{(b)}:=\K^{(b)}_i(u) + \K^{(b)}_j(v)$ and $Y^{(b)}:=\K^{(b)}_{i'}(u') + \K^{(b)}_{j'}(v')$ is the same in bases $b=b_1$ and $b=b_2$. 
\end{itemize}
Without loss of generality we assume that $\max\{u,v,u',v'\}=u$,
and prove the statement by induction on $u$.
For $u=1,2$,  $(\star)$ follows from Theorems~\ref{ThKi1} and \ref{ThKi2}.
Suppose now that $2<u<n$. 

Let $(x,k)\in\{ (u,i), (v,j), (u',i'), (v',j') \}$. 
If $x=u$, we have $x<n$, and by induction on $n$,
$c_{k,x}^{(b_1)} = c_{k,x}^{(b_2)} =: \alpha_{k,x}$ and 
$h_{k,x}^{(b_1)} = h_{k,x}^{(b_2)} =: w_{k,x}$. 
Then by Theorem~\ref{ThS2}, 
\beql{EqKbkx}
\K^{(b)}_k(x) = \alpha_{k,x}\cB^{(b)}(u) + 
\K^{(b)}_{k-2\alpha_{k,x}}(w_{k,x}),\qquad b\in\{b_1,b_2\}.
\eeq
If $x<u$, we obtain the same representations \eqref{EqKbkx} by setting $\alpha_{k,x} := 0$ and $w_{k,x}:=x$.
Notice that in all cases, we have $w_{k,x} < u$.

Using the representations \eqref{EqKbkx}, we get
$$X^{(b)} = (\alpha_{i,u} + \alpha_{j,v})\cB^{(b)}(u) + \K^{(b)}_{i-2\alpha_{i,u}}(w_{i,u}) + \K^{(b)}_{j-2\alpha_{j,v}}(w_{j,v})$$
and
$$Y^{(b)} = (\alpha_{i',u'} + \alpha_{j',v'})\cB^{(b)}(u) + \K^{(b)}_{i'-2\alpha_{i',u'}}(w_{i',u'}) + \K^{(b)}_{j'-2\alpha_{j',v'}}(w_{j',v'}).$$

By Lemma~\ref{LemSum2K}, the sum of two $\K$'s in $X^{(b)}$ is smaller than $\cB^{(b)}(u)$, and so is the sum of two $\K$'s in $Y^{(b)}$.
Hence, if $\alpha_{i,u} + \alpha_{j,v}$ and $\alpha_{i',u'} + \alpha_{j',v'}$ are not equal,
then their comparison completely determines the result of comparison of $X^{(b)}$ and $Y^{(b)}$ in each base $b$. 

On the other hand, if $\alpha_{i,u} + \alpha_{j,v} = \alpha_{i',u'} + \alpha_{j',v'}$, then the result of comparison of $X^{(b)}$ and $Y^{(b)}$ is determined by comparison of 
$\K^{(b)}_{i-2\alpha_{i,u}}(w_{i,u}) + \K^{(b)}_{j-2\alpha_{j,v}}(w_{j,v})$
and $\K^{(b)}_{i'-2\alpha_{i',u'}}(w_{i',u'}) + \K^{(b)}_{j'-2\alpha_{j',v'}}(w_{j',v'})$, which is the same for $b=b_1$ and $b=b_2$ by induction (since all $w$'s are smaller than $u$).
This completes the proof of statement $(\star)$, which further implies that $J(n)$ is the same for $b=b_1$ and $b=b_2$, and thus proves the theorem by induction for all $n\geq 1$.
\end{proof}

Theorem~\ref{Thm_b1b2} explains why the flow-charts 
in~\ref{AppFlow} for
$b=2$ and $b=3$ (Figs.~\ref{fig:b2} and \ref{fig:b3})
are the same apart from the labels, 
as are the flow-charts for $b=4$ and $b=7$
(Figs.~\ref{fig:b4} and \ref{fig:b7}).



\section{\texorpdfstring{$\K(n)$}{K(n)} for bases \texorpdfstring{$4$}{4},
\texorpdfstring{$5$}{5}, and \texorpdfstring{$7$}{7}}\label{SecB5}

\begin{table}[htb]
$$
\begin{array}{|c|ccccccc|}
\hline
n       &  \K'_0(n) & \B(n) & h_{0,n} & h_{2,n} & \K_0(n) & & \K_2(n)   \\
\hline
1 & - & - & - & -   & \mathbf{0} & \DR & 2 \\
2 & 2 & 1 & 1 & 1       & 5+3 & \DR & \mathbf{5+1} \\
3 & 6 & 2 & 2 & 1 &5^2+7 & \DL   & \mathbf{5^2+1}  \\
4 & 14 & 4 & 2 & 2   & \mathbf{5^4+7} & \DR & 5^4+9 \\
5 & 34 & 9 & 3 & 2  & 5^9+27 & \DL & \mathbf{5^9+9}  \\
6 & 58 & 15 & 3 & 3      & \mathbf{5^{15}+27} & \DL & 5^{15}+33  \\
7 & 658 & 165 & 3 & 4       & \mathbf{5^{165}+27} & \DR & 5^{165}+633 \\
8 & 1266 & 317 & 4 & 4   & 5^{317}+635 & \DR & \mathbf{5^{317}+633} \\
9 & 5^9+5^4+16 & 488442 & 5 & 4      & 5^{\B(9)}+5^9+10 & \DL &
\mathbf{5}^{\Bbi \mathbf{(9)}}\mathbf{+5^4+8} \\
10 & 2\cdot 5^9+36 & 976572 & 5 & 5 & \mathbf{5}^{\Bbi \mathbf{(10)}}\mathbf{+5^9+10} & \DL & 5^{\B(10)}+5^9+28  \\      \hline
\end{array}
$$
\caption{Base $5$: $\K(n)$ (shown in bold font) is the smaller of the entries in the last two columns.
The meaning of the arrows is explained in the text. See also Fig.~\ref{fig:b5} in~\ref{AppFlow}.}
\label{TabK5}
\end{table}

We discuss base $5$ first, since this turns out to be simpler 
than bases $4$ or $7$.

For $b=5$, the index set is $\sI = \{0,2\}$.  From
\eqref{EqKp1}, there is only one $\K'_i(n)$ to
consider, namely 
\beql{Eqh4}
\K'_0(n) ~=~
 \min ~
\left\{
\K_0\left(\Ceil{ \tfrac{n}{2} }\right)+\K_{2}\left(\Floor{ \tfrac{n}{2} }\right)
,~
\K_0\left(\Floor{ \tfrac{n}{2} }\right)+\K_{2}\left(\Ceil{ \tfrac{n}{2} }\right)
 \right\}\,.
\eeq
Then $\B(n)= (\K'_0(n)+2)/4$,  $c_{0,n}=c_{2,n} = 1$ for all $n$,
\beql{Eqh5}
h_{0,n} ~=~
\begin{cases}
\Ceil{ \frac{n}{2} },
  &\text{if~~}
 \K_{0}\left(\Floor{ \tfrac{n}{2} }\right) +
 \K_{2}\left(\Ceil{ \tfrac{n}{2} }\right)
 <
 \K_{0}\left(\Ceil{ \tfrac{n}{2} }\right) +
 \K_{2}\left(\Floor{ \tfrac{n}{2} }\right), \\
 \Floor{ \frac{n}{2} }\,,
&\text{otherwise}\,, \\
\end{cases}
\eeq
and $h_{2,n}= n-h_{0,n}$. Also
\beql{Eqdagger}
\K_0(n) ~=~ 5^{\B(n)}+\K_2(h_{0,n})+1, ~~
\K_2(n) ~=~ 5^{\B(n)}+\K_0(h_{2,n})+1\,,
\eeq
and $\K(n) = \min \{ \K_0(n), \K_2(n) \}$.
The initial values of these variables are shown in Table~\ref{TabK5}.
The value of $\K(n)$ is shown in bold font (the
first $100$ values of $\B(n)$ and $\K(n)$ can be found in 
\seqnum{A230868}\label{A230868} and \seqnum{A230867}\label{A230867b}). 
The symbol in the
penultimate column of the table indicates the choice made
in~\eqref{Eqh4} when calculating $\K'_0(n)$ for odd $n$.
An arrow $\DR$ in row $i$ indicates that 
$\K'_0(2i+1) =
\K_0(i)+\K_{2}(i+1)$,
while an arrow $\DL$ indicates that
$\K'_0(2i+1) = \K_0(i+1)+\K_{2}(i)$.\footnote{The arrows are intended to suggest,
for the four elements $\K_0(i), \K_2(i), \K_0(i+1), \K_2(i+1)$,
whether it is better to pair up the North West and South East
entries, or the North East and South West
entries.}

The values of the generalized Thue--Morse sequence
$\tau(n)$ (see \eqref{EqTAU}) are shown in Table~\ref{TabTM2}.
\begin{table}[htbp]
$$
\begin{array}{|c|rrrrrrrrrrrrrrrr|}
\hline
n & 1 & 2 & 3 & 4 & 5 & 6 & 7 & 8 & 9 & 10 & 11 & 12 & 13 & 14 & 15 & 16  \\
\hline
\tau(n) & 0 & 1 & 1 & 0 & 1 & 0 & 0 & 1 & 1 & 0 & 0 & 1 & 0 & 1 & 1 & 0 \\
\hline
\end{array}
$$
\caption{Initial values of $\tau(n)$ (this is essentially the 
Thue--Morse sequence
\seqnum{A010060}\label{A010060b}). }
\label{TabTM2}
\end{table}

In this case $\tau(n)$ actually {\em is} 
the classical Thue--Morse sequence, 
except shifted by one step.\footnote{The classical sequence is $\tau(n-1)$.}
We prove this in the next theorem.

\begin{thm}\label{ThTM}
For $n \ge 1$, 
$$
\tau(n) = \begin{cases}
\tau(\Ceil{ \tfrac{n}{2} }), & \text{if $n$ is odd}\,;\\ 
1-\tau(\tfrac{n}{2}), & \text{if $n$ is even}\,.
\end{cases}
$$
\end{thm}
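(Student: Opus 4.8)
The plan is to prove the two recurrences for $\tau(n)$ directly from the structural results of Section~\ref{SecBb}, using the base-$5$ specialization recorded in~\eqref{Eqh4}--\eqref{Eqdagger}. Recall that for $b=5$ we have $\sI=\{0,2\}$, and by~\eqref{EqTAU} the value $\tau(n)$ is determined by which of $\K_0(n)$, $\K_2(n)$ is the smaller: $\tau(n)=0$ means $\K(n)=\K_0(n)$, while $\tau(n)=1$ means $\K(n)=\K_2(n)$. Since $\B(n)$ is common to both $\K_0(n)$ and $\K_2(n)$ in~\eqref{Eqdagger}, the comparison reduces to comparing the lower-order terms $\K_2(h_{0,n})+1$ versus $\K_0(h_{2,n})+1$, i.e. $\K_2(h_{0,n})$ versus $\K_0(h_{2,n})$. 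So the whole proof is a bookkeeping exercise tracking how $h_{0,n}$ and $h_{2,n}=n-h_{0,n}$ depend on parity and on the values $\tau$ at smaller arguments.

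First I would handle the odd case $n$ odd. Here $\{\Floor{n/2},\Ceil{n/2}\}=\{(n-1)/2,(n+1)/2\}$ are distinct, so by Corollary~\ref{CorKexp} the two sums inside~\eqref{Eqh4} are genuinely comparable and $h_{0,n}$ is pinned down by~\eqref{Eqh5}. The goal is to show $\tau(n)=\tau(\Ceil{n/2})$. The idea is that the optimal pairing in~\eqref{Eqh4} always matches the smaller of $\{\K_0(\Ceil{n/2}),\K_2(\Ceil{n/2})\}$ with the larger of the two quantities at argument $\Floor{n/2}$ (this is exactly the content of the ``exponential type'' minimization in Lemma~\ref{Lem4}(ii)). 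Consequently $h_{0,n}$ lands on $\Ceil{n/2}$ precisely when $\K_0(\Ceil{n/2})>\K_2(\Ceil{n/2})$, i.e. when $\tau(\Ceil{n/2})=1$; tracing this through~\eqref{Eqdagger} and comparing $\K_0(n)$ with $\K_2(n)$ then yields $\tau(n)=\tau(\Ceil{n/2})$. The parity facts $\K(n)$ even (Lemma~\ref{Lem1}) and $c_{0,n}=c_{2,n}=1$ keep the comparison clean.

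Next I would handle the even case $n=2r$. Now $\Floor{n/2}=\Ceil{n/2}=r$, so by Remark~(3) after Theorem~\ref{ThS2} no minimization is needed: $h_{0,n}=h_{2,n}=r$, and~\eqref{Eqdagger} gives $\K_0(n)=5^{\B(n)}+\K_2(r)+1$ and $\K_2(n)=5^{\B(n)}+\K_0(r)+1$. Hence $\K_0(n)<\K_2(n)$ iff $\K_2(r)<\K_0(r)$, i.e. iff $\K(r)=\K_2(r)$, i.e. iff $\tau(r)=1$. Thus $\tau(n)=0 \iff \tau(r)=1$, which is exactly $\tau(n)=1-\tau(\tfrac{n}{2})$. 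This case is essentially immediate once the structural formulas are in hand.

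The step I expect to require the most care is the odd case, specifically justifying that the minimizing pairing in~\eqref{Eqh4} is controlled by the value of $\tau$ at the \emph{larger} argument $\Ceil{n/2}$, and doing so without circularity. The natural remedy is induction on $n$: assuming $\K_0,\K_2$ at all arguments below $n$ satisfy the pair-of-exponential-type inequalities of Corollary~\ref{CorKexp}, the growth gap established in~\eqref{EqS4} (namely $\B(m+1)\ge\B(m)+2$ for all but the small exceptional cases) guarantees that the dominant term in each $\K_\ell(\Ceil{n/2})$ strictly exceeds any term at argument $\Floor{n/2}$, so the comparison in~\eqref{Eqh5} is decided by the leading behavior at $\Ceil{n/2}$ alone. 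I would verify the small initial cases $n\le 3$ by hand against Table~\ref{TabK5} to seed the induction, and then the two recurrences follow. Since these are exactly the defining recurrences of the Thue--Morse sequence (shifted by one step), this establishes the claimed identification with \seqnum{A010060}.
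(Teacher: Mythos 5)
Your even case is correct and coincides with the paper's: when $n=2r$, equation \eqref{Eqdagger} gives $\K_0(n)=5^{\B(n)}+\K_2(r)+1$ and $\K_2(n)=5^{\B(n)}+\K_0(r)+1$, whence $\tau(n)=1-\tau(r)$. The genuine gap is in the odd case $n=2i+1$, at exactly the step you flagged. You argue that since every $\K_\ell(i+1)$ strictly exceeds every $\K_{\ell'}(i)$ (by Theorem~\ref{Th_Ki_bounds} and \eqref{EqS4}), the comparison in \eqref{Eqh5} is ``decided by the leading behavior at $\Ceil{\tfrac n2}=i+1$ alone.'' That inference is invalid: each of the two competing sums in \eqref{Eqh4} contains exactly one term at argument $i+1$ and one at argument $i$, so the dominant parts cancel. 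Setting $D_m:=\K_2(m)-\K_0(m)$, the condition in \eqref{Eqh5} reads $D_{i+1}<D_i$, a comparison of \emph{differences}, and the largeness of the values $\K_\ell(i+1)$ says nothing about the size of $|D_{i+1}|$. Indeed, when $n\equiv 3\pmod 4$ (so $i$ is odd and $i+1$ is even) the domination you need, $|D_{i+1}|>|D_i|$, typically \emph{fails}: from Table~\ref{TabK5}, for $n=7$ one has $|D_4|=|634-632|=2<6=|26-32|=|D_3|$, and for $n=11$, $|D_6|=6<18=|D_5|$. In those cases the minimization in \eqref{Eqh4} is decided by the sign of $D_i$, i.e.\ by the \emph{floor} argument, which is the opposite of your claim. (Note also that Lemma~\ref{Lem4}(ii) and Corollary~\ref{CorKexp} only locate the optimal split at $\left\{\Floor{\tfrac n2},\Ceil{\tfrac n2}\right\}$; they are silent about which of the two pairings attains the minimum, so they cannot supply this step either.)

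The repair, which is what the paper's terse sketch is actually doing, is a sign-tracking induction that carries the $\tau$-recurrence itself (not merely the growth inequalities) at smaller arguments. Two facts are needed: (a) for odd $m$, \eqref{Eqdagger} shows $D_m$ is (up to sign) a difference of a $\K$-value at $\Ceil{\tfrac m2}$ and one at $\Floor{\tfrac m2}$, so $|D_m|$ is of the order of $\K\left(\Ceil{\tfrac m2}\right)$ and dominates anything indexed by smaller arguments; (b) for even $m$, $D_m=-D_{m/2}$. If $n\equiv 1\pmod 4$ ($i$ even, $i+1$ odd), fact (a) gives $|D_{i+1}|>|D_i|$ and your conclusion does follow, so your argument is salvageable there. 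But if $n\equiv 3\pmod 4$ ($i$ odd), fact (a) applied to $i$ gives $|D_i|\gg|D_{i+1}|$, so $\mathrm{sign}(D_{i+1}-D_i)=-\mathrm{sign}(D_i)$, and what must be shown is $\mathrm{sign}(D_{i+1})=-\mathrm{sign}(D_i)$; this follows from (b), $D_{i+1}=-D_{(i+1)/2}$, combined with the inductive hypothesis $\tau(i)=\tau\left(\Ceil{\tfrac i2}\right)=\tau\left(\tfrac{i+1}2\right)$ applied at the odd argument $i<n$. Your induction hypothesis carries only the exponential-type inequalities, never the recurrence for $\tau$ at smaller odd arguments, so as written your proof establishes the odd case only for $n\equiv 1\pmod 4$.
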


\begin{proof} (Sketch.)
The basis for the inductive proof are the following observations.

(i) If $n$ is even, then \eqref{Eqdagger} implies that 
$\K_0(n) < \K_2(n)$ if and only if $\K_2\left(\tfrac{n}{2}\right)  < \K_0\left(\tfrac{n}{2}\right)$,
and hence  $\tau(n) = 1 - \tau(\tfrac{n}{2})$.

(ii) Suppose on the other hand that $n=2i+1$ is odd. 
There are two possibilities. If
\beql{Eq31}
\K_0(i)+\K_2(i+1) ~<~ \K_0(i+1)+\K_2(i)
\eeq
(the $\DR$ case in Table~\ref{TabK5}), then $\K_2(i+1)<\K_0(i+1)$,
$\tau(i+1)=2$, $\K(n) = \K_2(n)$, 
and hence $\tau(n)=1=\tau(\Ceil{ \tfrac{n}{2} })$. If the inequality in~\eqref{Eq31}
is reversed, we similarly find that $\tau(n)=2=\tau(\Ceil{ \tfrac{n}{2} })$.

For the induction to work, we need to also show that 
the values of $\B(n)$, $\K_0(n)$, and $\K_2(n)$ are
considerably larger than the values of $\B(n-1)$, $\K_2(n-1)$, 
and $\K_0(n-1)$, respectively, but this follows from
Theorem~\ref{Th_Ki_bounds} and \eqref{EqS4}.
We omit the details of the proof.
\end{proof}

The situation is more complicated in base $4$. 
Here the index set is $\sI = \{0,1,2\}$ (modulo 3), 
$\K(n)$ is the minimum of the three
terms $\K_0(n)$, $\K_1(n)$, $\K_2(n)$, 
and so is specified by a ternary sequence $\tau(n)\in\sI$.
The first $100$ values of $\B(n)$ and $\K(n)$ can be found 
in \seqnum{A230637}\label{A230637}
and \seqnum{A230638}\label{A230638}, 
and the first $100$ terms of $\tau(n)$ are: 
\begin{align}\label{EqTM4}
&0,2,2,1,1,1,2,0,2,0,2,0,0,1,1,1,1,1,1,1,1,1,1,1,1,1,2,0,2,0,2,0,2,0,2, \nonumber \\
&0,2,0,2,0,2,0,2,0,2,0,2,0,2,0,2,0,0,1,1,1,1,1,1,1,1,1,1,1,1,1,1,1,1,1, \nonumber \\
&1,1,1,1,1,1,1,1,1,1,1,1,1,1,1,1,1,1,1,1,1,1,1,1,1,1,1,1,1,1, \ldots
\end{align}
(\seqnum{A239110}\label{A239110}). 
If we write $(2,0)^{13}$ to denote a run of $13$
copies of $2,0$, etc., then the first $1000$ terms
of this sequence are
\beql{EqTM4a}
0, 2^2, 
1^3, (2,0)^3, 0,
1^{13}, (2,0)^{13}, 0, 
1^{53}, (2,0)^{53}, 0,
1^{213}, (0,2)^{213}, 0,
1^{\ge 147}\,,
\eeq
which suggests that after the initial three terms,
there is a repeating pattern
$$
1^{\delta_j}, (2,0)^{\delta_j}, 0\,,
$$
where $\delta_j = \tfrac{10\cdot 4^j-1}{3}$ 
(see \seqnum{A072197}\label{A072197}).
The next theorem shows that this pattern continues for ever.


\begin{thm}
Let $b=4$ and let $d\geq 0$ be an integer. For an integer $n$, 
\begin{enumerate}[(i)]
    \item if $\tfrac{10\cdot 4^{d}+2}{3}\leq n<\tfrac{20\cdot 4^{d}+1}{3}$, then
\[
\begin{split}
K_0(n) &= 2\cB(n) + \cB(\Floor{\tfrac{n}{2}}) + 
O_{\cB}(\Floor{\tfrac{n}{4}})\,,\\
K_1(n) &= \cB(n) + \cB(\Floor{\tfrac{n}{2}}) + 
O_{\cB}(\Floor{\tfrac{n}{4}})\,,\\
K_2(n) &= 3\cB(n) + \cB(\Floor{\tfrac{n}{2}}) + 
O_{\cB}(\Floor{\tfrac{n}{4}})\,,
\end{split}
\]
and thus $\tau(n) = 1$;

\item if $\tfrac{20\cdot 4^{d}+2}{3}\leq n<\tfrac{10\cdot 4^{d+1}-1}{3}$, then
\[
\begin{split}
K_0(n) &= \cB(n) + \cB(\Ceil{\tfrac{n}{2}}) + 
O_{\cB}(\Ceil{\tfrac{n}{4}})\,,\\
K_1(n) &= 2\cB(n) + (2-(n\bmod 2))\cB(\Floor{\tfrac{n}{2}}) + 
O_{\cB}(\Floor{\tfrac{n}{4}})\,,\\
K_2(n) &= \cB(n) + (2-(n\bmod 2))\cB(\Floor{\tfrac{n}{2}}) +
O_{\cB}(\Floor{\tfrac{n}{4}})\,,
\end{split}
\]
and thus $\tau(n) = 2$ when $n$ is odd, and $\tau(n) = 0$ when $n$ is even;

\item if $n = \tfrac{10\cdot 4^{d+1}-1}{3}$, then
\[
\begin{split}
K_0(n) &= \cB(n) + \cB(\Floor{\tfrac{n}{2}}) + 
O_{\cB}(\Ceil{\tfrac{n}{4}})\,,\\
K_1(n) &= 2\cB(n) + \cB(\Ceil{\tfrac{n}{2}}) + 
O_{\cB}(\Floor{\tfrac{n}{4}})\,,\\
K_2(n) &= \cB(n) + \cB(\Ceil{\tfrac{n}{2}}) +
O_{\cB}(\Floor{\tfrac{n}{4}})\,,
\end{split}
\]
and thus $\tau(n) = 0$.
\end{enumerate}
\end{thm}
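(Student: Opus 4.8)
The plan is to prove all three parts simultaneously by strong induction on $n$, with the engine being Theorem~\ref{ThS2} (the recurrence $\K_i(n)=c_{i,n}\cB(n)+\K_{i-2c_{i,n}}(h_{i,n})$) together with the uniqueness of the quasi-positional representation from Theorem~\ref{Thm_RepB}. Since $b=4$ is even we have $\sI=\{0,1,2\}$ (residues modulo $3$), and the three pairs $(\ell,-\ell-2)$ entering \eqref{EqJn} are $(0,1)$, $(1,0)$, and $(2,2)$. The essential observation is that everything attached to $n$ — the set $J(n)$, the multipliers $c_{i,n}$, the split indices $h_{i,n}$, and hence $\K_0(n),\K_1(n),\K_2(n)$ and $\tau(n)$ — is determined through \eqref{EqJn}, \eqref{EqciJ}, \eqref{EqhiJ}, and \eqref{EqTAU} by the values of $\K_\ell$ at $\Floor{n/2}$ and $\Ceil{n/2}$, both of which are smaller than $n$ and so are already known in the three claimed forms by induction.

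Before running the induction I would make explicit the self-similar block structure suggested by \eqref{EqTM4a}: the $d$-th block is regime~(i) (a run on which $\tau=1$), then regime~(ii) (the alternating run giving $\tau=2,0$), closed by the single regime~(iii) value, the whole block having length $10\cdot 4^{d}$. The first task is to locate $\Floor{n/2}$ and $\Ceil{n/2}$ for each $n$: in the interior of regimes~(ii) and~(iii) both halves sit in regime~(i) of the same block, whereas at the left end of regime~(ii) the floor-half drops back to the regime~(iii) endpoint of the preceding block, and throughout regime~(i) both halves lie in regimes~(ii)/(iii) of the preceding block. These membership statements reduce to elementary inequalities on the endpoints $\tfrac{10\cdot 4^{d}+2}{3}$, $\tfrac{20\cdot 4^{d}+1}{3}$, and $\tfrac{10\cdot 4^{d+1}-1}{3}$, and they tell me precisely which of the three inductive formulas to substitute for $\K_\ell(\Floor{n/2})$ and $\K_\ell(\Ceil{n/2})$ in each case.

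With those substitutions in hand, the core of the step is to compute $J(n)$ by comparing the three sums $g(\ell)=\K_\ell(\Ceil{n/2})+\K_{-\ell-2}(\Floor{n/2})$. Each $\K_\ell$ is a quasi-positional expression whose dominant term is $\cB(\Ceil{n/2})$ or $\cB(\Floor{n/2})$, and Lemma~\ref{LemSum2K} guarantees that the remaining terms sum to strictly less than one $\cB$ of the next level; hence every comparison is settled positionally — first by the coefficient of the leading $\cB$, and only under a tie by the lower-order tail, where the growth estimate \eqref{EqS4} and the bounds of Theorem~\ref{Th_Ki_bounds} certify that a single $\cB$ outweighs the entire tail. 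This determines $J(n)$ in each regime; substituting into \eqref{EqciJ} and \eqref{EqhiJ} produces $c_{i,n}$ and $h_{i,n}$, and Theorem~\ref{ThS2} then gives $\K_i(n)=c_{i,n}\cB(n)+\K_{i-2c_{i,n}}(h_{i,n})$. Expanding the tail by the inductive hypothesis yields exactly the stated expressions for $\K_0,\K_1,\K_2$, after which $\tau(n)$ is read off through \eqref{EqTAU} as the index minimizing $\K_i(n)$ — once more a positional comparison of three expressions all carrying the common factor $\cB(n)$.

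The hard part will be the bookkeeping at the regime boundaries, where $\Floor{n/2}$ and $\Ceil{n/2}$ straddle two different regimes: most notably the first value of regime~(ii) in a block, whose floor-half is the regime~(iii) endpoint of the previous block while its ceiling-half begins regime~(i) of the current block. At such transitions the leading coefficient of the tail $\K_{i-2c_{i,n}}(h_{i,n})$ changes, so the choice between $\Floor{n/2}$ and $\Ceil{n/2}$ dictated by \eqref{EqhiJ} must be tracked jointly with the exact top index of the $O_{\cB}(\cdot)$ remainder (whether $\Floor{n/4}$ or $\Ceil{n/4}$). These transition cases are precisely what force the statement into three regimes, and I would arrange the argument so that the output of part~(iii) supplies exactly the regime~(iii) datum needed to launch the next block, thereby closing the induction; the interiors of the regimes are then routine given the positional comparison above.
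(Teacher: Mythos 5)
Your proposal follows essentially the same route as the paper's own proof: the paper likewise proceeds by induction (organized by the block index $d$ rather than strong induction on $n$, which amounts to the same thing), locates $\Floor{\tfrac{n}{2}}$ and $\Ceil{\tfrac{n}{2}}$ in the regimes of the previous level, compares the sums $\K_j\left(\Ceil{\tfrac{n}{2}}\right)+\K_{-j-2}\left(\Floor{\tfrac{n}{2}}\right)$ positionally (via the quasi-positional representation and Lemma~\ref{LemSum2K}) to determine $J(n)$, reads off $c_{i,n}$ and $h_{i,n}$ from \eqref{EqciJ} and \eqref{EqhiJ}, and concludes with Theorem~\ref{ThS2}. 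The one item you should state explicitly is the base case: the first block ($d=0$, i.e., $4\le n\le 13$) must be verified by direct computation, since for those $n$ the halves $\Floor{\tfrac{n}{2}}$, $\Ceil{\tfrac{n}{2}}$ fall outside all three regimes, so the inductive substitution cannot be applied there; the paper handles this by direct verification (see Fig.~\ref{fig:b4}).
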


\begin{proof} We prove the statement by induction on $d$. For $d=0$ (i.e., $4\leq n\leq 13$), the statement can be verified directly (e.g., see Fig.~\ref{fig:b4}).
Let $d>0$.

(i) Let $n$ belong to the interval 
$\tfrac{10\cdot 4^{d}+2}{3}\leq n<\tfrac{20\cdot 4^{d}+1}{3}$. 
Then both $m = \Ceil{\tfrac{n}{2}}$ and $n-m=\Floor{\tfrac{n}{2}}$ 
are in the interval (ii). 
If $n$ is even, the values of $K_j(m) + K_{-2-j}(m)$ are
\[
\begin{split}
K_0(m) + K_1(m) &= 3\cB(m) + O_{\cB}(\Ceil{\tfrac{m}{2}})\,,\\
K_2(m) + K_2(m) &= 2\cB(m) + O_{\cB}(\Ceil{\tfrac{m}{2}})\,,
\end{split}
\]
implying that $J(n) = \{2\}$. 
If $n$ is odd, the values of $K_j(m) + K_{-2-j}(m-1)$ are
\[
\begin{split}
K_0(m) + K_1(m-1) &= \cB(m) + 2\cB(m-1) + O_{\cB}(\Ceil{\tfrac{m}{2}})\,,\\
K_1(m) + K_0(m-1) &= 2\cB(m) + \cB(m-1) + O_{\cB}(\Ceil{\tfrac{m}{2}})\,,\\
K_2(m) + K_2(m-1) &= \cB(m) + \cB(m-1) + O_{\cB}(\Ceil{\tfrac{m}{2}})\,,
\end{split}
\]
also implying that $J(n)=\{2\}$.
From \eqref{EqciJ} and \eqref{EqhiJ}, it follows that $c_{0,n}=2$, $c_{1,n}=1$, $c_{2,n}=3$ and $h_{0,n}=h_{1,n}=h_{2,n}=\Floor{\tfrac{n}{2}}$. Statement (i) now follows by induction from formula~\eqref{EqS2}.

Statements (ii) and (iii) are proved similarly. We omit the details.
\end{proof}

Theorem~\ref{Thm_b1b2} implies that essentially the same sequence $\left(\tau(n)\right)_{n\geq 1}$  arises in base $7$,
with the only difference being that $1$s and $2$s are interchanged.



\section{\texorpdfstring{$\K(n)$}{K(n)} for base \texorpdfstring{$10$}{10}}\label{SecB10}

In base $b=10$, the case studied by Kaprekar and others,
the index set is $\sI = \{0,1,2,\ldots,8\}$ (modulo 9), 
and, from 
\eqref{EqKp1}, there are five distinct $\K'_i(n)$, namely
$\K'_0(n)=\K'_7(n)$, $\K'_1(n)=\K'_6(n)$, $\K'_2(n)=\K'_5(n)$, 
$\K'_3(n)=\K'_4(n)$, and $\K'_8(n)$.
There are nine variables $c_{i,n}$, $h_{i,n}$, and $\K_i(n)$, 
with $0 \le i \le 8$.
The values of $\K_i(n)$ for $n \le 7$ are shown in Table~\ref{TabK10a}.
Then
\beql{Eq10e}
\K(n) ~=~ \min_{0 \le i \le 8} \K_i(n) ~=~ 
10^{\B(n)} + \text{~terms~of~smaller~order}\,.
\eeq
We have already seen $\B(n)$ and $\K(n)$ for $n \le 7$ in Table~\ref{TabK1}.
Tables~\ref{TabK10b} and \ref{TabK10c} extend these values to $n=16$,
going far enough that we can see -- and confirm! --
the values for $\K(4), \ldots, \K(8)$, and $\K(16)$ found
by Kaprekar and Narasinga Rao more than
fifty years ago (see the discussion in the Introduction).
The first $100$ terms of these two sequences can be seen in entries
\seqnum{A230857}\label{A230857} and \seqnum{A006064}\label{A006064c} in \cite{OEIS}.

\begin{table}[htb]
\footnotesize
$$
\begin{array}{|c|ccccc|}
\hline
      &  i=0 & i=1 & i=2 & i=3 & i=4 \\
\hline
\K_i(1) & \mathbf{0} & 10 & 2  & 12 & 4 \\
\K_i(2) & 117 & 109 & \mathbf{101} & 111 & 103 \\
\K_i(3) 
& 10^{13} + 116
& 10^{13} + 9
& \mathbf{10^{13} + 1}
& 10^{13} + 11
& 10^{13} + 3 \\
\K_i(4)  
& 2 \cdot 10^{24} + 115 
& 10^{24} + 108 
& 3 \cdot 10^{24} + 116
& 10^{24} + 110 
& \mathbf{10^{24} + 102}  \\
\K_i(5) 
& 2 \cdot 10^{\B(5)}+115
& 10^{\B(5)}+108
& 3 \cdot 10^{\B(5)}+116
& 10^{\B(5)}+108
& \mathbf{10^{\B(5)}+102} \\
\hline
\hline
       &  i=5 & i=6 & i=7 & i=8 &  \\
\hline
\K_i(1) & 14 & 6 & 16 & 8 & \\
\K_i(2) & 113 & 105 & 115 & 107 &  \\
\K_i(3)
& 10^{13} + 13
& 10^{13} + 5
& 10^{13} + 15
& 10^{13} + 7
& \\
\K_i(4)  
& 10^{24} + 112 
& 10^{24} + 104 
& 10^{24} + 114 
& 10^{24} + 106
&  \\ 
\K_i(5) 
& 10^{\B(5)}+112
& 10^{\B(5)}+104
& 10^{\B(5)}+114
& 10^{\B(5)}+106 
& \\
\hline
\end{array}
$$
\normalsize
\caption{Base $10$: $\K_i(n)$ for $n \le 5$; 
the value of $\K(n)$ (\seqnum{A006064}\label{A006064d}) is shown in bold font.
In the $\K_i(5)$ rows, $\B(5) = (10^{13}+116)/9 = 1111111111124$ 
as in Table~\ref{TabK1}. See also Fig.~\ref{fig:b10} in~\ref{AppFlow}.} 
\label{TabK10a}
\end{table}

\begin{table}[htb]
$$
 \begin{array}{|c|c|} 
\hline
n       &  \B(n) \\
\hline
8 & (2 \cdot 10^{24}+214)/9 \\
9 & (10^{(10^{13}+116)/9}+10^{24}+214)/9 \\
10 & (2 \cdot 10^{(10^{13}+116)/9}+214)/9 \\
11 & (10^{(2 \cdot 10^{13}+16)/9}+10^{(10^{13}+116)/9}+10^{13}+114)/9 \\
12 & (2 \cdot 10^{(2 \cdot 10^{13}+16)/9}+2 \cdot 10^{13}+14)/9 \\
13 & (10^{\B(7)} +10^{(2 \cdot 10^{13}+16)/9}+2 \cdot 10^{13}+14)/9 \\
14 & (2 \cdot 10^{\B(7)} +2 \cdot 10^{13}+14)/9 \\
15 & (10^{\B(8)} +10^{\B(7)} + \B(7) - 2)/9 \\
16 & (2 \cdot 10^{\B(8)} + \B(8) -2)/9 \\
\hline
\end{array}
$$
\caption{Base $10$: $\B(n)$ for $8 \le n \le 16$,
extending~Table~\ref{TabK1};
$\B(7)=  (10^{24}+10^{13}+115)/9$. See also Fig.~\ref{fig:b10} in~\ref{AppFlow}. }
\label{TabK10b}
\end{table}

 \begin{table}[htb]
$$
\begin{array}{|c|c|}
\hline
n       &  \K(n) \\
\hline
8 & 10^ {\B(8)} +10^{24}+103 \\
9 & 10^ {\B(9)} +10^{24}+103 \\
10 & 10^ {\B(10)} +10^{(10^{13}+116)/9}+103 \\
11 & 10^ {\B(11)} +10^{(10^{13}+116)/9}+103 \\
12 & 10^ {\B(12)} +10^{(2 \cdot 10^{13}+16)/9}+10^{13}+3 \\
13 & 10^ {\B(13)} +10^{(2 \cdot 10^{13}+16)/9}+10^{13}+3 \\
14 & 10^ {\B(14)} +10^{(10^{24}+10^{13}+115)/9}+10^{13}+3 \\
15 & 10^ {\B(15)} +10^{(10^{24}+10^{13}+115)/9}+10^{13}+3 \\
16 & 10^ {\B(16)} +10^{(2 \cdot 10^{24}+214)/9}+10^{24}+104 \\
\hline
\end{array}
$$
\caption{Base $10$: $\K(n)$ for $8 \le n \le 16$,
extending Table~\ref{TabK1}. See also Fig.~\ref{fig:b10} in~\ref{AppFlow}.}
\label{TabK10c}
\end{table}

The first $100$ terms of the base $10$ generalized Thue--Morse sequence 
$\tau(n)$ are as follows:
 \begin{align}\label{EqTM10}
& 0,2,2,4,4,4,4,6,6,6,6,6,6,6,6,8,8,8,8,8,8,8,8,8,8,8,8,8,8,8,8,1,1,1, \nonumber \\
& 1,1,1,1,1,1,1,1,1,1,1,1,1,1,1,1,1,1,1,1,1,1,1,1,1,1,1,1,8,3,8,3,8,3, \nonumber \\
& 8,3,8,3,8,3,8,3,8,3,8,3,8,3,8,3,8,3,8,3,8,3,8,3,8,3,8,3,8,3,8,3, \dots \nonumber \\
\end{align}
(\seqnum{A239896}\label{A239896}). 
This can be rewritten as
$0, 2^2, 4^4, 6^8, 8^{16}, 1^{31}, (8,3)^{\ge 19}$,
but now, unlike the base $4$ case, there is no obvious pattern.



\section{Growth of \texorpdfstring{$\K(n)$}{K(n)}}\label{SecTow}
In this section we discuss the rate of growth of $\K(n)$
for a fixed $b$. 

The following theorem generalizes the inequalities \eqref{EqI2}
and \eqref{EqI3}. It implies that,
for any base $b$, $\{\K(n), n\ge 1\}$ is a sequence
of exponential type (cf. Lemma~\ref{Lem4}).


\begin{thm}\label{ThDouble}
For $b \ge 2$ and $n \ge 1$,
\beql{EqDouble1}
\K(n+1) ~>~ b\K(n)\,,
\eeq
except that for odd $b \ge 5$, we only have
\beql{EqDouble2}
\K(3) ~>~ (b-1)\K(2)\,.
\eeq
\end{thm}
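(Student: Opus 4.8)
The plan is to prove $\K(n+1) > b\K(n)$ by combining the two main structural results already established: the tower-like lower bound on the growth of $\B(n)$ from Theorem~\ref{ThBB}, and the tight bounds on $\K(n)$ in terms of $b^{\B(n)}$ from Theorem~\ref{Th_Ki_bounds}. The key observation is that the inequality $\B(n+1) \ge \B(n) + 2$ (valid for most $n$) gives an enormous gap between $b^{\B(n+1)}$ and $b^{\B(n)}$, far more than the single factor of $b$ we need. So the generic case should follow by a short chain of inequalities, and the real work is isolating and handling the exceptional small-$n$ cases where the gap in $\B$ is only $+1$.

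First I would reduce to comparing $\K(n+1)$ and $b\K(n)$ through their leading exponents. By Theorem~\ref{Th_Ki_bounds}, for even $b$ we have $b^{\B(n)} < \K(n) < b^{\B(n)+1}$, and for odd $b$ we have $b^{\B(n)} < \K(n) < \tfrac{1}{2}b^{\B(n)+1}$. Then
\begin{equation}
\K(n+1) ~>~ b^{\B(n+1)} \qquad \text{and} \qquad b\K(n) ~<~ \beta\, b^{\B(n)+2}\,,
\end{equation}
where $\beta = 1$ for even $b$ and $\beta = \tfrac{1}{2}$ for odd $b$. So it suffices to show $b^{\B(n+1)} \ge \beta\, b^{\B(n)+2}$, which holds as soon as $\B(n+1) \ge \B(n)+2$ (the $\beta \le 1$ only helps). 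By the ``otherwise'' branch of \eqref{EqS4} in Theorem~\ref{ThBB}, this covers every case except the two exceptional pairs: $b$ odd with $n \in \{2,3\}$ (recall \eqref{EqS4} starts at $n=3$, so the relevant jumps are $\B(3)$ from $\B(2)$ and $\B(4)$ from $\B(3)$). This disposes of the entire generic range $n \ge 4$ for all $b$, and $n \ge 2$ for even $b$, in one stroke.

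The remaining cases are $n=1$ for all $b$, and the small exceptional indices for odd $b \ge 5$. For $n=1$, since $\K(1)=0$, the inequality $\K(2) > 0$ is immediate. For odd $b \ge 5$ I would simply read off the explicit values from Theorems~\ref{ThK2} and~\ref{ThK3}: $\K(2) = b+1$ and $\K(3) = b^2+1$. Checking $\K(2) > b\K(1) = 0$ is trivial, and the genuinely exceptional comparison is $\K(3)$ versus $\K(2)$. Here $b\K(2) = b(b+1) = b^2 + b > b^2 + 1 = \K(3)$ for $b \ge 2$, so \eqref{EqDouble1} genuinely fails at $n=2$ for odd $b$; this is exactly why the weaker statement \eqref{EqDouble2} is asserted instead. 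For \eqref{EqDouble2} I would verify $(b-1)\K(2) = (b-1)(b+1) = b^2 - 1 < b^2 + 1 = \K(3)$, which holds for every odd $b \ge 5$. Finally I must confirm \eqref{EqDouble1} does hold at $n=3$ for odd $b \ge 5$, i.e. $\K(4) > b\K(3)$; using $\B(4)=3$ from \eqref{EqE4} and $\B(3)=2$ from Theorem~\ref{ThK3}, the gap $\B(4) = \B(3)+1$ is only one, so the crude bound above is insufficient and I would instead use the exact value $\K(4) = b^3 + b + 2$ from Table~\ref{TabA1} against $b\K(3) = b(b^2+1) = b^3 + b$, giving $\K(4) - b\K(3) = 2 > 0$.

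The main obstacle is precisely the odd-base exceptional cases where the $\B$-gap is only $+1$, since there the leading-term estimate from Theorem~\ref{Th_Ki_bounds} does not have enough room and must be replaced by the exact closed-form values of $\K(2), \K(3), \K(4)$. These are all available explicitly (Theorems~\ref{ThK2}, \ref{ThK3}, and \eqref{EqE4} together with Table~\ref{TabA1}), so each reduces to a one-line polynomial comparison; the only subtlety is being careful to state \eqref{EqDouble2} rather than \eqref{EqDouble1} at the single genuinely failing index $n=2$. Everything else is the routine chain of exponential inequalities driven by \eqref{EqS4}.
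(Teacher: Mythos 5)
Your overall strategy is sound and in fact coincides with the paper's own proof: the generic case is dispatched by combining the lower bound $\K(n+1) > b^{\B(n+1)}$, the jump $\B(n+1)\ge\B(n)+2$ from \eqref{EqS4}, and the upper bound $\K(n) < \beta\, b^{\B(n)+1}$ of Theorem~\ref{Th_Ki_bounds}, while the small exceptional indices are settled by explicit values. However, your case analysis has two genuine holes, both in the odd-base exceptional range. First, the exceptions to \eqref{EqS4} are ``$b$ odd and $n+1\in\{3,4\}$'', which includes $b=3$; you identify this correctly at the outset, but your treatment of the remaining cases then silently narrows to ``odd $b\ge 5$'', so the pairs $(b,n)=(3,2)$ and $(3,3)$ are never verified. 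They cannot be handled by the formulas you quote, since for $b=3$ Theorem~\ref{ThK3} gives $\K(3)=28=3^3+1$, not $b^2+1$; one needs the values from Table~\ref{TabK1} ($\K(2)=4$, $\K(3)=28$, $\K(4)=3^5+5=248$), giving $28>12$ and $248>84$. Second, for $b=5$ at $n=3$ you invoke $\B(4)=3$ and $\K(4)=b^3+b+2$; but by \eqref{EqE4} and Table~\ref{TabK1}, for $b=5$ one has $\B(4)=4$ and $\K(4)=5^4+7=632$ --- the formulas you cite hold only for odd $b\ge 7$. The desired inequality $632>130=5\K(3)$ is still true, but your justification does not establish it.

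Both gaps are easily repaired, and the repair is exactly what the paper does: it checks $n=3$ for $b\in\{3,5\}$ directly from Table~\ref{TabK1}, and handles $n=3$ for odd $b\ge 7$ via Corollary~\ref{Cor20}, which yields $\K(4)\ge b^{\B(4)}+1+\K(2)=b^3+b+2$ (essentially your computation, but with the hypothesis range stated correctly). So the mathematics you propose is the right mathematics; what is missing is the care to apply each closed-form value only in the range of $b$ for which the cited theorem actually asserts it, and to remember that ``$b$ odd'' in \eqref{EqS4} includes $b=3$ even though the exception in the theorem's statement does not.
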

\begin{proof}
For $n=1$ and $2$, the statement can be verified 
directly from Theorems~\ref{ThK2} and~\ref{ThK3}.

For $n=3$ and $b=3$ or $5$, the statement follows from Table ~\ref{TabK1}.
For $n=3$ and odd $b \ge 7$, 
Corollary~\ref{Cor20}, Theorem~\ref{ThK2}, and~\eqref{EqE4} imply that
$\K(4) \ge b^{\B(4)} + 1 + \K(2) = b^3 + b + 2.$
Since $\K(3)=b^2+1$ (by Theorem~\ref{ThK3}), we have $\K(4) \ge b \K(3)$.

For $n \ge 4$ and any $b$, as well as for $n=3$ and even $b$, 
the inequality~\eqref{EqS4} and Theorem~\ref{Th_Ki_bounds} imply
$\K(n+1) \ge b^{\B(n+1)} \ge b^{\B(n)+2} \ge b \K(n)$.
\end{proof}

Since $\K(n)$ grows rapidly, it is appropriate to
describe its value by a tower of exponentials.
For $b \ge 2$,
any number $u \ge 1$ can be written in a unique way as a ``tower"
\beql{EqTowU}
u ~=~ b^{  b^{  \Snj \cdot ^{ \Snj \cdot ^{ \Snj \cdot  ^{\Snj b ^{\Snj \Om } }  } }   }  } \,,
\eeq
with $0 < \Om \le 1$. If this tower contains $h-1$ $b$'s and one $\Om$,
we call $h$ the base $b$ {\em height} of $u$, denoted by $\Ht(u)$.
Then $\Ht(u)$ is one more than the number of times
one has to take logarithms to the base $b$ of $u$ until reaching a number
$\Om$ $\le 1$.

Examination of the data in Tables~\ref{TabK1}, \ref{TabK2}, \ref{TabK2a}, \ref{TabK5},
\ref{TabK10c} (and in the more extended tables in \cite{OEIS})
 suggests the following conjecture.
\begin{conj}\label{ConjTow} It appears that:

(i) If $b=2$ and $n \ge 2$, then $\Ht(n)=\Ceil{\log_2(n)}+3$;

(i) If $b=3$ and $n \ge 3$, then $\Ht(n)=\Ceil{\log_2(n/5)}+4$;

(iii) If $b \ge 4$ is even and $n \ge 2$, then $\Ht(n)=\Ceil{\log_2(n)}+2$;

(iii) If $b \ge 5$ is odd and $n \ge 2$, then $\Ht(n)=\Ceil{\log_2(n)}+1$.
\end{conj}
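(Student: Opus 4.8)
The plan is to reduce the conjecture to a single height recurrence and then solve that recurrence from explicit base cases. Recall that $\Ht(u)=1$ for $0<u\le 1$ and $\Ht(u)=1+\Ht(\log_b u)$ for $u>1$, and that each conjectured closed form satisfies the recurrence $\Ht(n)=\Ht(\Ceil{n/2})+1$ for large $n$, since $\Ceil{\log_2 n}=\Ceil{\log_2\Ceil{n/2}}+1$ for $n\ge 2$ (and likewise for the shifted form $\Ceil{\log_2(n/5)}+4$ used when $b=3$). Consequently each formula $\Ht(\K(n))=\Ceil{\log_2 n}+\la$ is equivalent to the pair consisting of (a) the height recurrence
\beql{EqHtRec}
\Ht(\K(n)) ~=~ \Ht\!\left(\K\!\left(\Ceil{\tfrac{n}{2}}\right)\right) + 1
\eeq
for all sufficiently large $n$, together with (b) a finite set of base cases fixing $\la$. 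First I would verify the base cases directly from Tables~\ref{TabK1}, \ref{TabK2}, \ref{TabK5}, and \ref{TabK10c}; this also pins down the exact range of validity. The early exceptions for odd $b$ at $n\in\{3,4\}$, and the shift ``$/5$'' for $b=3$, occur precisely where the short-tower behaviour recorded in~\eqref{EqS4} (the ``$+1$'' rather than ``$+2$'' gap) prevents \eqref{EqHtRec} from holding yet. For $b\in\{2,3\}$ I would run the argument directly off the explicit recurrences of Theorems~\ref{ThBase2} and~\ref{ThBase3}, which makes the analysis self-contained.

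For the inductive step I would unfold two logarithms. Since $\K(n)=b^{\B(n)}+1+k$ with $0\le k\le (b-1)\B(n)-2$ and $\B(n)\ge 1$ (Theorem~\ref{Th131_1}), we have $\log_b\K(n)\in(\B(n),\B(n)+1)$, hence $\Ht(\K(n))=2+\Ht(\log_b^2\K(n))$; likewise $\Ht(\K(\Ceil{n/2}))=1+\Ht(\log_b\K(\Ceil{n/2}))$. Thus \eqref{EqHtRec} is equivalent to the clean statement
\beql{EqCrux}
\Ht\!\left(\log_b^2 \K(n)\right) ~=~ \Ht\!\left(\log_b \K\!\left(\Ceil{\tfrac{n}{2}}\right)\right)\,.
\eeq
Writing $M:=\Ceil{n/2}$, the right-hand argument lies in $(\B(M),\B(M)+1)$. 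For the left-hand argument I would use \eqref{hatB} and Theorem~\ref{ThS2} to write $\B(n)=\big(\K_{i^\ast}(M)+\K_{j^\ast}(\Floor{n/2})+2\big)/(b-1)$ for the minimizing indices, and then Theorem~\ref{Th_Ki_bounds} together with $\K_{i^\ast}(M)=c\,(b^{\B(M)}+1)+\cdots$, $1\le c\le b-1$, to conclude that $\B(n)=\gamma\,b^{\B(M)}(1+o(1))$ for a rational $\gamma$ bounded above and below by absolute constants. Taking logarithms twice then yields $\log_b^2\K(n)=\B(M)+\vartheta_n$ with $\vartheta_n\in(-1,1)$, so both sides of \eqref{EqCrux} have the same leading term $\B(M)$.

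This is exactly where the genuine difficulty lies. Both arguments in \eqref{EqCrux} sit within an additive $O(1)$ of $\B(M)$, but $\Ht$ is a step function that jumps at the sparse tower thresholds $\theta_h\in\{b,\,b^b,\,b^{b^b},\dots\}$. Equation \eqref{EqCrux} can therefore fail only if $\B(M)$ happens to lie within distance $O(1)$ of some $\theta_h$, that is, only if the tower-top $\Om(M)$ in the representation \eqref{EqTow10} of $\K(M)$ approaches $0$ or $1$. The main obstacle is thus to establish a uniform separation of $\Om(n)$ from the endpoints of $(0,1]$, which is exactly the control left unproven in \eqref{EqTow10}. I would attempt this by strengthening the induction hypothesis to carry, alongside $\Ht(\K(n))$, explicit two-sided bounds on the fractional quantity $\log_b^2\K(n)-\B(M)$ (equivalently on $\Om(n)$), propagating them through the estimate $\B(n)=\gamma\,b^{\B(M)}(1+o(1))$. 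Showing that these bounds remain inside a fixed compact subinterval of $(0,1)$ for all large $n$ is the crux of the matter, and is the reason the statement is presented here only as a conjecture.
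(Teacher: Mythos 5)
The statement you are addressing is presented in the paper only as a conjecture: the paper's own justification is exactly the heuristic you reproduce (namely that $\K(n)$ is very roughly $b^{(\K(\Ceil{n/2})+\K(\Floor{n/2}))/(b-1)}$, so the tower height should grow by one each time $n$ doubles), followed by an explicit admission that this cannot currently be made rigorous. Your write-up takes essentially this same route and isolates the same two obstructions the paper names --- chiefly that the tower-top $\Om$ could drift toward the endpoints of $(0,1]$, in which case exponentiation raises the height by $2$ instead of $1$ --- so your conclusion that the result remains a conjecture, pending uniform control of $\Om(n)$, coincides with the paper's own position.
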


For example, in base $b=10$, 
\beql{EqTow10_3}
\K(3) ~=~ 10^{13}+1 ~=~  10^{  10^{  \Snj 10 ^{ 0.04686\ldots } }  } \,, 
\eeq
which has height $4$. The heights of $\K(2)$ through $\K(16)$ in base $10$ 
(see \eqref{EqKb10} and Tables~\ref{TabK1}, \ref{TabK10c})
are $3,4,4,5,5,5,5,6,6,6,6,6,6,6,6$, respectively, in agreement
with the conjecture.

There are two reasons for believing the conjecture.
First, it is true in every case that we have checked.
Second, from Section~\ref{SecBb},
$\K(n)$ is very roughly equal to
\beql{EqConj2}
b^{ (\K(\Ceil{ \tfrac{n}{2} }) + \K(\Floor{ \tfrac{n}{2} }) )/(b-1)}\,,
\eeq
which suggests that the height of the tower for $\K(n)$ 
is one greater than the height of the tower for
$\K(\Ceil{ \tfrac{n}{2} })$, which would leads to the formulas in the conjecture.
However, two difficulties arise when
trying to make this argument rigorous.
One is the fact that if $u$ in \eqref{EqTowU} has height $h$, and $\Om(u)$ is very
close to $1$, $b^u$ can have height $h+2$ instead of $h+1$.
This seems not to happen with $\K(n)$, but we cannot rule out
that possibility, even for base $2$.
The second difficulty is that~\eqref{EqConj2} ignores
the choices that must be made (for $b \ge 4$)
among the $\K_i(n)$
when determining $\K(n)$. 

The following example shows the first of these difficulties
in a simpler setting.
Consider the sequence defined by the recurrence
\beql{EqSimpler}
a(1)=0, ~ a(n) = 
2^{a(\Ceil{ \tfrac{n}{2} })+a(\Floor{ \tfrac{n}{2} })}\quad \text{for~} n\ge 2\,.
\eeq
This is similar to the recurrence for $\K(n)$ in base $2$ given in 
\eqref{EqKK2} and \eqref{EqB2}, except that the additive terms
on the right-hand sides of those equations are missing.
The initial values of $a(n)$ for $n=1,2,3,\ldots$  are
$$
0,1,2,4,8,16,64,2^8,2^{12},2^{16},2^{24},
2^{32}, 2^{80}, 2^{128}, 2^{320}, 2^{512}, 2^{4352}, \ldots \,,
$$
(\seqnum{A230863}\label{A230863}). The heights of $a(n)$ for $n=2,...,10$ are 
$1,1,2,3,4,4,5,5,5,5$. 
For $11 \le n \le 40$, 
if $9\cdot 2^{i-1} < n \le 9\cdot 2^i$ then $\Ht(a(n)) = i+5$,
although here we do not know if this will
hold for all $n$.\footnote{The difficulty lies in the fact that terms $a(n)$, for which the top entry
in the tower, $\Om$, is 1 or very close to 1, could disrupt the pattern of the heights.}

For small values of $n$, of course, there is no difficulty
in computing the height of $\K(n)$.
From Theorems~\ref{ThK2} and \ref{ThK3}, for example,
we have $\Ht(\K(2)) = 4$ if $b=2$, or $3$ if $b \ge 3$, and
\beql{EqK3bis}
\Ht(\K(3)) ~=~
\begin{cases}
5  &\text{if $b=2$}, \\

4,   &\text{if $b=3$}, \\
4,  &\text{if $b \ge 4$ is even}, \\
3,   &\text{if $b \ge 5$ is odd}. \\
\end{cases}
\eeq


\section*{Acknowledgments}
The authors are grateful to the late Donovan Johnson and Reinhard Zumkeller for 
carrying out computations during the early stages of this investigation.






\appendix
\renewcommand{\thesection}{\appendixname~\Alph{section}}


\section{Practical Computations}\label{AppPARI}

The most difficult task in programming the algorithm in Section \ref{SecBb} 
is in handling the very large numbers that appear.
As  we saw in Section \ref{SecTow}, to compute
$\K(100)$ in base $10$,  for example, we have to work
with numbers that are of the order of
a tower of $10$'s of height $9$.
This problem was solved using C++ by defining a special 
type of object (we called it the ``sparse radix representation''),
which represents an integer in the algebraic form:
\beql{EqAlgForm}
\frac{1}{\gamma}\left( \al_1 b^{d_1} + \al_2 b^{d_2} +  
\cdots + \al_k b^{d_k}\right),
\eeq
where $\gamma$ and the $\al_i$ are integers,
with $\gamma \geq 1$ (typically we have $\gamma=1$ or $\gamma=b-1$) and $1 \leq \al_i < b$, 
and $d_i$ are objects of the same type, satisfying
$d_1 > d_2 >  \cdots  > d_k$.
These objects support the operations of comparison and addition, 
as well as multiplication by positive rational numbers (so these 
objects form a semi-vector space \cite{JMV2007} 
over the semi-field of positive rationals $\mathbb{Q}^+$).

Comparison of two objects with $\gamma=1$ is done recursively,
starting by comparing the highest-order terms, and if these are tied,
comparing the next-to-highest terms, and so on.
Similarly, addition of two objects with $\gamma=1$ is done
by first combining powers of $b$ with equal exponents,
and then reducing the coefficients into the required range
(e.g., $\beta b^d$ with $\beta \geq b$ is replaced with
$(\beta \bmod b)\, b^{d + \Floor{ \beta/b }}$,
where the addition in the exponents is performed recursively).
If the denominator $\gamma$ of either of the two numbers is not $1$,
the objects (i.e., the coefficients $\al_i$) are
first multiplied by $L$, the least common multiple of their $\gamma$'s,
the coefficients are then reduced into the required range,
and the resulting objects are compared or added as above
(in the case of addition we set $\gamma$ for the sum equal to $L$).
We remark that although the representation of an integer in the
form~\eqref{EqAlgForm} is not unique (e.g., $\frac{1}{b-1}\left(b^1 + (b-2)b^0\right)$ and $2b^0$ represent the same integer $2$), any two such representations
can be efficiently compared and tested for equality.

We also made extensive use of the following
PARI/GP program for computing $\Gen(u)$ and $F(u)$.
The procedure \texttt{Gen(u,b)}
uses the recurrence in \eqref{EqGen}
to compute $\Gen(u)$ in base $b$ for $u \in \NN$. For example,
\texttt{Gen(10\^{}13+1,10)} would return the 
three generators $9999999999892$, $9999999999901$, 
$10000000000000$ of $10^{13}+1$ in base $10$. 
Correspondingly, the number of generators may be obtained as \texttt{\#Gen(u,b)}.

\begin{verbatim}
/* The PARI/GP procedure Gen(u,b) */

{ Gen(u,b=10) = my(d,m,k);
  if(u<0 || u==1, return([]); );
  if(u==0, return([0]); );

  d = #digits(u,b)-1;
  m = u\b^d;
  while( sumdigits(m,b) > u - m*b^d,
    m--;
    if(m==0, m=b-1; d--; );
  );
  k = u - m*b^d - sumdigits(m,b);

  vecsort( concat( apply(x->x+m*b^d,Gen(k,b)), 
                   apply(x->m*b^d-1-x,Gen((b-1)*d-k-2,b)) ) );
}
\end{verbatim}


\section{Flow-charts of computations for 
bases \texorpdfstring{$2$}{2} through \texorpdfstring{$10$}{10}}\label{AppFlow}

To illustrate how the computation of the $\K_i(n)$ and $\B(n)$ proceeds,
and to show the complexity of their recursive structure,
in Figures \ref{fig:b2}--\ref{fig:b10} we provide flow-charts of the 
computation for bases $b=2,3,\dots,10$. 

For each $n\leq 16$, there is a stack of boxed nodes containing 
the values of $\text{E} := \B(n)$ and $\text{K}i := \K_i(n)$ for 
$i\in\sI$. The shaded node denotes the value 
of $\K(n)$ (having the minimum value among all K-nodes in the stack).

Each arc between K-nodes corresponds to an instance 
of the formula~\eqref{EqS2} and 
connects the K-nodes corresponding to $\K_{i-2c_{i,n}}(h_{i,n})$ 
and $\K_i(n)$. 
So, for $n>1$, each $\K_i(n)$ node has exactly one incoming arc.
The value of $\K_i(n)$ is given in the form 
$c_{i,n}(b^{\text{E}}+1)+\ldots$, where ``$\ldots$'' 
stands for the corresponding value of $\K_{i-2c_{i,n}}(h_{i,n})$ 
(located at the starting node of the incoming arc).

Each E-node for $n>1$ has two incoming arcs, shown dashed 
(which may coincide and be shown in bold when $n$ is even),
illustrating the formula
$$
\B(n) = 
\frac{\K_j\left(\Ceil{\tfrac{n}{2}}\right) + 
\K_{-j-2}\left(\Floor{\tfrac{n}{2}}\right)+2}{b-1}\,,
$$
which holds for some $j\in\sI$ as follows from~\eqref{EqFP1}. 
The incoming arcs start at the K-nodes corresponding to 
$\K_j\left(\Ceil{\tfrac{n}{2}}\right)$ and 
$\K_{-j-2}\left(\Floor{\tfrac{n}{2}}\right)$.
Since there may be several equally good choices for $j$, 
we assume that $j$ is such that $\K_j\left(\Ceil{\tfrac{n}{2}}\right) = \K\left(\Ceil{\tfrac{n}{2}}\right)$
whenever equality holds in~\eqref{EqRed}; 
otherwise $j$ is taken to be the smallest of the possible values.
So if equality holds in~\eqref{EqRed}, 
there exists an incoming arc from the shaded K-node with the 
value $\K\left(\Ceil{\tfrac{n}{2}}\right)$.

It can be seen that for $b=4$ (Fig.~\ref{fig:b4}) and 
$b=7$ (Fig.~\ref{fig:b7}) there are no incoming arcs
to the E-node from shaded K-nodes when $n\in\{13,15,16\}$. 
Similarly, there no such arcs for $b=9$ (Fig.~\ref{fig:b9}) 
and $n\in\{8,9,10,11,12,13,14,16\}$.
That is, for these $b$ and $n$, the formula~\eqref{EqRed} does not hold.


 \begin{figure}
  \centering
  \includegraphics[width=.85\textwidth]{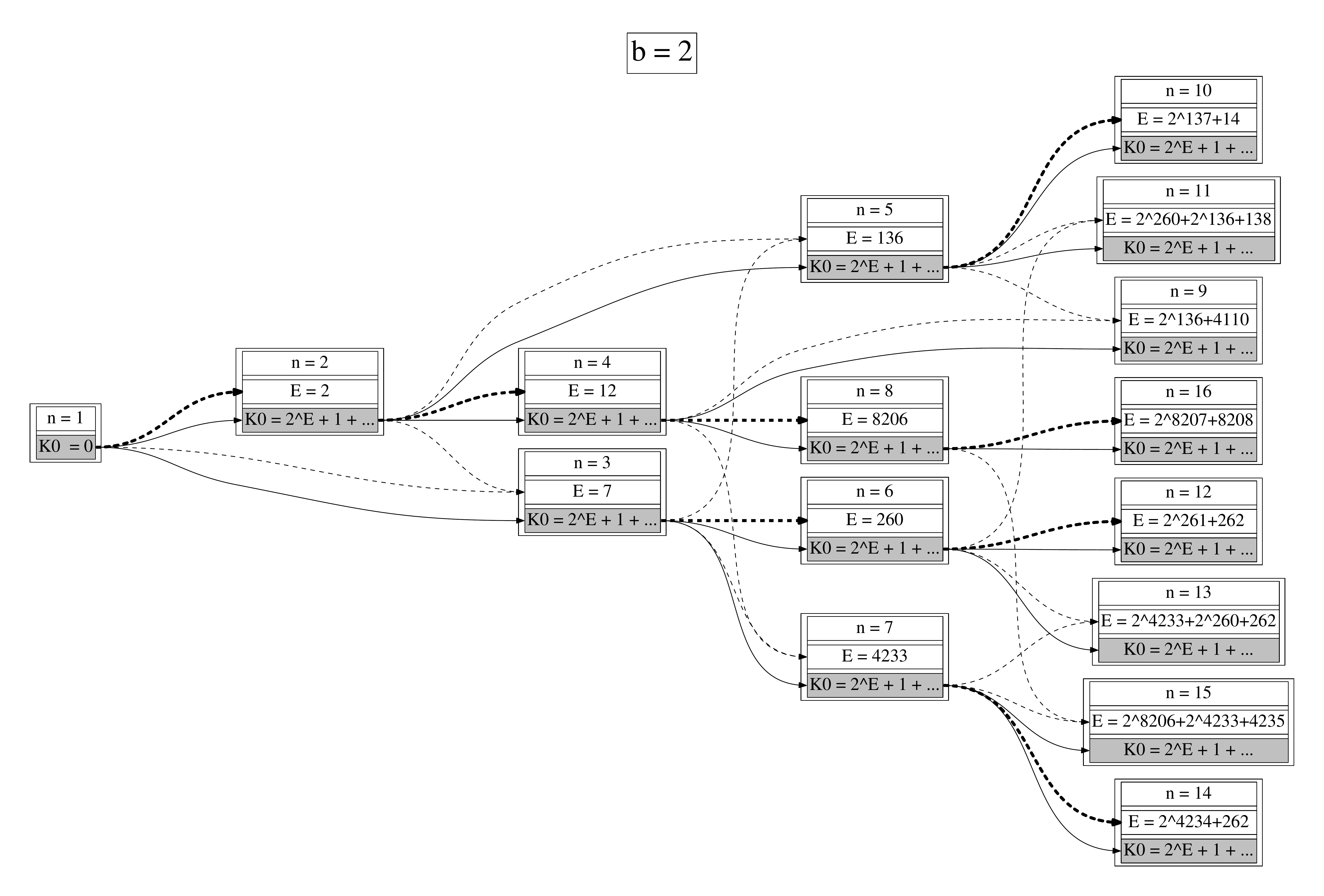}
  \caption{Flow-chart illustrating the calculation of $\B(n)$, $\K_i(n)$, 
  and $\K(n)$ for $n\leq 16$ in base $b=2$. For a description of these flow-charts
  see~\ref{AppFlow}.}
  \label{fig:b2}
 \end{figure}

 \begin{figure}
  \centering
  \includegraphics[width=.85\textwidth]{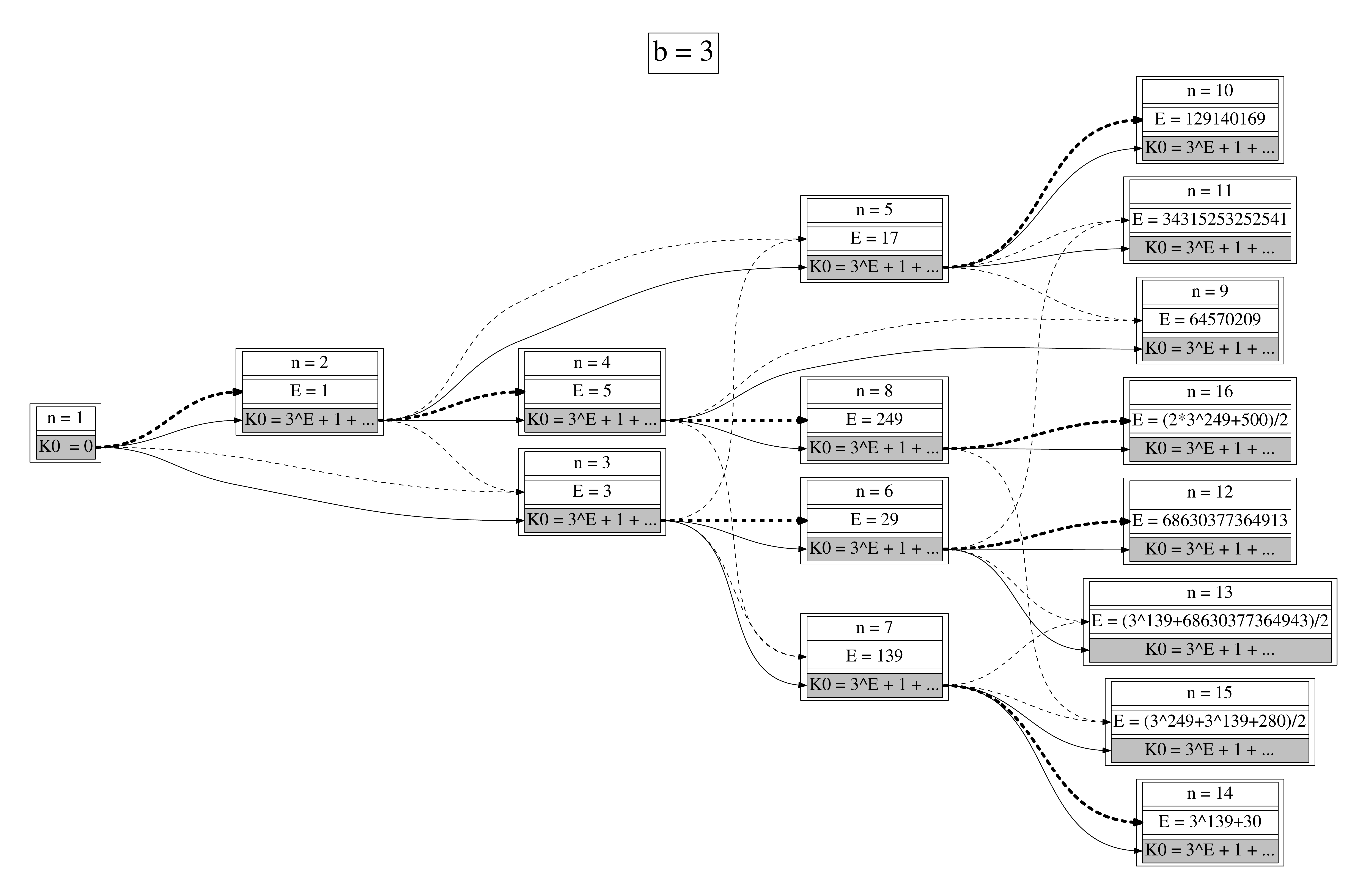}
  \caption{Flow-chart illustrating the calculation of $\B(n)$, $\K_i(n)$, and
  $\K(n)$ for $n\leq 16$ in base $b=3$. 
  Note that apart from the labels, this is the same flow-chart as in Fig.~\ref{fig:b2}.}
  \label{fig:b3}
 \end{figure}

 \begin{figure}
  \centering
  \includegraphics[width=\textwidth]{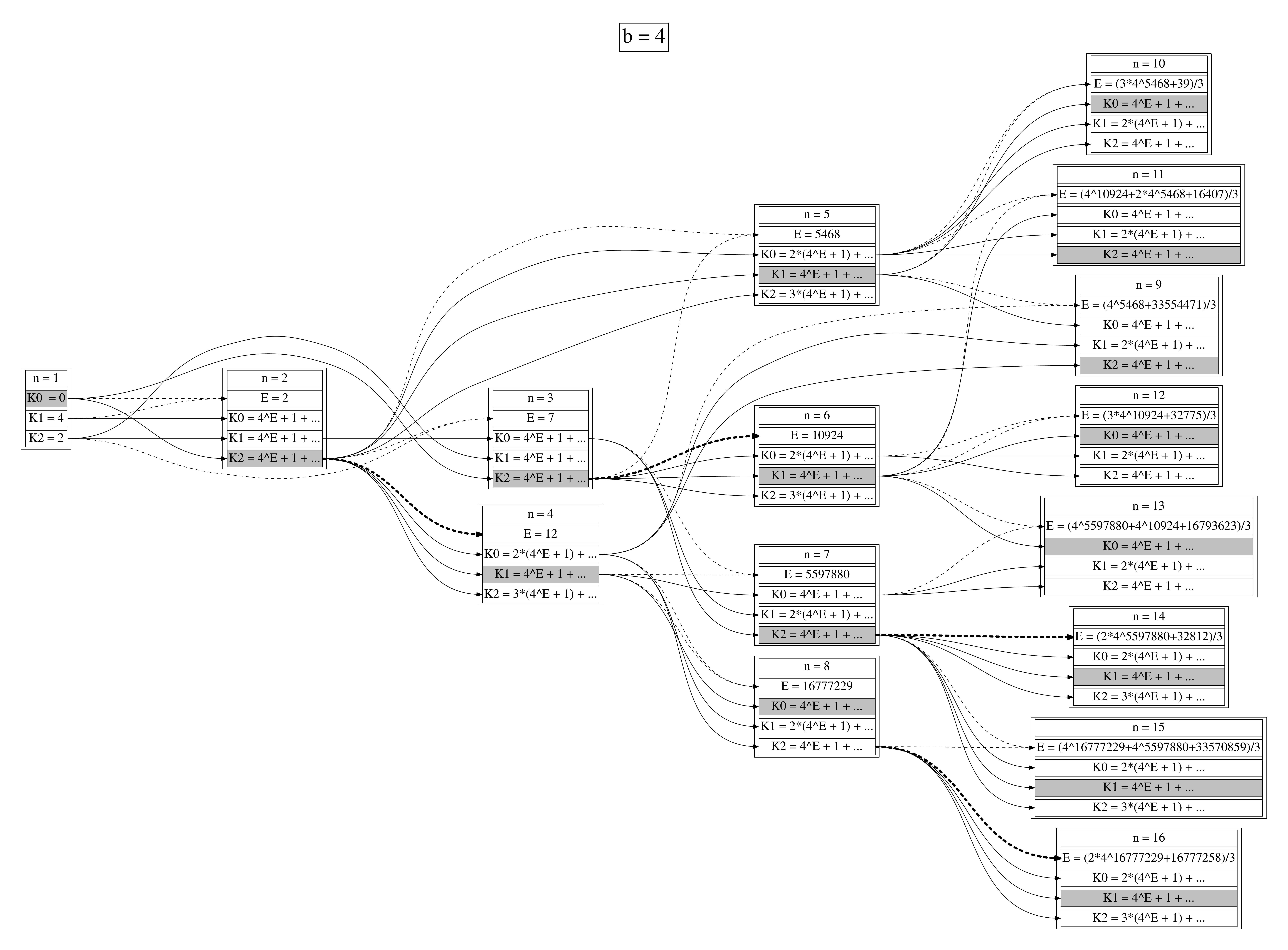}
  \caption{Flow-chart illustrating the calculation of $\B(n)$, $\K_i(n)$, and $\K(n)$ for $n\leq 16$ in base $b=4$.}
  \label{fig:b4}
 \end{figure}

 \begin{figure}
  \centering
  \includegraphics[width=\textwidth]{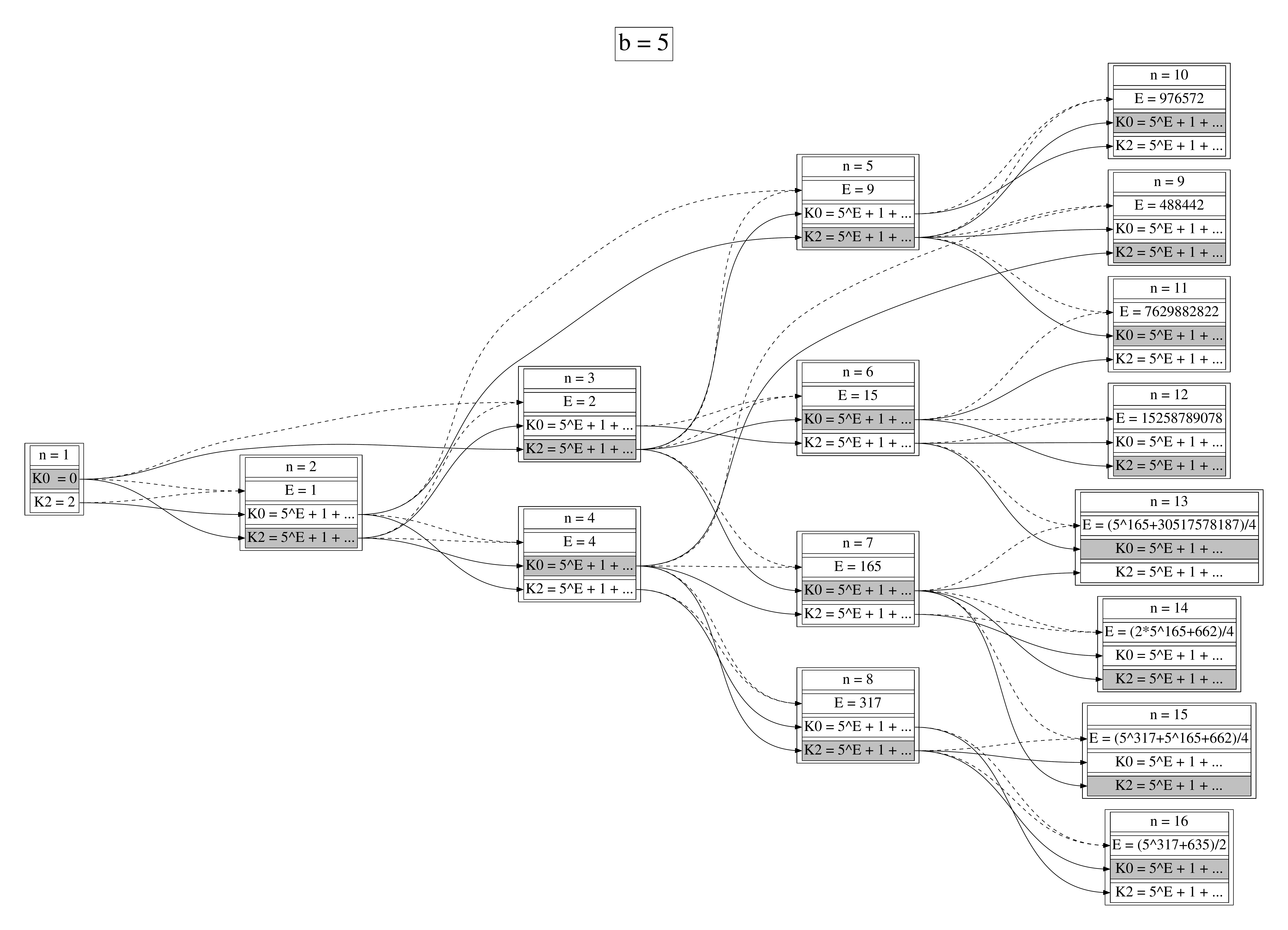}
  \caption{Flow-chart illustrating the calculation of $\B(n)$, $\K_i(n)$, and $\K(n)$ for $n\leq 16$ in base $b=5$.}
  \label{fig:b5}
 \end{figure}

 \begin{figure}
  \centering
 \includegraphics[width=\textwidth]{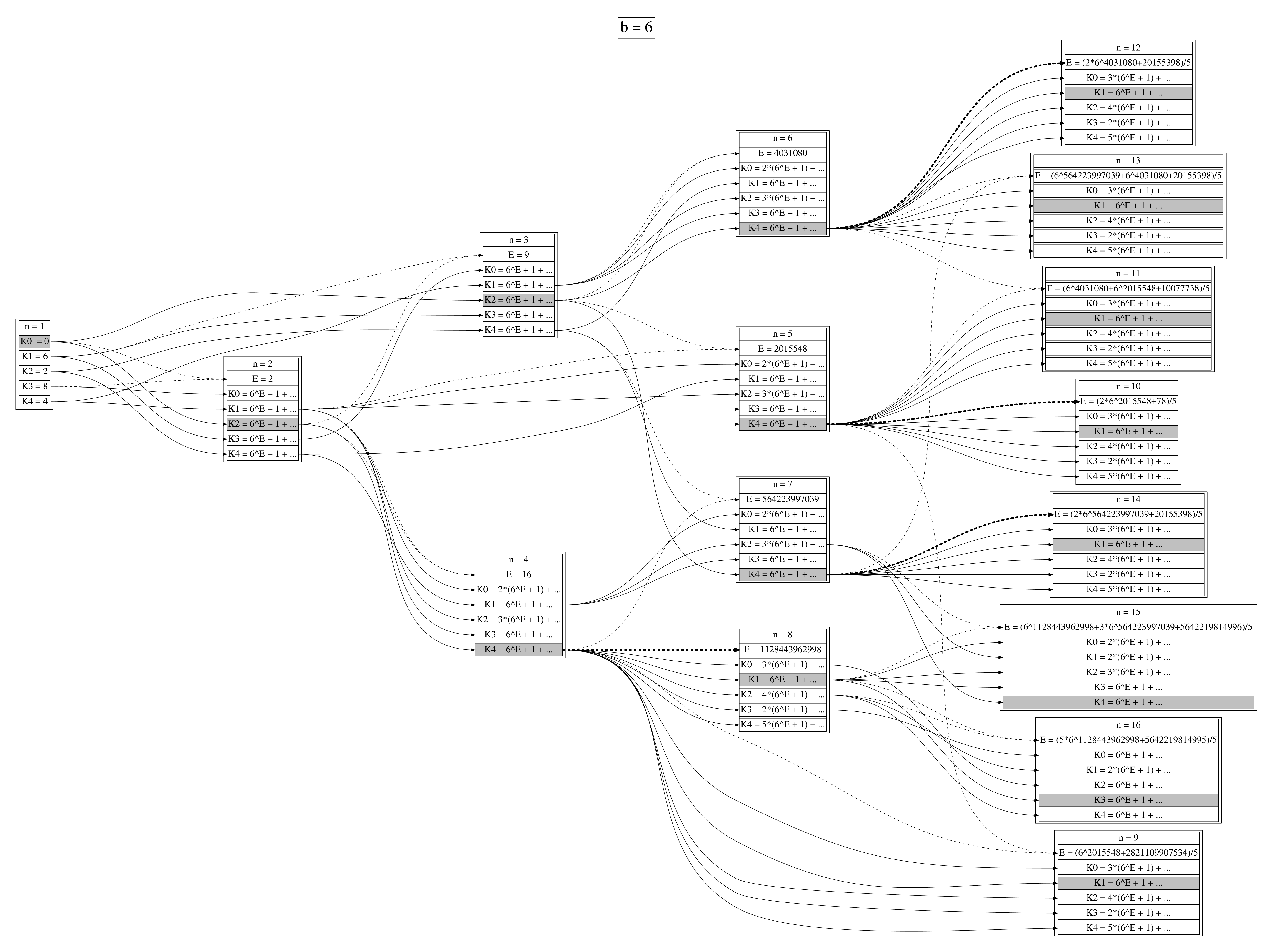}
  \caption{Flow-chart illustrating the calculation of $\B(n)$, $\K_i(n)$, and $\K(n)$ for $n\leq 16$ in base $b=6$. }
  \label{fig:b6}
 \end{figure}

 \begin{figure}
  \centering
  \includegraphics[width=\textwidth]{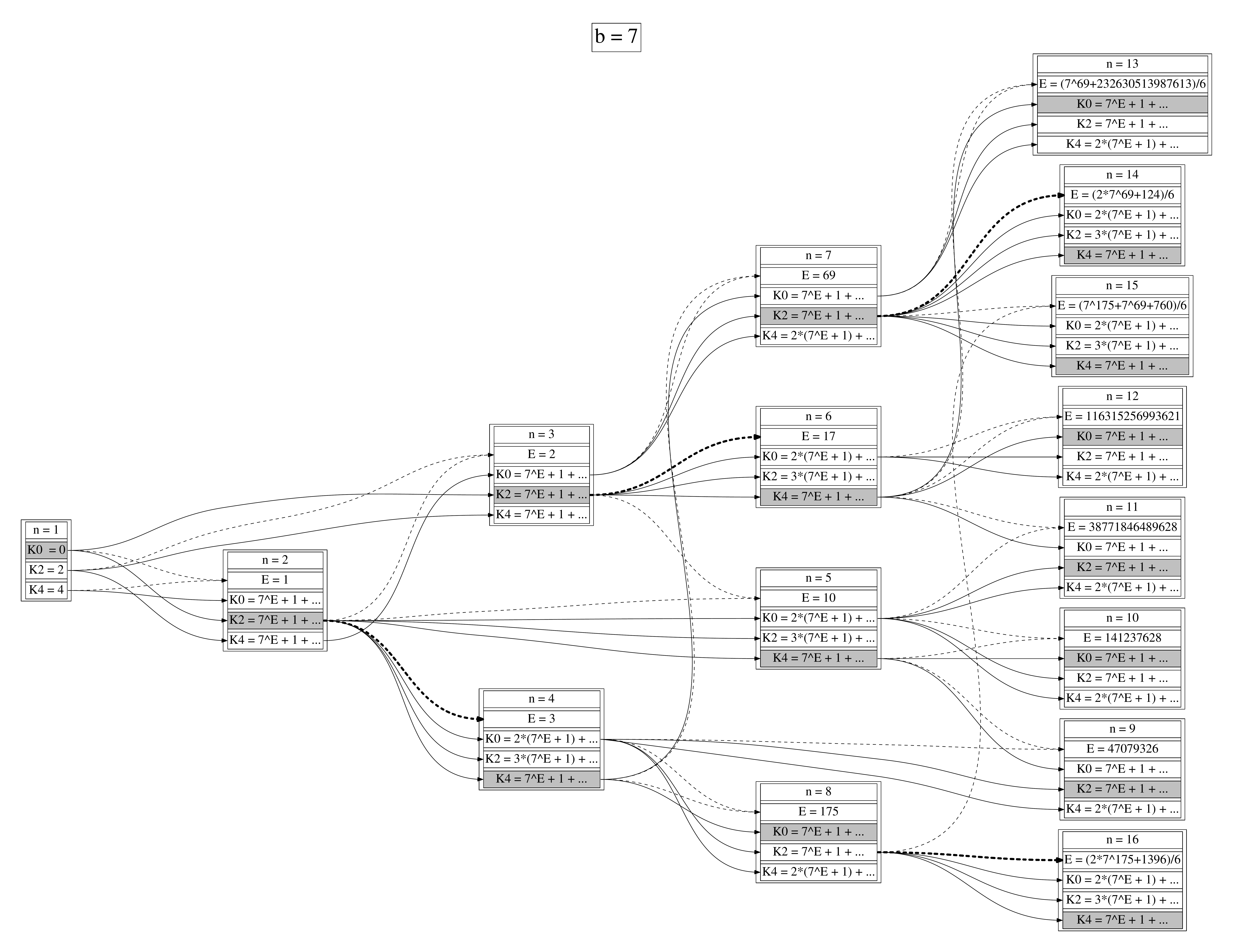}
  \caption{Flow-chart illustrating the calculation of $\B(n)$, $\K_i(n)$, and $\K(n)$ 
  for $n\leq 16$ in base $b=7$. 
  Note that apart from the labels, this is the same flow-chart as in Fig.~\ref{fig:b4}.}
  \label{fig:b7}
 \end{figure}

 \begin{figure}
  \centering
  \includegraphics[width=\textwidth]{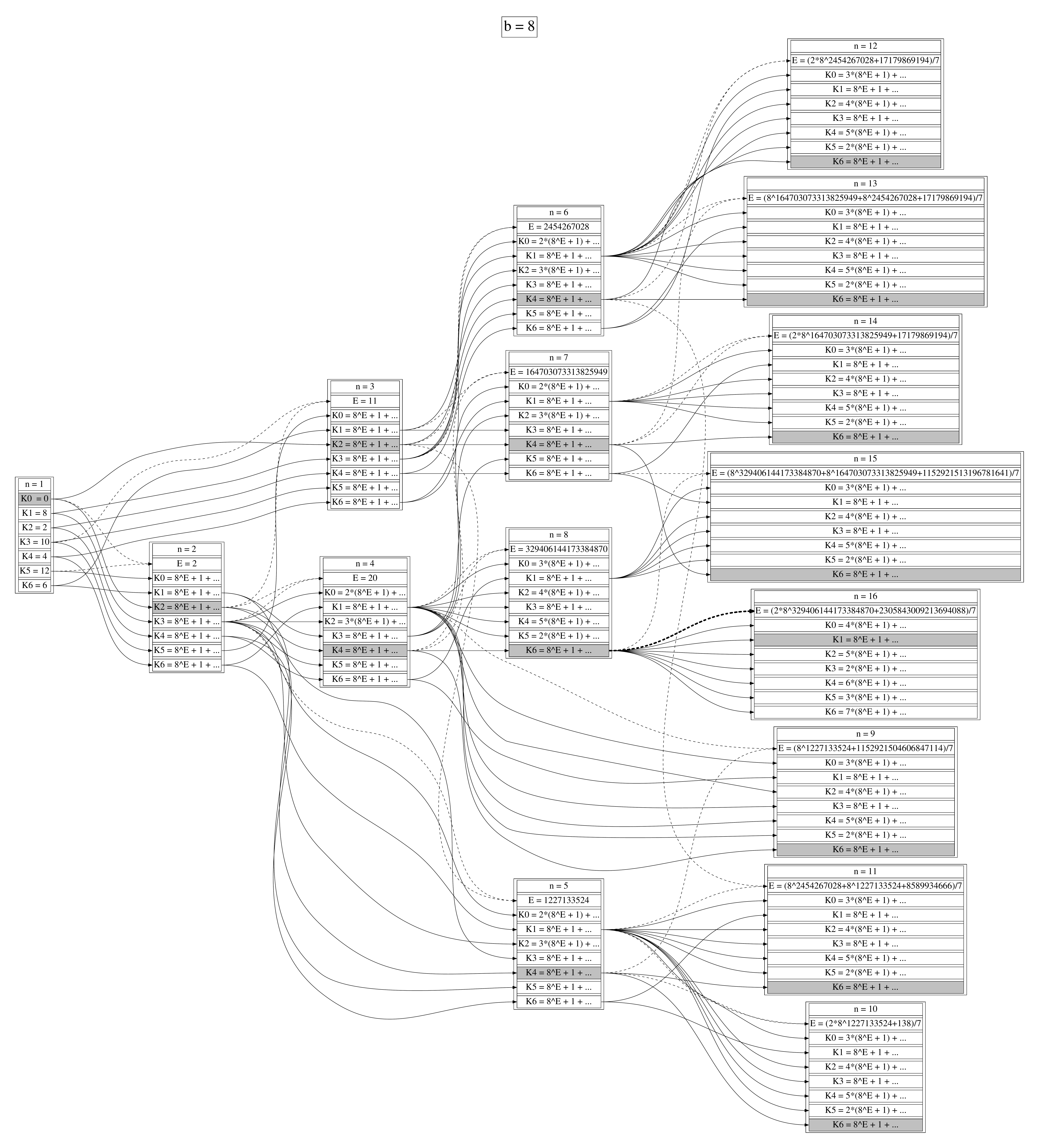}
  \caption{Flow-chart illustrating the calculation of $\B(n)$, $\K_i(n)$, and $\K(n)$ for $n\leq 16$ in base $b=8$. }
  \label{fig:b8}
 \end{figure}

 \begin{figure}
  \centering
  \includegraphics[width=\textwidth]{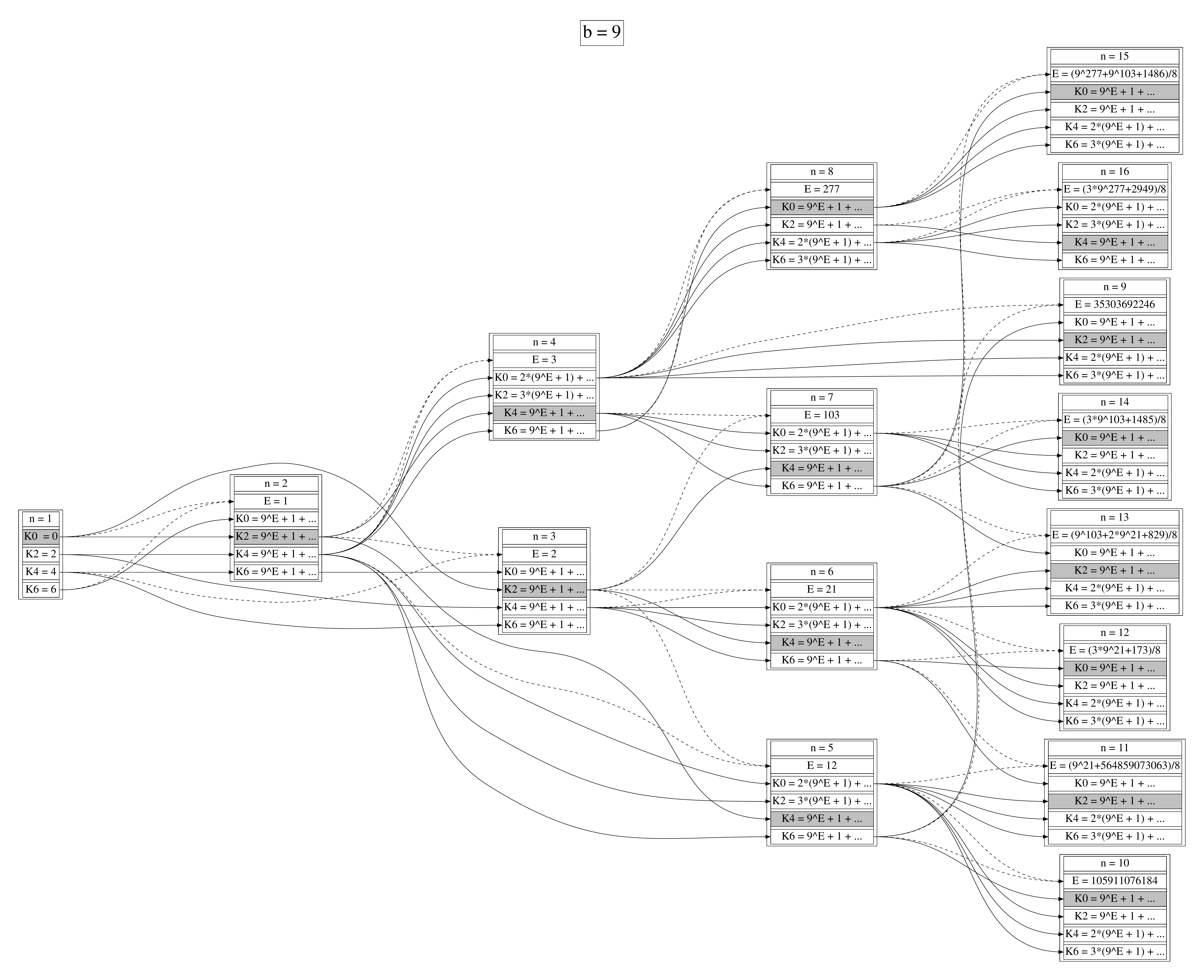}
  \caption{Flow-chart illustrating the calculation of $\B(n)$, $\K_i(n)$, and $\K(n)$ for $n\leq 16$ in base $b=9$. }
  \label{fig:b9}
 \end{figure}

 \begin{figure}
  \centering
  \includegraphics[width=\textwidth]{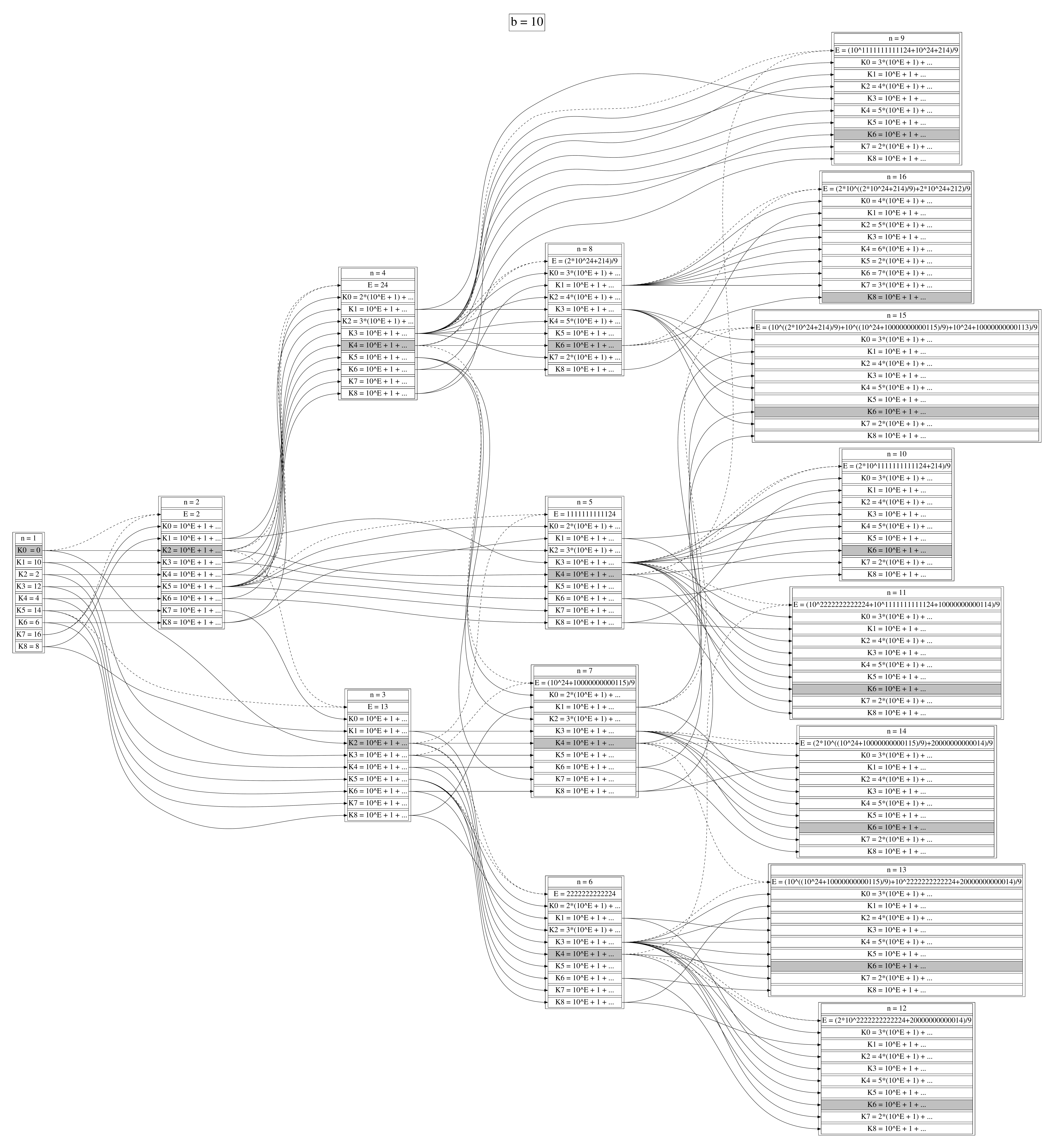}
  \caption{Flow-chart illustrating the calculation of $\B(n)$, $\K_i(n)$, and $\K(n)$ for $n\leq 16$ in base $b=10$. }
  \label{fig:b10}
 \end{figure}

\end{document}